\let\phi\varphi
\let\kappa\varkappa
\let\le\leqslant
\let\ge\geqslant
\let\emptyset\varnothing
\let\ES\varnothing
\newcommand\Int{\mathop{\mathrm{Int}}}
\newcommand\Cl{\mathop{\mathrm{Cl}}}
\newcommand{\sub}{\mathop{\mathrm{sub}}}
\newcommand{\pr}{\mathop{\mathrm{pr}}\nolimits}
\newcommand{\diam}{\mathop{\mathrm{diam}}}
\newcommand{\uni}[1]{\mathbf{1}_{{#1}}}
\newcommand\subcl{\mathrel{\underset{\mathrm{cl}}{\subset}}}
\newcommand\subop{\mathrel{\underset{\mathrm{op}}{\subset}}}
\newcommand\ups{{\uparrow}}
\newcommand\dns{{\downarrow}}
\newcommand{\Comp}{\mathcal{C}\mathrm{omp}}
\newcommand{\BBN}{\mathbb{N}}
\newcommand{\BBR}{\mathbb{R}}
\newcommand{\CCA}{\mathcal{A}}
\newcommand{\CCB}{\mathcal{B}}
\newcommand{\CCC}{\mathcal{C}}
\newcommand{\CCF}{\mathcal{F}}
\newcommand{\CCR}{\mathcal{R}}
\newcommand{\CCV}{\mathcal{V}}
\newcommand\Camb{\mathcal{CA}\mathrm{mb}}
\newcommand\Csamb{\mathcal{CSA}\mathrm{mb}}
\newcommand\Cpamb{\mathcal{CPA}\mathrm{mb}}
\newcommand\Cpsamb{\mathcal{CPSA}\mathrm{mb}}
\newcommand\Coamb{\mathcal{COA}\mathrm{mb}}
\let\sms\smallsmile
\let\ccirc\circledcirc
\def\adot{\mathbin{\underset{\raisebox{.25ex}{\rm *}}{\odot}}}
\def\acirc{\mathbin{\underset{\raisebox{.2ex}{\rm *}}{\circledcirc}}}
\def\bcirc{\mathbin{\bar\circledcirc}}
\def\bacirc{\mathbin{\underset{\raisebox{.2ex}{\rm *}}{\bar\circledcirc}}}
\def\ssms{{%
\rlap{$\scriptstyle\smallsmile$}%
\raise .5ex \hbox{$\scriptstyle\smallsmile$}%
}}
\newtheorem{theo}{Theorem}[section]
\newtheorem*{theo*}{Theorem}
\newtheorem{lem}[theo]{Lemma}
\newtheorem{stat}[theo]{Proposition}
\newtheorem{cons}[theo]{Corollary}
\theoremstyle{definition}
\newtheorem{defn}[theo]{Definition}
\newtheorem*{defn*}{Definition}
\newtheorem*{rem*}{Remark}
\newtheorem{exam}[theo]{Example}
\newtheorem*{exam*}{Example}
\newtheorem{que}[theo]{Question}
\newtheorem*{que*}{Question}
\journal{Fuzzy Sets and Systems}
\begin{document}

\begin{frontmatter}




\title{Ambiguous representations as fuzzy relations between sets}

\tnotetext[supp]{This research was supported by the Slovenian Research
Agency grants P1-0292-0101, J1-2057-0101 and BI-UA/09-10-002, and
the Ministry of Science and Education of Ukraine project
M/95-2009.}


\author[nyk]{Oleh Nykyforchyn\corref{cor1}}
\ead{oleh.nyk@gmail.com}
\author[rep]{Du\v san Repov\v s}
\ead{dusan.repovs@guest.arnes.si}
\cortext[cor1]{}

\address[nyk]{Vasyl' Stefanyk Precarpathian National University,
Shevchenka 57, Ivano-Frankivsk, 76025, Ukraine}
\address[rep]{Faculty of Mathematics and Physics and Faculty of Education,
University of Ljubljana, Jadranska 19, Ljubljana, 1000, Slovenia}

\begin{abstract}
Crisp and $L$-fuzzy ambiguous representations of closed subsets of
one space by closed subsets of another space are introduced. It is
shown that, for each~pair of compact Hausdorff spaces, the set of
(crisp or $L$-fuzzy) ambiguous representations is a lattice and a
compact Hausdorff Lawson upper semilattice. The categories of
ambiguous and $L$-ambiguous representations are defined and
investigated.
\end{abstract}

\begin{keyword}
fuzzy relation \sep category theory \sep compactum \sep
non-additive measure \sep Lawson lattice.
\MSC[2010]
18B10 
\sep
18B30
\sep
54B20 
\sep
94D05 
\end{keyword}

\end{frontmatter}


\section*{Introduction}

The necessity of modeling various kinds of uncertainty, imprecision
and incompleteness of information has resulted in a variety of
theories which in most cases either can be reduced to two main
ideas --- fuzziness and roughness, or they combine the two in
different ways.

A set is said to be fuzzy if, for an arbitrary element, its
membership can be not only completely true or completely false, but
also intermediate membership grades can occur. This level of
membership can be expressed as a number in the range $[0,1]$
(classical fuzzy sets \cite{Zadeh:FuzzySets:65}), as a subinterval
of $[0,1]$ (interval based fuzzy sets
\cite{GehWalWal:CommIntBasFuzSets:96}), as a pair of two numbers
with the sum $\le 1$ that indicate our confidence in its membership
and non-membership (vague sets, \cite{GauBue:VagueSets:93}), as a
mapping from $[0,1]$ to $[0,1]$ (type 2 fuzzy sets,
\cite{Zadeh:ConcLingVar:75}), as an element of a lattice ($L$-fuzzy
sets, \cite{Goguen:LFuzzySets:67}), etc. A~membership grade of $x$
in $F$ can be interpreted in different
ways~\cite{DubPr:ThreeSemFuzzSets:97}, e.g.\ as proximity of $x$ to
``prototype elements'' of $F$, as plausibility, certainty or truth
degree of ``$x$ is in $F$'', as ease (``cost'') of making $x$ to
``fit'' into $F$ etc. We shall neither discuss nor compare
different semantics of fuzzy sets, but accept a~convention, which
is compatible with all of them: ``the~more'' a~membership degree
is, ``the~better'' a~respective element fits into a~considered
class of objects. Hence in the~sequel ``truth value'' of a~sentence
can be understood not necessarily in the~strict sense of
multivalued logic, but also as degree of acceptability or
certainty etc.

A set is said to be rough if it is contained in a universe where one
sometimes cannot distinguish one element from another, which is
formalized via partitions or equivalence relations
\cite{Pawl:RoughSets:82}. In a rough set some elements are definitely
contained, some are decidedly not members, whereas for
some elements the answer is uncertain.

Nevertheless, these concepts do not completely cover uncertainty and
imprecision in the~description of \emph{sets}. The existing tools
consider it to be a derivative of uncertainty and imprecision in
description\-/recognition\-/membership of individual elements
\cite{Li:RouApprOper:08,Lin:NeighSys:97}, which is insufficient.
Let us imagine taking a digital photo of a text when the camera is
subject to random small shifts. Then the image of a character is
not uniquely determined and in order to recognize it, we cannot
compare a region of a photo with a pattern on the per-pixel basis.
Moreover, if all pixels of the candidate image are obtained from
the pattern by a shift of 1 pixel to the left, the result is much
more acceptable than if all pixels of the pattern are moved by 1
pixel in random directions, although the two possibilities are
equivalent from the "elementwise" point of view.

Even though fuzzy mathematics deals with sets, e.g. when fuzzy
variants of subsetness, similarity and distance between sets are
investigated \cite{KehKon:LFuzzyIncSimDist:03}, mostly (with rare
exceptions~\cite{Li:RouApprOper:08}) relations between subsets of
\emph{the same} universe are considered. We believe that a
fundamental distinction between an object and its observable image,
e.g. between a 3-D body and its 2-D photo, has to be reflected in
an adequate theory.

In is also important that fuzzy/rough theoretical investigations
consider continuity and other topological properties of the
procedures suggested to process uncertain and imprecise data
(probably with only finite sets in mind). Ignoring this is rather
risky because small inaccuracy can cause incorrect conclusions.

In this paper we propose a notion of a~\emph{(fuzzy) ambiguous
representation} of subsets of one universe by subsets of another
one. Sets under consideration are
closed subsets of compact Hausdorff spaces, which in most cases is
sufficient for applications, e.g.\ all closed bounded sets of
$\BBR^n$ fit into this case. Recall also that each finite set
can be regarded as a compact Hausdorff space with the discrete
topology. Hence, if the reader wants to quickly gain an idea
about the~introduced objects, he/she can apply the following to finite
sets only (and to \emph{all} their subsets) and skip all
topological issues. On the~other hand, although it is desirable to
extend our results e.g.\ to Tychonoff spaces or complete
metric spaces (and we will do this in the future), this
extension raises many complications, cf.~\cite{NykRep:IncHypCapTych:09}
on a~similar problem for inclusion hyperspaces and capacities,
which are "building blocks" for ambiguous representations.

All compact Hausdorff
spaces and (fuzzy) ambiguous representations are arranged into
categories, thus allowing one to compose representations and (in
some cases) to find representations that are inverse to a given one
(in a special sense). It is shown that the set of "good" (fuzzy)
ambiguous representations between fixed compact Hausdorff spaces is
a lattice and a compact Hausdorff space as well.

The paper is organized as follows. First, all necessary definitions
and facts (or references to sources) are provided in Section~1.
This is followed by a strict mathematical exposition of (crisp)
ambiguous representations in Section~2 and $L$-fuzzy ambiguous
representations in Section~3. In Section~4 we discuss
possible interpretations and applications.

\section{Preliminaries}

In the sequel a \emph{binary (ternary) relation} means an arbitrary
subset of the product of two (resp.\ three) sets. If these sets are
topological spaces, we call a relation \emph{closed} if it is a
closed set in the product topology. For a binary relation $R\subset
X\times Y$ and elements $a\in X$ and $b\in Y$ we denote $aR=\{y\in
Y\mid (a,y)\in R\}$, $Rb=\{x\in X\mid (x,b)\in R\}$.

For relations $R\subset X\times Y$ and $S\subset Y\times Z$, the
composition of $R$ and $S$ is defined in the usual way, i.e.\ as
$$
\{(x,z)\in X\times Z\mid\text{there is }y\in Y
\text{ such that }(x,y)\in R,(y,z)\in S\}.
$$
The~obtained relation is often denoted by $R\circ S$ (cf.\ e.g.\
\cite{Winter:CatApprToLFuzzyRel:07}), but this contradicts to
the~notation for compositions of mappings. If $R\subset X\times Y$
is such that, for each $x\in X$, there is a~unique $y\in Y$ such
that $(x,y)\in R$, then $R$ is a~mapping $X\to Y$, and
the~mentioned $y$ is regarded as the~value $R(x)$. If $S\subset
Y\times Z$ is also a~mapping $Y\to Z$, then the~composition of
$R:X\to Y$ and $S:Y\to Z$ is a~mapping $X\to Z$, which is usually
denoted by $S\circ R$. To avoid confusion, we denote
the~composition of relations $R\subset X\times Y$ and $S\subset
Y\times Z$ by $R\ccirc S$ (or by other similar symbols with extra
circles), hence $R\ccirc S=S\circ R$ for mappings $R,S$.

Let $L$ be a~complete distributive lattice with a~bottom element
$0$ and a~top element $1$. An~$L$-\emph{fuzzy set} $F$ in
a~universe $X$ is a~mapping $F:X\to L$, with $F(x)$ being
interpreted as the truth degree of the~fact $x\in F$. Similarly,
an~$L$-\emph{fuzzy binary relation} $R$ between elements of
universes $X$ and $Y$ is a~mapping $R:X\times Y\to L$,
cf.~\cite{Winter:CatApprToLFuzzyRel:07}; we use either $xRy$ or
$R(x,y)$ to denote the truth degree of the sentence ``$x$ and $y$
are related by $R$''. If $F(x)$ (or $R(x,y)$) takes only values $0$
and $1$, then the~respective set (or the~relation) is called
\emph{crisp} and is identified with the~(usual) set $\{x\in X\mid A(x)=1\}$
(resp. with the~binary relation $\{(x,y)\in X\times Y\mid
R(x,y)=1\}$).

For an~$L$-fuzzy set $F:X\to L$ and $\alpha\in L$, the~(strong)
$\alpha$-\emph{cut}~\cite{TeKiMi:FuzzySysAndApps:92} of $F$ is
the~set $F_\alpha=\{x\in X\mid F(x)\ge \alpha\}$, which is
identified with the~crisp set
$$
F_\alpha(x)=
\begin{cases}
1,F(x)\ge \alpha,\\0,F(x)\not\ge\alpha,
\end{cases}
\quad
x\in X.
$$
Similarly, for an~$L$-fuzzy binary relation $R:X\times Y\to L$ and
$\alpha\in L$, the~$\alpha$-\emph{cut} of
$R$~\cite{Winter:CatApprToLFuzzyRel:07} is defined as the~crisp
relation:
$$
R_\alpha(x,y)=
\begin{cases}
1,R(x,y)\ge \alpha,\\0,R(x,y)\not\ge\alpha,
\end{cases}
\quad
x\in X,y\in Y.
$$
By the~above, from now on we identify $R_\alpha$ with the~binary
relation $\{(x,y)\in X\times Y\mid R(x,y)\ge\alpha\}$.

It is obvious that the~family $(F_\alpha)_{\alpha\in L}$ of
$\alpha$-cuts (so called $L$-\emph{flou
set}~\cite{NegRal:RepTheo:75}) determines an~$L$-fuzzy set $F:X\to
L$ completely, as well as the~family $(R_\alpha)_{\alpha\in L}$
completely determines an~$L$-relation $R:X\times Y\to L$. It is
natural to ``collect'' these families into single subsets of
$X\times L$ and $X\times Y\times L$, respectively. Hence we
identify each $L$-fuzzy subset of $X$ and each $L$-relation between
elements of $X$ and $Y$ with their
\emph{subgraphs} (or \emph{hypographs})
$$
\sub F=\{(x,\alpha)\in X\times L\mid \alpha\le F(x)\}
$$
and
$$
\sub R=\{(x,y,\alpha)\in X\times Y\times L\mid \alpha\le R(x,y)\}
$$
respectively.

A~triple $(x,y,\alpha)$ is in $\sub R$ if and only if the~truth
degree of the sentence ``$x,y$ are related by $R$'' is at
least~$\alpha$. For a~set $S\subset X\times Y\times L$ to be
a~subgraph of an~$L$-relation, necessary and sufficient conditions
are:

(1) $S\supset X\times Y\times \{0\}$;

(2) for $x\in X$, $y\in Y$, and $A\subset L$ such that
$(x,y,\alpha)\in S$ for all $\alpha\in A$, the~triple $(x,y,\sup
A)$ is also in $S$.

An~obvious similar condition is valid also for subgraphs of
$L$-fuzzy sets.

To define compositions of $L$-fuzzy relations, we
follow~\cite[Section~3.3]{Winter:CatApprToLFuzzyRel:07} and require
that $L$ is a~complete lattice, an~operation $*:L\times L\to L$ is
associative, commutative, infinitely distributive w.r.t.\
``$\lor$'' in the both arguments, and $1$ is a neutral element for
"$*$" (i.e.\ $(L,*,0,1)$ is a~\emph{commutative lattice-ordered
semigroup} in the~terminology of
\cite{Winter:CatApprToLFuzzyRel:07}). Then, for fuzzy relations
$R:X\times Y\to L$ and $S:Y\times Z\to L$, the~\emph{composition}
$R\acirc S:X\times Z\to L$ is defined by the~formula
$$
R\acirc Q(x,z)=\sup\{R(x,y) *R(y,z)\mid y\in Y\},\; x\in X,z\in Z.
$$
Therefore
\begin{multline*}
\sub(R\acirc Q)=
\bigl\{(x,z,\alpha)\in X\times Z\times L
\mid
\alpha\le\sup
\{\beta *\gamma\mid
\\
\text{there is }y\in Y
\text{ such that }(x,y,\beta)\in \sub R,(y,z,\gamma)\in\sub Q
\}
\bigr\}.
\end{multline*}

>From now on we shall often treat $L$-fuzzy sets and $L$-relations
as subgraphs and write $F$ and $R$ instead of $\sub F$ and $\sub
R$. The~latter formula will be considered as the~definition of
composition.

We write $A\subop X$ (or $A\subcl X$) if $A$ is an open (resp.
closed) set in a topological space $X$. A~\emph{compactum} is a
(not necessarily metrizable) compact Hausdorff space.
The~\emph{category of compacta} $\Comp$ consists of all compacta
and continuous mappings between them (cf. \cite{ML:CWM:98} for
definitions of a category and a functor). For a compactum $X$, its
\emph{hyperspace} $\exp X$ consists of all non-empty closed subsets
of~$X$. We will use the (de-facto) standard
\emph{Vietoris topology}~\cite{Mich:TopOnSpSubsets:51} on $\exp X$
with a base that consists of all the sets of the form
$$
\langle U_1,\dots,U_n\rangle=
\{F\in\exp X\mid F\subset U_1\cup\dots\cup U_n,
F\cap U_i\ne\ES, i=1,\dots,n\},
$$
with $n\in\BBN$, $U_1,\dots,U_n\subop X$. The space $\exp X$ is a
compactum as well, hence we can write $\exp^2X=\exp(\exp X)$ etc.
The Vietoris topology is the least upper bound of the~\emph{upper
topology} with a subbase $\{\langle U\rangle\mid U\subop X\}$, and
the \emph{lower topology} with a subbase $\{\langle X,U\rangle\mid
U\subop X\}$. A continuous mapping into $\exp X$ with the upper
(lower) topology is called
\emph{upper} (resp.\ \emph{lower}) \emph{semicontinuous}.

If a mapping $f:X\to Y$ of compacta is continuous, then the mapping
$\exp f:\exp X\to\exp Y$, $\exp f(F)=\{f(x)\mid x\in F\}$ for all
$F\in\exp X$, is well defined and continuous. Thus the
\emph{hyperspace functor} $\exp$ in $\Comp$ is obtained.

A closed non-empty subset $\CCA\subset\exp X$ is called
an~\emph{inclusion hyperspace}~\cite{Mo:InclHyp:88} if for $A,B\in
\exp X$, the inclusion $B\supset A\in\CCA$ implies $B\in\CCA$. The
set $GX$ of all inclusion hyperspaces is closed in $\exp^2X$,
therefore with the induced topology it is a compactum. This topology
is determined with a subbase that consists of all the sets of the
form
$$
U^+=\{\CCA\in GX\mid \text{there is }A\in\CCA\text{ such that
}A\subset U\}
$$
and
$$
U^-=\{\CCA\in GX\mid A\cap U\ne\ES\text{ for all }A\in\CCA\},
$$
for all open $U\subset X$.

For any subset $\CCA\subset \exp X$ its
\emph{traversal} $\CCA^\perp=\{B\in\exp X\mid B\cap A\ne\ES
\text{ for all }A\in\CCA\}$ is an inclusion hyperspace, and the
correspondence $\CCA\mapsto \CCA^\perp$ is continuous and antitone
(with respect to inclusion). If $\CCA\subset\exp X$ contains all
closed supersets of its elements, then $(\CCA^\perp)^\perp =
\Cl\CCA$, hence $(\CCA^\perp)^\perp =\CCA$ if and only if $\CCA\in
GX$.

A topological upper (lower) semilattice is called
\emph{Lawson}~\cite{Laws:TopLatSmSl:69} if at each point it
possesses a local base consisting of upper (resp.\ lower)
subsemilattices. If $L$ is compact and Hausdorff, then this implies
that for each $F\subcl L$, $F\ne\ES$, the least upper (resp.
greatest lower) bound of $F$ exists and it continuously depends on
$F$. A \emph{Lawson lattice} is a distributive topological lattice
that is both an upper and a lower Lawson semilattice. In the sequel
all topological (semi)lattices will be considered Hausdorff. The
bottom and the top elements of a poset (if they exist) are denoted
by $0$ and $1$, respectively. By $\lor$ and $\land$ we denote resp.\
pairwise joins and meets. For a subset $A$ of a poset $L$, we
denote:
$$
A\dns=\{\beta\in L\mid\beta\le\alpha\text{ for some }\alpha\in A\},
\quad
A\ups=\{\beta\in L\mid\alpha\le\beta\text{ for some }\alpha\in A\}.
$$
For elements $\alpha,\beta$ of a poset $L$, we write
$\alpha\ll\beta$ and say that $\alpha$ is \emph{way below} $\beta$
if, for each directed set $D\subset L$ such that $\beta\le\sup D$,
there is an~element $\gamma\in D$ such that $\alpha\le\gamma$. If
$L$ is a~compact Lawson lattice, then this is equivalent to
$\beta\in\Int(\{\alpha\}\ups)$, hence to the~existence of
a~neighborhood $O_\beta\ni\beta$ such that $\alpha\le\inf O_\beta$.
The following statement is immediate:

\begin{lem}\label{lem.ldot}
Let $*:L\times L\to L$ be a continuous operation that is monotone
and satisfies infinite distributive laws w.r.t. $\inf$ in the both
arguments. For $\alpha,\beta,\gamma\in L$, if $\gamma\ll\alpha
*\beta$, then $\gamma\le\alpha'*\beta'$ for some
$\alpha',\beta'\in L$ such that $\alpha'\ll\alpha$,
$\beta'\ll\beta$.
\end{lem}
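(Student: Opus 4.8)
The plan is to use the Lawson-lattice characterization of the way-below relation recalled just above the statement: $\gamma\ll\alpha*\beta$ is equivalent to the existence of a neighborhood $O$ of $\alpha*\beta$ with $\gamma\le\inf O$. First I would exploit continuity of $*$ to pull this neighborhood back: since $*\colon L\times L\to L$ is continuous, there are neighborhoods $U\ni\alpha$ and $V\ni\beta$ with $U*V\subset O$, so that $\gamma\le\inf(U*V)$. Because $*$ is monotone, $\inf(U*V)=\inf U*\inf V$ would be too much to hope for directly, but monotonicity does give $\inf(U*V)\ge (\inf U)*(\inf V)$ is false in general; instead I would argue that for every $u\in U$, $v\in V$ we have $\gamma\le u*v$, and then take infima carefully.

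The cleaner route is this: by the characterization of $\ll$ in a compact Lawson lattice, pick neighborhoods $O_\alpha\ni\alpha$ with $\alpha'':=\inf O_\alpha$ and note $\alpha'\ll\alpha$ for any $\alpha'\le\alpha''$ with $\alpha'$ way below — actually I want to produce $\alpha'\ll\alpha$ and $\beta'\ll\beta$ directly. So I would instead do the following. Using continuity of $*$, choose open $U\ni\alpha$, $V\ni\beta$ with $U*V\subset O$ where $O$ is a neighborhood of $\alpha*\beta$ witnessing $\gamma\le\inf O$. In a compact Lawson lattice one can shrink $U$ and $V$ to open upper (or suitable) neighborhoods so that $\inf U\ll\alpha$ and $\inf V\ll\beta$: indeed, by the local base of subsemilattices and the equivalence $\delta\ll\alpha\iff\alpha\in\Int(\{\delta\}\ups)$, the element $\alpha$ has arbitrarily small neighborhoods $U$ with $\inf U\ll\alpha$ (take $U$ inside $\Int(\{\delta\}\ups)$ for suitable $\delta\ll\alpha$, which exist and are directed with supremum $\alpha$ by continuity of the lattice). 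Set $\alpha'=\inf U$ and $\beta'=\inf V$; then $\alpha'\ll\alpha$, $\beta'\ll\beta$.

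It remains to check $\gamma\le\alpha'*\beta'$, and this is where the infinite distributivity of $*$ w.r.t.\ $\inf$ enters — the one genuinely non-formal step. We have $\gamma\le\inf\{u*v\mid u\in U,\ v\in V\}$ since $U*V\subset O$. Using distributivity of $*$ over $\inf$ in the first argument, $\inf_{u\in U}(u*v)=(\inf_{u\in U}u)*v=\alpha'*v$ for each fixed $v$; then using distributivity in the second argument, $\inf_{v\in V}(\alpha'*v)=\alpha'*(\inf_{v\in V}v)=\alpha'*\beta'$. Hence $\gamma\le\alpha'*\beta'$, as required. The main obstacle is the second paragraph: correctly extracting neighborhoods $U,V$ small enough that their infima are way below $\alpha,\beta$ respectively — this requires combining the Lawson local-base-of-subsemilattices property with the continuity of the lattice (so that $\{\delta:\delta\ll\alpha\}$ is directed with join $\alpha$) and then invoking continuity of $*$ to fit $U*V$ inside the given witnessing neighborhood $O$. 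Once the neighborhoods are in hand, the distributivity computation is routine.
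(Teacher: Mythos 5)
Your argument is correct, and it supplies exactly the proof the paper omits (the lemma is stated there as ``immediate''): use the compact-Lawson characterization $\gamma\ll\delta\iff\gamma\le\inf O_\delta$ for some neighborhood $O_\delta\ni\delta$, pull the witnessing neighborhood of $\alpha*\beta$ back through the continuous operation $*$ to neighborhoods $U\ni\alpha$, $V\ni\beta$, and finish with the infinite distributivity of $*$ over $\inf$ in each argument. The only remark worth making is that your second paragraph labors over a non-issue: for \emph{any} open neighborhood $U\ni\alpha$ one has $U\subset\{\inf U\}\ups$ and hence $\alpha\in\Int(\{\inf U\}\ups)$, so $\inf U\ll\alpha$ holds automatically by the very characterization you quoted; no appeal to continuity of the lattice or to shrinking $U$ inside sets of the form $\Int(\{\delta\}\ups)$ is needed.
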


For a compact Lawson lattice $L$ and a compactum $X$, a function
$c:\exp X\cup\{\emptyset\}\to L$ is called an $L$-\emph{valued
capacity}~\cite{Nyk:CapLat:08} (or $L$-\emph{fuzzy measure}) on a
compactum $X$ if the following holds:
\begin{enumerate}
\item
$c(\emptyset)=0$, $c(X)=1$;
\item
for each closed subsets $F$, $G$ in $X$ the inclusion $F\subset G$
implies $c(F)\le c(G)$ (monotonicity); and
\item
if $F\subset X$ is closed and $c(F)$ lies in a neighborhood
$V\subset L$, then there exists an open subset $U\supset F$ such
that $c(G)\in V\dns$ for any closed $G\subset X$ satisfying
$G\subset U$ (upper semicontinuity).
\end{enumerate}

Denote by $M_LX$ the set of all $L$-valued capacities on a
compactum~$X$. We define a topology on $M_LX$ by a subbase that
consists of all sets of the form
\begin{multline*}
O_+(U,V)=\{c\in M_LX\mid
\text{ there is }F\subcl U\text{ such that }c(F)\ge
\alpha\text{ for some }\alpha\in
V\}=\\
=
\{c\in M_LX\mid
\text{ there is }F\subcl U,c(F)\in V\ups\},
\end{multline*}
where $U\subop X$, $V\subop L$, and
$$
O_-(F,V)=\{c\in M_LX\mid c(F)\le \alpha\text{ for some }\alpha\in
V\}=
\{c\in M_LX\mid
c(F)\in V\dns\},
$$
where $F\subcl X$, $V\subop L$.

It was proved in \cite{Nyk:CapLat:08} that the defined topology on
$M_LX$ is compact Hausdorff. If we take a subbase that consists
only of all elements of the first (second) form, we obtain the
\emph{upper} (resp. \emph{lower}) \emph{topology} on $M_LX$. Upper
(lower) semicontinuous functions into $M_LX$ are defined in the
obvious way.

The \emph{subgraph}~\cite{Nyk:CapLat:08} (or \emph{hypograph}) of a
capacity $c\in M_LX$ is a set $\sub c=\{(F,\alpha)\mid\ F\in\exp
X,\alpha\in L,\alpha\le c(F)\}\subset\exp X\times L$.

\begin{lem}\cite{Nyk:CapLat:08}
Let $X$ be a compactum, and $L$ a compact Hausdorff upper
semilattice that contains the greatest element. A subset
$\CCF\subset\exp X\times L$ is the subgraph of an $L$-valued
capacity if and only if for all closed nonempty subsets $F$, $G$ of
$X$ and all $\alpha,\beta\in L$ the following conditions are
satisfied:

1)
if $(F,\alpha)\in \CCF$, $\alpha\ge \beta$, then $(G,\beta)\in
\CCF$;

2)
if $(F,\alpha),(G,\beta)\in \CCF$, then $(F\cup
G,\alpha\lor\beta)\in S$;

3)
$\CCF\supset \exp X\times\{0\}\cup \{X\}\times L$; and

4)
$\CCF$ is closed.

\noindent
If these conditions hold, then the capacity $c$ is unique (and we
denote it $c_{\CCF}$).
\end{lem}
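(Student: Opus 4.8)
\emph{Plan.} I would prove the two implications separately; necessity is routine, and the whole difficulty sits in the ``sufficiency'' direction, specifically in recovering upper semicontinuity.

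\emph{Necessity.} Suppose $\CCF=\sub c$ for an $L$-valued capacity $c$. Condition (1) is just downward closedness of $\sub c$ in the membership degree, condition (2) follows from monotonicity ($c(F)\vee c(G)\le c(F\cup G)$), and (3) from $0\le c(F)\le 1=c(X)$. For closedness (4) I would separate a point $(F,\alpha)\notin\sub c$, i.e.\ $\alpha\not\le c(F)$, from $\sub c$: since the order of the compact Hausdorff semilattice $L$ is closed in $L\times L$, there are open $W\ni c(F)$ and $V\ni\alpha$ with $\gamma\not\le\delta$ whenever $\delta\in W$, $\gamma\in V$; upper semicontinuity of $c$ gives an open $U\supset F$ with $c(G)\in W\dns$ for every $G\subcl U$; then the Vietoris-open set $\langle U\rangle\times V$ is a neighbourhood of $(F,\alpha)$ missing $\sub c$ (if $\gamma\in V$ and $\gamma\le c(G)\le\delta\in W$ we would get $\gamma\le\delta$).

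\emph{Sufficiency.} Given $\CCF$ satisfying (1)--(4), set $c_\CCF(\ES)=0$ and $c_\CCF(F)=\sup\{\alpha\in L\mid(F,\alpha)\in\CCF\}$ for $F\in\exp X$. First I would show this supremum is attained and that $\CCF=\sub c_\CCF$: for fixed $F$ the slice $A_F=\{\alpha\mid(F,\alpha)\in\CCF\}$ is closed in $L$ (by (4), intersecting $\CCF$ with the closed set $\{F\}\times L$) and upward directed (by (2), taking $F=G$ so that $\alpha,\beta\in A_F$ yields $\alpha\vee\beta\in A_F$); in a compact Hausdorff topological semilattice a monotone net converges to its least upper bound, so $\sup A_F\in A_F$, that is $(F,c_\CCF(F))\in\CCF$, and with (1) this gives $(F,\alpha)\in\CCF\iff\alpha\le c_\CCF(F)$. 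Normalization holds because (3) forces $\{X\}\times L\subset\CCF$, hence $c_\CCF(X)=\sup L=1$; monotonicity follows from (2) and (3): if $F\subset G$ then $(F,c_\CCF(F))\in\CCF$ and $(G,0)\in\CCF$, so $(G,c_\CCF(F))=(F\cup G,c_\CCF(F)\vee 0)\in\CCF$, i.e.\ $c_\CCF(F)\le c_\CCF(G)$. Uniqueness is immediate, since $\sub c$ determines $c$ via $c(F)=\sup\{\alpha\mid(F,\alpha)\in\sub c\}$.

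\emph{The main obstacle} is upper semicontinuity of $c_\CCF$. Fix $F\subcl X$ and open $V\ni c_\CCF(F)$, and suppose no admissible $U$ exists, so every open $U\supset F$ contains a nonempty $G_U\subcl U$ with $c_\CCF(G_U)\notin V\dns$. The point I would exploit is that $Z=\{G\in\exp X\mid c_\CCF(G)\notin V\dns\}$ is an \emph{upper} set for inclusion (if $G\subset G'$ then $c_\CCF(G)\le c_\CCF(G')$, and $V\dns$ is a down-set), and for an upper set the Vietoris closure coincides with the upper-Vietoris closure: given a basic Vietoris neighbourhood $\langle U_1,\dots,U_n\rangle$ of a point and a member of $Z$ contained in $\langle U_1\cup\dots\cup U_n\rangle$, one adjoins one point from each $U_i$ to land in $\langle U_1,\dots,U_n\rangle\cap Z$. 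Consequently $F$ lies in the \emph{Vietoris} closure of $Z$, so there is a net $G_d\to F$ in $\exp X$ with $c_\CCF(G_d)\notin V\dns$; passing to a subnet along which $c_\CCF(G_d)\to\gamma_0$ (compactness of $L$) and using (4) gives $(F,\gamma_0)\in\CCF$, hence $\gamma_0\le c_\CCF(F)$. Converting this into a contradiction with $c_\CCF(G_d)\notin V\dns$ is the delicate step: it requires using the closedness of the order on $L$ and, for a general $V$, first shrinking $V$ to a neighbourhood of $c_\CCF(F)$ whose down-set is suitably controlled (available for a compact Lawson lattice). I expect this last point to absorb the bulk of the argument, while everything preceding it is bookkeeping with the Vietoris and the order topologies.
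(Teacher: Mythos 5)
The paper does not actually prove this lemma: it is quoted, with a citation, from \cite{Nyk:CapLat:08}, so there is no in-paper argument to compare yours against and I can only judge the proposal on its own merits. On those merits it is essentially correct, and moreover the one step you flag as ``delicate'' and leave open --- turning $\gamma_0\le c_{\CCF}(F)$ against $c_{\CCF}(G_d)\notin V\dns$ --- is in fact immediate and needs neither the Lawson property nor any shrinking of $V$. The observation you are missing is that in a topological $\vee$-semilattice the down-set of an \emph{open} set is open: $\beta\in V\dns$ if and only if $\beta\vee\gamma\in V$ for some $\gamma\in L$ (take $\gamma$ to be the witness $\alpha\in V$ with $\beta\le\alpha$ for one direction, and use $\beta\le\beta\vee\gamma$ for the other), so $V\dns=\bigcup_{\gamma\in L}\{\beta\in L\mid \beta\vee\gamma\in V\}$ is a union of preimages of $V$ under the continuous maps $\beta\mapsto\beta\vee\gamma$. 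Hence $L\setminus V\dns$ is closed and contains the limit $\gamma_0$ of your net $c_{\CCF}(G_d)$; but $\gamma_0\le c_{\CCF}(F)\in V$ forces $\gamma_0\in V\dns$, a contradiction, and upper semicontinuity follows.

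Everything preceding that point checks out: the separation argument for necessity of (4) via closedness of the order on a compact Hausdorff semilattice; the attainment of $\sup\{\alpha\mid(F,\alpha)\in\CCF\}$ because each slice is a nonempty closed subsemilattice (hence an up-directed closed set) of a compact semilattice; the derivation of monotonicity from (2) and (3) alone, which conveniently sidesteps the evident typos in the statement (condition (1) is missing the hypothesis $F\subset G$, and the ``$\in S$'' in (2) should read ``$\in\CCF$''); and the reduction of upper semicontinuity to showing that $F$ lies in the Vietoris closure of the upper set $Z=\{G\in\exp X\mid c_{\CCF}(G)\notin V\dns\}$ by adjoining one point from each $U_i$. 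So the proposal is a correct proof modulo the one-line completion above.
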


It was also proved in \cite{Nyk:CapLat:08} that the mapping
$\sub:M_LX\to\exp(\exp X\times L)$ is an embedding.


\section{Crisp ambiguous representations}

We need several technical results.

\begin{lem}\label{int-G}
Let $X$ be a compactum, and let a subset $G\subset \exp X$ be such
that $A\subset A'\subcl X$, $A\in G$ implies $A'\in G$. Then $G$ is
closed if and only if, for each filtered collection $\CCF$ of
elements of $G$, we have $\bigcap\CCF\in G$.
\end{lem}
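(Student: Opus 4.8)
The statement is a characterization of closedness of an upward-closed family $G\subset\exp X$ in terms of stability under filtered intersections. The plan is to prove both implications separately, using compactness of $\exp X$ and the fact that $\exp X$ is itself a compactum (so that filtered collections of closed sets have nonempty closed intersection, and the intersection is a limit of the collection in a suitable sense).

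First I would prove the ``only if'' direction. Suppose $G$ is closed, and let $\CCF$ be a filtered collection of elements of $G$. Since $\exp X$ is compact, $\CCF$ has a cluster point $A_0\in\exp X$, which (by closedness of $G$) lies in $G$; the natural candidate for $A_0$ is $\bigcap\CCF$, and I would verify that $\bigcap\CCF$ is in fact the limit of the net $\CCF$ (directed by reverse inclusion) in the Vietoris topology — this is a standard fact about hyperspaces of compacta: a decreasing (filtered) net of nonempty closed sets Vietoris-converges to its intersection. Hence $\bigcap\CCF\in G$ by closedness.

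Next, the ``if'' direction, which I expect to be the main obstacle. Assume $G$ is upward-closed and stable under filtered intersections; I want to show $G$ is closed, i.e. $\Cl G\subset G$. Take $A_0\in\Cl G$ and pick a net $(A_\lambda)$ in $G$ Vietoris-converging to $A_0$. The difficulty is that an arbitrary convergent net need not be filtered, so I cannot directly apply the hypothesis. The key trick is to replace the net by the filtered collection of ``tails unioned appropriately'': for each open $U\supset A_0$, upper semicontinuity of Vietoris convergence gives that $A_\lambda\subset U$ eventually; I would then consider, for each such $U$, a closed set $B_U\in G$ with $A_0\subset B_U\subset U$ — obtained by taking some $A_\lambda\subset U$ and enlarging it to $A_\lambda\cup A_0 \subset U$ (which lies in $G$ since $G$ is upward-closed, after checking $A_\lambda\cup A_0$ is closed and contains $A_\lambda\in G$). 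The collection $\{B_U\}$, indexed by open neighborhoods of $A_0$ directed by reverse inclusion, is filtered (intersect two such sets, or rather pass to a smaller neighborhood), each $B_U\in G$, and $\bigcap_U B_U = A_0$ because $X$ is Hausdorff and $A_0$ is closed (so $A_0=\bigcap\{U\mid U\supset A_0\text{ open}\}$, and $A_0\subset B_U\subset U$). By the hypothesis, $A_0=\bigcap_U B_U\in G$, which is what we wanted.

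The one point requiring care is ensuring the collection $\{B_U\}$ is genuinely filtered: given $B_{U_1}, B_{U_2}$, I take $U_3$ with $A_0\subset U_3\subset U_1\cap U_2$ and note $B_{U_3}\subset U_3\subset U_1\cap U_2$, but I need $B_{U_3}\subset B_{U_1}$ rather than just $B_{U_3}\subset U_1$. To fix this, I would instead define $B_U$ by a more careful selection, or — cleaner — simply take the collection $\CCF=\{A_\lambda\cup A_0\mid \lambda\}$ together with all finite unions, no: the slickest fix is to take $\CCF = \{\, \bigcup_{i=1}^k A_{\lambda_i} \cup A_0 \mid k\in\BBN,\ \lambda_1,\dots,\lambda_k\,\}$ restricted to those contained in a given neighborhood, and observe that by directedness of the index set and convergence $A_\lambda\to A_0$, this is filtered with intersection exactly $A_0$. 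All members lie in $G$ by upward-closedness. This yields $A_0\in G$ and completes the proof.
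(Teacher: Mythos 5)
Your necessity argument is correct and is exactly the paper's: a filtered family of nonempty closed sets, viewed as a net directed by reverse inclusion, Vietoris-converges to its intersection, so closedness of $G$ forces $\bigcap\CCF\in G$.

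The sufficiency direction has a genuine gap, and you half-noticed it yourself. Your first family $\{A_\lambda\cup A_0\}$ is indeed not filtered, but the proposed repair --- passing to finite unions $A_{\lambda_1}\cup\dots\cup A_{\lambda_k}\cup A_0$ --- goes in the wrong direction: closing under finite unions yields an \emph{upward}-directed family, whereas ``filtered'' here means downward directed (for any two members you need a member contained in their intersection; this is what $\bigcap\CCF$ and your own necessity argument presuppose). Given $B_1=A_{\lambda_1}\cup A_0$ and $B_2=A_{\lambda_2}\cup A_0$, the set $B_1\cap B_2$ is merely some closed set containing $A_0$, and Vietoris convergence only forces later $A_\mu$ into \emph{open} neighborhoods of $A_0$; nothing places any $A_\mu$, or any finite union of them, inside $B_1\cap B_2$. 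So the hypothesis on filtered intersections cannot be applied to the family you construct. The fix is to fatten up to closed neighborhoods, which also makes the convergent net unnecessary: for every open $U\supset A_0$ the Vietoris-open set $\langle U\rangle$ meets $G$, so some $A'\in G$ satisfies $A'\subset U\subset\Cl U$, whence $\Cl U\in G$ by upward-closedness; the family $\{\Cl U\mid A_0\subset U\subop X\}$ \emph{is} filtered (given $U_1,U_2$, the member $\Cl(U_1\cap U_2)$ lies in $\Cl U_1\cap\Cl U_2$) and, by regularity of the compactum $X$, its intersection is exactly $A_0$. This is precisely the paper's argument.
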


\begin{proof} {\sl Necessity.}
We can regard the aforementioned $\CCF$ as a net that converges to
$\bigcap\CCF$, hence, for a closed $G$, the inclusion $\CCF\subset
G$ implies $\bigcap \CCF\in G$.

{\sl Sufficiency.} Let $A\in\exp X$ be a point of closure of $G$,
then for all $U\subop X$ such that $A\subset U$ there is $A'\in
\exp X$ such that $A'\in G$, and $A'\subset U$. This implies that
$\Cl U\in G$ for all open neighborhoods $U\supset A$. The closures
of these neighborhoods form a filtered collection with the
intersection $A$, hence, by assumption, $A\in G$.
\end{proof}

\begin{rem*}
A~\emph{non-empty} $G\subset \exp X$ that satisfies the conditions
of the previous lemma is precisely an inclusion hyperspace.
\end{rem*}

\begin{lem}\label{int-un-cl}
Let $X$ be a compactum, $L$ a compact Lawson upper semilattice and
let a subset $R\subset\exp X\times\exp Y$ be such that, for
$A,A'\in\exp X$, $\alpha,\alpha'\in L$, $A'\supset A$,
$\alpha'\le\alpha$, if $(A,\alpha)\in R$, then $(A',\alpha')\in R$.
Then $R$ is closed if and only if the following two conditions
hold:

1) for all $\alpha\in L$ and each filtered collection $\CCA$ of
elements of $\exp X$ such that $\CCA\times\{\alpha\}\subset R$, we
have $(\bigcap\CCA,\alpha)\in R$; and

2) for all $A\in\exp X$ the set of all $\alpha\in L$ such that
$(A,\alpha)\in R$ is closed.
\end{lem}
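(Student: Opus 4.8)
The plan is to adapt the proof of Lemma~\ref{int-G} by carrying the extra $L$-coordinate along and using that the compactum $X$ is normal; I treat $R$ as a subset of $\exp X\times L$, and note that only the topology and the order of $L$ are used, not its lattice structure.

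For \emph{necessity}, assume $R$ is closed. To obtain~(1), I would take a filtered collection $\CCA\subset\exp X$ with $\CCA\times\{\alpha\}\subset R$, regard $\CCA$ (directed by reverse inclusion) as a net, and observe, exactly as in Lemma~\ref{int-G}, that it converges to $\bigcap\CCA$ in the Vietoris topology, with $\bigcap\CCA\ne\ES$ by compactness of $X$; then $\bigl((A,\alpha)\bigr)_{A\in\CCA}$ is a net in the closed set $R$ converging to $(\bigcap\CCA,\alpha)$, so the latter lies in $R$. To obtain~(2), I would fix $A\in\exp X$ and note that $\beta\mapsto(A,\beta)$ is a continuous map $L\to\exp X\times L$ under which the preimage of $R$ is precisely $\{\beta\in L\mid(A,\beta)\in R\}$, which is therefore closed.

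For \emph{sufficiency}, assume (1) and (2) and let $(A_0,\alpha_0)$ be a point of closure of $R$; the idea is to decouple the two coordinates. First I would fix $U\subop X$ with $A_0\subset U$ and let $V$ range over open neighbourhoods of $\alpha_0$ in $L$: since $\langle U\rangle\times V$ is an open neighbourhood of $(A_0,\alpha_0)$, it meets $R$, giving $(A',\alpha')\in R$ with $A'\subset U$ and $\alpha'\in V$; then $A'\subset\Cl U$ together with the monotonicity hypothesis yields $(\Cl U,\alpha')\in R$. Hence the set $\{\beta\in L\mid(\Cl U,\beta)\in R\}$ meets every neighbourhood of $\alpha_0$, and, being closed by~(2), it contains $\alpha_0$; that is, $(\Cl U,\alpha_0)\in R$ for every $U\subop X$ with $A_0\subset U$. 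The collection $\{\Cl U\mid U\subop X,\ A_0\subset U\}$ is filtered with intersection $A_0$ (normality of the compactum $X$, as in Lemma~\ref{int-G}), and its product with $\{\alpha_0\}$ is contained in $R$, so condition~(1) gives $(A_0,\alpha_0)\in R$; thus $R$ is closed.

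The genuine content — everything else being routine — is getting the order of the two limit passages right in the sufficiency part: one must first push the $\exp X$-coordinate \emph{up} to $\Cl U$ so that condition~(2) becomes applicable on a fixed closed slice, and only afterwards push it back \emph{down} to $A_0$ via condition~(1). Trying to control both coordinates simultaneously fails, because the points of $R$ furnished by the closure hypothesis need not match the target $(A_0,\alpha_0)$ in either coordinate.
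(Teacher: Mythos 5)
Your proof is correct and follows essentially the same route as the paper's: in the sufficiency direction the paper likewise first enlarges the first coordinate to a closed neighbourhood $V\supset A$, applies condition (2) on that fixed slice to capture $\alpha$, and only then lets the closed neighbourhoods shrink to $A$ via condition (1). The necessity arguments also coincide (net convergence of a filtered family to its intersection for (1), and closedness of the slice for (2)).
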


\begin{proof}
{\sl Necessity} of 1) is due to the previous lemma, and is obvious
for 2).

{\sl Sufficiency.} Suppose that 1), 2) hold, and let
$(A,\alpha)\in\exp X\times L$ be a point of the closure of $R$. For
any closed neighborhood $V\supset B$ and any open neighborhood
$O_\alpha\ni
\alpha$ in $L$, there are $A'\in\exp X$, $\alpha'\in L$ such that
$A'\subset V$, $\alpha'\in O_\alpha$, $(A',\alpha')\in R$, hence
$(V,\alpha')\in R$. Therefore $\alpha$ is a point of the closure of
the set of all $\alpha'\in L$ such that $(V,\alpha')\in R$, and, by
2), $(V,\alpha)\in R$. All closed neighborhoods $V\supset A$ form a
net that converges to $A$, thus by 1) we obtain $(A,\alpha)\in R$.
\end{proof}

Now we are ready to introduce the~main notion of this work.

\begin{defn}
An \emph{ambiguous representation} between compacta $X$ and $Y$ is
a subset $R\subset\exp X\times \exp Y$ such that:

a) if $A,A'\in\exp X$, $B,B'\in \exp Y$, $A'\subset A$, $B\subset
B'$, $(A,B)\in R$, then $(A',B')\in R$;

b) $(A,Y)\in R$ for all $A\in\exp X$; and

c) for all $A\in\exp X$ the set $AR=\{B\in\exp Y\mid (A,B)\in R\}$
is closed in $\exp Y$.
\end{defn}

Now we suggest a~simple model example.
\begin{exam}
Let $Y$ be the~set of students, say, of a~math department, and $X$
be the~set of all possible marks at a~test. Assume that, for
$A\subset X$ and $B\subset Y$, $A,B\ne\ES$, we have $(A,B)\in R$ if
it is likely that, after a~test, $A$ is a~\emph{subset} of
the~marks of the~students of $B$. Then $A$ in some sense represents
the~group $B$ of students, and it is obvious that a) holds.
The~property b) means that students' skills vary enough to result
in a~collection which contains \emph{any} possible marks. Since
both $X$ and $Y$ are finite, c) is immediate in this case, but in
general it means that, if $A$ can represent sets, which are
arbitrarily close to a~particular $B$, then $A$ is an~appropriate
representation for this~$B$.
\end{exam}

In the~last section there is a~more extensive discussion of
motivation for such a~definition. The~authors experienced
difficulties in choosing a~term for the~investigated relations, in
particular because most ``good words'' like ``fuzzy'', ``rough'',
``vague'' etc have been already ``occupied''. We needed to express
two features: one set is not necessarily a~copy or an~image of
another one (although this is also possible), but represents it in
some manner, which may vary; and there can be many valid
representatives for a~set, as well as a~set can represent many
sets, hence such a~relation is ambiguous. Thus we have arrived at
``ambiguous representations''.

Nonetheless, the~just introduced concept is not quite unrelated
to rough sets.
\begin{exam}
Let $\sim$ be an~\emph{indiscernibility relation} on a~finite set
$X$, i.e.\ it is an~equivalence relation that contains all pairs
of elements $x,y\in X$ such that we cannot distinguish $x$ from $y$.
Then two subsets $A,B$ are indistinguishable if and only if their
\emph{upper approximations}
$
\overline{appr}_{\sim}A=\{x\in X
\mid x\sim a\text{ for some }a\in A\}
$
and
$
\overline{appr}_{\sim}B=\{y\in X
\mid y\sim b\text{ for some }b\in B\}
$
are equal. Let $R\subset \exp X\times \exp X$ consist from all
pairs $(A,B)$ such that non-empty $A$ is indistinguishable from
a~subset of $B$, i.e. $\overline{appr}_{\sim}A\subset
\overline{appr}_{\sim}B$. Then $R$ is an~ambiguous representation
between $X$ and $X$.
\end{exam}

In the~next section the~concept of ambiguous representation will
also be modified in the~spirit of fuzzy mathematics.

We denote the~set of all ambiguous representations between $X$ and
$Y$ by $\Camb(X,Y)$.

\begin{defn}
If $R$ is an~ambiguous representation, and $(A,B)\in R$, we say
that $B$ is $R$-\emph{admissible} for $A$. If $C\subcl Y$, $C\cap
B\ne\ES$ for all $B\in AR$, we call $C$ an $R$-\emph{unavoidable}
set for $A$.
\end{defn}

\begin{exam}
Let $R\subset \exp X\times \exp X$ be the~relation defined in the~previous
example. Then $C\subset X$ is $R$-unavoidable for $A\subset X$ if
and only if the~upper approximation $\overline{appr}_{\sim}A$ has
non-empty intersection with the~\emph{lower approximation}
$
\underline{appr}{}_{\sim}C=\{c\in C
\mid x\in C\text{ for all }x\in X, x\sim c\}
$.
\end{exam}

\begin{exam}
Let $C$ be the~set of all students of a~department that are able to
obtain a~highest mark. If a~set $A$ contains such a~mark and
represents a~set $B$ of students, then $B$ must contain at least
one well-prepared student, hence $B\cap C\ne\ES$, and $C$ in
unavoidable for $A$.
\end{exam}

For an ambiguous representation $R\subset\exp X\times \exp Y$ we
define the relation $R^\sms\subset \exp Y\times\exp X$ as follows:
\begin{multline*}
R^\sms=\{(\tilde B,\tilde A)\in \exp Y\times\exp X\mid
\\
\forall A\in\exp X \;( A\cap \tilde A=\ES\implies
\exists B\in AR\;\; B\cap \tilde B =\ES)\},
\end{multline*}
i.e. $(\tilde B,\tilde A)\in R^\sms$ if and only if
$\tilde A$ has non-empty intersections with all $A\in\exp X$ such that
$\tilde B$ is $R$-unavoidable for $A$. Thus
$$
\tilde BR^\sms=\{A\in\exp X\mid \tilde B\in (AR)^\perp\}^\perp
$$
for all $\tilde B\in\exp Y$.

Observe that for $R^\sms$ conditions a),b) of the definition of an
ambiguous representation between $Y$ and $X$ obviously hold. The
previous formula implies c), hence $R^\sms\in\Camb(Y,X)$.

\begin{exam}
Let $X$ and $Y$ be the~squares $[0,1]\times [0,1]$ and
$[0,1]\times[1,2]$, respectively. For $A\in\exp X$, $B\in\exp Y$,
assume $(A,B)\in R$ if $pr_1(A)\subset \pr_1(B)$. In other words,
$A$ can represent $B$ if and only if it is covered by the~``shade''
of $B$ under vertical light. Then $(\bar B,\bar A)\in\exp Y\times
\exp X$ is in $R^\sms$ if and only if $\pr_1(X\setminus \bar
A)\subset\pr_1(Y\setminus \bar B)$, i.e.\ each point outside of
$\bar A$ is in the~``shade'' of a~point outside of $\bar B$.
\end{exam}

\begin{stat}
If $R\subset \exp X\times \exp Y$ is an ambiguous representation,
then $(R^\sms)^\sms\subset R$.
\end{stat}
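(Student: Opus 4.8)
The plan is to unwind the two applications of the $(-)^\sms$ operation and use the characterization $\tilde B R^\sms=\{A\in\exp X\mid \tilde B\in(AR)^\perp\}^\perp$ twice. Fix $(A_0,B_0)\in(R^\sms)^\sms$; I want to show $(A_0,B_0)\in R$, i.e. $B_0\in A_0R$. By definition of $(R^\sms)^\sms$ applied to the ambiguous representation $R^\sms\in\Camb(Y,X)$, we have
$$
A_0\in B_0(R^\sms)^\sms=\{\tilde B\in\exp Y\mid B_0\in(\tilde BR^\sms)^\perp\}^\perp,
$$
which means: for every $\tilde B\in\exp Y$ with $B_0\in(\tilde BR^\sms)^\perp$, we have $A_0\cap\tilde B\ne\ES$. (Here I am reading off that $(\tilde B,A_0)\in R^\sms$ precisely when $A_0$ meets every element of $(\tilde BR^\sms)^\perp$; wait — more carefully, the outer traversal says $A_0$ meets every $\tilde B$ lying in the set $\{\tilde B\mid B_0\in(\tilde BR^\sms)^\perp\}$.)

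The key step is to argue by contradiction: suppose $B_0\notin A_0R$. Since $A_0R$ is an inclusion hyperspace in $\exp Y$ (conditions a), b), c) give exactly that $A_0R$ is closed, upward closed, nonempty), and $B_0\notin A_0R$, there must be a "witness" to this failure. The natural witness is a closed set $\tilde B\subset Y$ that is $R$-unavoidable for $A_0$ but disjoint from $B_0$ — because if every $R$-unavoidable $\tilde B$ for $A_0$ met $B_0$, then $B_0\in((A_0R)^\perp)^\perp=A_0R$ (using that $A_0R$ is upward closed, so the double traversal returns it). So choose such a $\tilde B$: $\tilde B\in(A_0R)^\perp$ and $\tilde B\cap B_0=\ES$.

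Now I claim this $\tilde B$ contradicts the displayed membership $A_0\in B_0(R^\sms)^\sms$. I must check that $B_0\in(\tilde BR^\sms)^\perp$, i.e. that $B_0$ meets every element of $\tilde BR^\sms$. From the formula $\tilde BR^\sms=\{A\in\exp X\mid\tilde B\in(AR)^\perp\}^\perp$, an element $A'$ of $\tilde BR^\sms$ is a closed set meeting every $A$ with $\tilde B\in(AR)^\perp$ — in particular, since $\tilde B\in(A_0R)^\perp$ by choice, every $A'\in\tilde BR^\sms$ meets $A_0$. That is not quite $B_0$ meeting $A'$, so I need to be more careful here: the correct reading is that $\tilde B\in(AR)^\perp$ means $\tilde B$ is $R$-unavoidable for $A$, and I want to conclude something about $B_0$. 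The honest route: show $A_0\in\{A\mid\tilde B\in(AR)^\perp\}$ (true, as just noted), hence by the outer traversal every $A'\in\tilde BR^\sms$ satisfies $A'\cap A_0\ne\ES$ — but what I actually need is $B_0\cap A'\ne\ES$ for all $A'\in\tilde BR^\sms$, which lives in the wrong space. So the pairing must instead be: $\tilde B$ disjoint from $B_0$ and $\tilde B$ $R$-unavoidable for $A_0$ together should make $B_0$ an element witnessing $A_0\notin B_0(R^\sms)^\sms$. Concretely, unwinding $(R^\sms)^\sms$ directly from its set-theoretic definition rather than via the traversal formula: $(A_0,B_0)\in(R^\sms)^\sms$ iff for all $\tilde B\in\exp Y$, if $\tilde B\cap B_0=\ES$ then there is $\tilde A\in\tilde BR^\sms$ with $\tilde A\cap A_0=\ES$. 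Apply this to our $\tilde B$ (which is disjoint from $B_0$): we get $\tilde A\in\tilde BR^\sms$ with $\tilde A\cap A_0=\ES$. But $\tilde A\in\tilde BR^\sms$ means, by definition of $R^\sms$, that for all $A\in\exp X$, $A\cap\tilde A=\ES\implies\exists B\in AR,\ B\cap\tilde B=\ES$. Take $A=A_0$: since $A_0\cap\tilde A=\ES$, there is $B\in A_0R$ with $B\cap\tilde B=\ES$ — i.e. $\tilde B$ is NOT $R$-unavoidable for $A_0$, contradicting the choice of $\tilde B$. This contradiction gives $B_0\in A_0R$, so $(A_0,B_0)\in R$.

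The main obstacle is bookkeeping: there are three alternating negations/traversals, and it is easy to pair up sets living in $\exp X$ with sets living in $\exp Y$ incorrectly. I would therefore avoid the traversal-formula shortcuts and work throughout from the raw quantifier definition of $R^\sms$ (the displayed multiline formula), instantiating the universally quantified $A$ at the specific set $A_0$ at each stage. The one genuinely non-formal ingredient is the existence of the witness $\tilde B$, which rests on the identity $((A_0R)^\perp)^\perp=A_0R$; this is exactly the cited fact that $\CCA^{\perp\perp}=\CCA$ for inclusion hyperspaces, and conditions a)–c) guarantee $A_0R\in GY$.
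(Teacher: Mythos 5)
Your final argument (the one beginning ``Concretely, unwinding $(R^\sms)^\sms$ directly from its set-theoretic definition'') is correct and complete; the earlier attempt via the traversal formula, which you rightly abandon as living ``in the wrong space,'' should simply be deleted. The clean chain is: if $B_0\notin A_0R$, then since $A_0R$ is an inclusion hyperspace (by a), b), c)) we have $A_0R=(A_0R)^{\perp\perp}$, so some $\tilde B\in(A_0R)^\perp$ is disjoint from $B_0$; the membership $(A_0,B_0)\in(R^\sms)^\sms$ then produces $\tilde A\in\tilde BR^\sms$ disjoint from $A_0$; and $(\tilde B,\tilde A)\in R^\sms$ instantiated at $A=A_0$ produces $B\in A_0R$ disjoint from $\tilde B$, contradicting $\tilde B\in(A_0R)^\perp$. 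This differs in route from the paper's proof, which computes $A(R^\sms)^\sms$ as a chain of traversal identities, using in an essential way that the double traversal of the (generally non-closed, but upward-closed) family $\{A'\in\exp X\mid\tilde B\in(A'R)^\perp\}$ equals its closure, and then describing the result via closed neighborhoods $V\supset A$. The paper's computation buys more: it yields the explicit formula $A(R^\sms)^\sms=\Cl\bigl(\bigcup\{VR\mid V\text{ a closed neighborhood of }A\}\bigr)$, which is exactly what Corollary~\ref{char-pseudo} needs to characterize pseudo-invertibility. Your argument is more elementary --- it needs only $(A_0R)^{\perp\perp}=A_0R$ and raw quantifier bookkeeping --- but it proves only the inclusion and would have to be supplemented to obtain that formula.
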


\begin{proof}
For all $A\in\exp X$:
\begin{gather*}
A(R^\sms)^\sms=
\{\tilde B\in\exp Y\mid
A\in(\tilde BR^\sms)^\perp\}^\perp=
\\
\{\tilde B\in\exp Y\mid
A\in\{A'\in\exp X\mid \tilde B\in(A'R)^\perp\}^{\perp\perp}\}^\perp=
\\
\{\tilde B\in\exp Y\mid
A\in\Cl\{A'\in\exp X\mid \tilde B\in(A'R)^\perp\}\}^\perp=
\\
\{\tilde B\in\exp Y\mid
\tilde B\in(VR)^\perp\text{ for all }V\subcl X,A\subset\Int V\}^\perp\subset
\\
\{\tilde B\in\exp Y\mid
\tilde B\in(AR)^\perp\}^\perp=(AR)^{\perp\perp}=AR.
\end{gather*}
\vskip-18pt
\end{proof}

\begin{cons}\label{char-pseudo}
For an ambiguous representation $R\subset \exp X\times \exp Y$, the
equality $(R^\sms)^\sms=R$ is valid if and only if:

d) for all $(A,B)\in R$ and a closed neighborhood $V\supset B$
there is a closed neighborhood $U\supset A$ such that $(U,V)\in R$.
\end{cons}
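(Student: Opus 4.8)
The plan is to harvest the statement directly from the chain of identities established in the proof of the preceding Proposition, which showed that for every $A\in\exp X$,
$$
A(R^\sms)^\sms=\CCB_A^{\,\perp},\qquad
\CCB_A:=\{\tilde B\in\exp Y\mid \tilde B\in(VR)^\perp\text{ for every }V\subcl X\text{ with }A\subset\Int V\},
$$
together with the inclusion $\CCB_A^{\,\perp}\subset AR$. Since a relation is determined by its sections $A\mapsto AR$, the equality $(R^\sms)^\sms=R$ is therefore equivalent to $AR\subset\CCB_A^{\,\perp}$ for all $A\in\exp X$; and reading the defining property of $(\,\cdot\,)^\perp$ symmetrically --- ``every $B\in AR$ meets every $\tilde B\in\CCB_A$'' is literally the same as ``every $\tilde B\in\CCB_A$ meets every $B\in AR$'' --- this is in turn the same as $\CCB_A\subset(AR)^\perp$ for all $A$. (One also checks instantly that $\CCB_A\supset(AR)^\perp$ always, because $A\subset V$ forces $VR\subset AR$ by a); so only the inclusion $\CCB_A\subset(AR)^\perp$ carries content.) Thus the proof reduces to showing that d) is equivalent to the assertion: for all $A\in\exp X$ and $\tilde B\in\exp Y$, if $\tilde B\in(VR)^\perp$ for every closed $V$ with $A\subset\Int V$, then $\tilde B\in(AR)^\perp$.

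For the direction d) $\Rightarrow$ that assertion, I would argue contrapositively. If $\tilde B\notin(AR)^\perp$, choose $B\in AR$ with $B\cap\tilde B=\ES$ and, using normality of the compactum $Y$, an open $W$ with $B\subset W$ and $\Cl W\cap\tilde B=\ES$. Then $\Cl W$ is a closed neighbourhood of $B$, so d) supplies a closed neighbourhood $U\supset A$ with $(U,\Cl W)\in R$, i.e.\ $\Cl W\in UR$; as $U$ is closed with $A\subset\Int U$ while $\Cl W$ misses $\tilde B$, this gives $\tilde B\notin(UR)^\perp$, hence $\tilde B\notin\CCB_A$.

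For the converse I would start from $(A,B)\in R$ and a closed neighbourhood $V\supset B$. The degenerate case $V=Y$ is handled by $U=X$ using b). Otherwise $\Int V\ne Y$, so $\tilde B:=Y\setminus\Int V$ is a non-empty closed subset of $Y$; since $B\subset\Int V$, it is disjoint from $B\in AR$, so $\tilde B\notin(AR)^\perp$, and the assumed assertion gives $\tilde B\notin\CCB_A$, i.e.\ a closed $U\supset A$ with $A\subset\Int U$ and some $B'\in UR$ disjoint from $\tilde B$, that is $B'\subset\Int V\subset V$. Applying a) to $(U,B')\in R$ and the inclusion $B'\subset V$ yields $(U,V)\in R$, which is exactly d). The one point that needs attention throughout is the orientation of the monotonicity in a) --- the first coordinate may only be shrunk, the second only enlarged --- so that enlarging $B'$ to $V$ (and in the first part keeping $A$ fixed while enlarging $U$) is legitimate; this, plus the degenerate case $V=Y$, is the entire delicacy of the argument.
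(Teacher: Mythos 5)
Your argument is correct and takes essentially the same route as the paper: you extract from the chain of identities in the preceding proposition the explicit description $A(R^\sms)^\sms=\CCB_A^{\,\perp}$, which coincides with the paper's formula $\Cl\bigl(\bigcup\{VR\mid V\text{ a closed neighbourhood of }A\}\bigr)$ since $\CCB_A=\bigl(\bigcup_V VR\bigr)^\perp$ and that union is closed under taking supersets, and you then carry out in detail the equivalence with d) that the paper leaves to the reader. The monotonicity checks, the use of normality of $Y$, and the degenerate case $V=Y$ are all handled correctly.
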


\begin{proof}
>From the latter formula it is easy to obtain an explicit
expression for $(R^\sms)^\sms$: if $A\in\exp X$, then
$$
A(R^\sms)^\sms=\Cl(\bigcup \{VR\mid V\text{ is a closed neighborhood of
}A\}).
$$
\vskip-23pt
\end{proof}

\begin{defn}
If $R\subset \exp X\times \exp Y$ is an ambiguous representation
such that $(R^\sms)^\sms=R$, that we call $R^\sms$
\emph{pseudo-inverse} to $R$, and $R$ is called
\emph{pseudo-invertible}.
\end{defn}

\begin{rem*}
It is obvious that then $R$ is pseudo-inverse to $R^\sms$, and
$R^\sms$ is pseudo-invertible as well.
\end{rem*}

The set of all pseudo-invertible ambiguous representations between
$X$ and $Y$ is denoted by $\Cpamb(X,Y)$.

The composition of ambiguous representations $R\in\Camb(X,Y)$ and
$S\in\Camb(Y,Z)$ is defined in the usual way~:
\begin{gather*}
R\ccirc S=\{(A,C)\in\exp X\times \exp Z\mid \text{there is
}B\in\exp Y
\\
\text{ such that }(A,B)\in R,(B,C)\in S\}.
\end{gather*}

It is easy, however, to find an example of two ambiguous
representations (even pseudo-invertible ones) such that their
composition is not an ambiguous representation. To obtain a
category, we have two options: either to modify the composition
law, or to restrict the class of allowed relations.

For ambiguous representations $R\subset \exp X\times\exp Y$ and
$S\subset \exp Y\times\exp Z$, let a relation $R\bcirc S\subset\exp
X\times \exp Z$ be defined by the equality $A(R\bcirc S)=
\Cl(A(R\ccirc S))$ for all $A\in\exp X$ (the closure is taken w.r.t. the
Vietoris topology). In other words, $C\in A(R\bcirc S)$ if and only
if for all closed neighborhoods $V\subset C$ there is $B\in\exp Y$
such that $(A,B)\in R$, $(B,V)\in S$. Then $R\bcirc S$ is an
ambiguous representation. Unfortunately, the equality
$(R\bcirc S)\bcirc T=R\bcirc (S\bcirc T)$ is not valid in general,
hence compacta and ambiguous representations do not form a category.

\begin{lem}\label{lem.rs-sr}
For ambiguous representations $R\subset \exp X\times\exp Y$,
$S\subset\exp Y\times \exp Z$ the inclusion $S^\sms\bcirc
R^\sms\subset(R\bcirc S)^\sms$ is valid.
\end{lem}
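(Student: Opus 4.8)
The plan is to prove the inclusion by unwinding the definitions of $\bcirc$ and $(\cdot)^\sms$ in terms of traversals $(\cdot)^\perp$, and then reduce everything to elementwise manipulations of sections. Fix $\tilde C \in \exp Z$. By definition $\tilde C (S^\sms \bcirc R^\sms) = \Cl\bigl(\tilde C(S^\sms \ccirc R^\sms)\bigr)$, and $\tilde A \in \tilde C(S^\sms \ccirc R^\sms)$ iff there is $\tilde B \in \exp Y$ with $(\tilde C, \tilde B) \in S^\sms$ and $(\tilde B, \tilde A) \in R^\sms$, i.e.\ $\tilde B \in \tilde C S^\sms$ and $\tilde A \in \tilde B R^\sms$. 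Using the formula $\tilde B R^\sms = \{A \in \exp X \mid \tilde B \in (AR)^\perp\}^\perp$ established in the excerpt (and the analogous one for $S^\sms$), I would write both membership conditions purely in terms of $\perp$ and the sections $AR$, $BS$. On the other side, $(R\bcirc S)^\sms$ is computed the same way from $R\bcirc S$, whose sections are $A(R\bcirc S) = \Cl(A(R\ccirc S))$.

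First I would record the key algebraic facts about traversals that will do the work: $\perp$ is antitone; $\CCA \subset \CCB^\perp \iff \CCB \subset \CCA^\perp$ (the symmetry of "non-empty intersection"); $(\bigcup_i \CCA_i)^\perp = \bigcap_i \CCA_i^\perp$; and, crucially, $(\CCA^{\perp\perp})^\perp = \CCA^\perp$ together with $\CCA^{\perp\perp} = \Cl(\CCA\ups)$ when $\CCA$ is upward closed (all stated in Section 1). Taking closures is harmless under $\perp$ in exactly the same way. Armed with these, the proof becomes a chase: given $\tilde A \in \tilde C(S^\sms \bcirc R^\sms)$, I must show $(\tilde C, \tilde A) \in (R\bcirc S)^\sms$, i.e.\ that $\tilde A$ meets every $A \in \exp X$ for which $\tilde C$ is $(R\bcirc S)$-unavoidable. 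So fix such an $A$: $\tilde C \in (A(R\bcirc S))^\perp = (\Cl\, A(R\ccirc S))^\perp = (A(R\ccirc S))^\perp = \bigcap\{(BS)^\perp \mid B \in AR\}$, the last step because $A(R\ccirc S) = \bigcup\{BS \mid B \in AR\}$. Thus $\tilde C \in (BS)^\perp$ for every $B \in AR$.

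Now the argument proceeds elementwise in $Y$. From $(\tilde C, \tilde B) \in S^\sms$ — rewritten via the section formula — one extracts that $\tilde B$ meets every $B \in \exp Y$ for which $\tilde C$ is $S$-unavoidable, i.e.\ every $B$ with $\tilde C \in (BS)^\perp$; in particular $\tilde B \cap B \neq \ES$ for all $B \in AR$. Hence $\tilde B$ is $R$-unavoidable for $A$ (it meets every element of $AR$), so $\tilde B \in (AR)^\perp$. Then from $(\tilde B, \tilde A) \in R^\sms$, i.e.\ $\tilde A \in \{A' \mid \tilde B \in (A'R)^\perp\}^\perp$, and the fact that $A$ belongs to the set $\{A' \mid \tilde B \in (A'R)^\perp\}$, we conclude $\tilde A \cap A \neq \ES$. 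This is exactly what was needed, so the dense subset $\tilde C(S^\sms \ccirc R^\sms)$ lands inside $\tilde C(R\bcirc S)^\sms$; since the latter is closed (property c) of an ambiguous representation), the closure $\tilde C(S^\sms \bcirc R^\sms)$ is contained in it as well, and the inclusion follows.

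I expect the main obstacle to be bookkeeping rather than a genuine difficulty: namely keeping straight which space each $\perp$ is taken in, and correctly handling the two places where closures appear (inside $\bcirc$ and inside the double-traversal identity), making sure that $(\Cl\CCA)^\perp = \CCA^\perp$ is invoked with the right upward-closedness hypothesis. A secondary point to be careful about is that the sections $AR$, $BS$ are non-empty (guaranteed by property b), so that all the traversals are taken of non-empty families and the identities from Section 1 apply verbatim. Once the elementwise reduction in $Y$ is set up correctly, no topology beyond "sections are closed" is needed, and the closure step at the end is automatic.
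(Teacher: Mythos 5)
Your argument is correct: the chain $\tilde C\in(A(R\bcirc S))^\perp=(A(R\ccirc S))^\perp=\bigcap\{(BS)^\perp\mid B\in AR\}$, followed by $(\tilde C,\tilde B)\in S^\sms\Rightarrow\tilde B\in(AR)^\perp$ and $(\tilde B,\tilde A)\in R^\sms\Rightarrow\tilde A\cap A\ne\ES$, together with the final closure step (legitimate since the sections of $(R\bcirc S)^\sms$ are traversals, hence closed), is exactly the ``straightforward'' verification the paper leaves to the reader. The only cosmetic remark is that the identity $(\Cl\CCA)^\perp=\CCA^\perp$ you use needs no upward-closedness hypothesis (only the double-traversal identity does), so the caution in your last paragraph is unnecessary.
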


The proof is straightforward.

\begin{stat}\label{cpamb-comp}
Let $R\subset\exp X\times \exp Y$ and $S\subset \exp Y\times\exp Z$
be pseudo-invertible ambiguous representations. Then $R\bcirc
S\subset\exp X\times\exp Z$ is a pseudo-invertible ambiguous
representation as well, and $(R\bcirc S)^\sms= S^\sms\bcirc
R^\sms$.
\end{stat}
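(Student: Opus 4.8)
The plan is to prove the two assertions separately, using the pseudo-invertibility of $R$ and $S$ to upgrade Lemma~\ref{lem.rs-sr} and Proposition~\ref{char-pseudo} into the desired equalities. First I would record that $R\bcirc S\in\Camb(X,Z)$ is already known, so it only remains to verify condition (d) of Corollary~\ref{char-pseudo} for $R\bcirc S$ and to establish the identity $(R\bcirc S)^\sms=S^\sms\bcirc R^\sms$; the first of these gives pseudo-invertibility, and the second is the content of the statement. The key structural fact I would lean on is the explicit description $C\in A(R\bcirc S)$ iff for every closed neighborhood $V$ of $C$ there is $B\in\exp Y$ with $(A,B)\in R$ and $(B,V)\in S$, together with property (d) for $R$ and for $S$ individually.

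For condition (d) of $R\bcirc S$: suppose $(A,C)\in R\bcirc S$ and let $W\supset C$ be a closed neighborhood. Choose an intermediate closed neighborhood $C\subset\Int W'\subset W'\subset\Int W$. By the description of $R\bcirc S$ there is $B$ with $(A,B)\in R$ and $(B,W')\in S$; since $S$ satisfies (d), there is a closed neighborhood $V'\supset B$ with $(V',W)\in S$; since $R$ satisfies (d), there is a closed neighborhood $U\supset A$ with $(U,V')\in R$. Then for this $U$ we have $(U,B')\in R$ and $(B',W)\in S$ with $B'=V'$, and actually one needs $W$ itself (not just a neighborhood of $C$) to appear — which is fine because $W$ is a closed neighborhood of $C$, so $(U,W)\in R\ccirc S\subset A'(R\bcirc S)$-type reasoning gives $(U,C)$-admissibility after closure. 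I would spell this out carefully: the upshot is $(U,W)\in R\ccirc S$, and monotonicity (condition (a)) plus the definition of $R\bcirc S$ yield $(U,W)\in R\bcirc S$ with $W$ a closed neighborhood of $C$, which is exactly (d). Hence $R\bcirc S$ is pseudo-invertible.

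For the equality of pseudo-inverses: Lemma~\ref{lem.rs-sr} already gives $S^\sms\bcirc R^\sms\subset(R\bcirc S)^\sms$. For the reverse inclusion I would apply Lemma~\ref{lem.rs-sr} to the pair $R^\sms$, $S^\sms$ in place of $R$, $S$, obtaining $(S^\sms)^\sms\bcirc (R^\sms)^\sms\subset(R^\sms\bcirc S^\sms)^\sms$; but by pseudo-invertibility $(R^\sms)^\sms=R$ and $(S^\sms)^\sms=S$, so this reads $R\bcirc S\subset(R^\sms\bcirc S^\sms)^\sms$. Wait — the factors are in the wrong order, so instead I would apply Lemma~\ref{lem.rs-sr} with the roles arranged as $S^\sms\subset\exp Z\times\exp Y$ and $R^\sms\subset\exp Y\times\exp X$, giving $(R^\sms)^\sms\bcirc(S^\sms)^\sms\subset(S^\sms\bcirc R^\sms)^\sms$, i.e.\ $R\bcirc S\subset(S^\sms\bcirc R^\sms)^\sms$. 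Now apply the $(\,\cdot\,)^\sms$ operation, which is antitone and satisfies $((\,\cdot\,)^\sms)^\sms\subset(\,\cdot\,)$ by Proposition before Corollary~\ref{char-pseudo}: from $R\bcirc S\subset(S^\sms\bcirc R^\sms)^\sms$ we get $((S^\sms\bcirc R^\sms)^\sms)^\sms\subset(R\bcirc S)^\sms$, and since $S^\sms\bcirc R^\sms$ is itself pseudo-invertible (being a $\bcirc$-composition of pseudo-invertibles, by the first part applied to $S^\sms$, $R^\sms$), the left side equals $S^\sms\bcirc R^\sms$. Combining with Lemma~\ref{lem.rs-sr} gives $(R\bcirc S)^\sms=S^\sms\bcirc R^\sms$.

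The main obstacle I expect is the bookkeeping in the (d)-verification for $R\bcirc S$: one must interleave the neighborhoods correctly so that the $B$ produced from the $R\bcirc S$-membership, the neighborhood $V'$ produced from (d) for $S$, and the neighborhood $U$ produced from (d) for $R$ all line up to certify $(U,W)\in R\ccirc S$ for a genuine closed neighborhood $W$ of $C$ — and then to pass from $R\ccirc S$ back to $R\bcirc S$ via monotonicity. A secondary subtlety is making sure the "$S^\sms\bcirc R^\sms$ is pseudo-invertible" step is legitimately available: it follows from the first half of the proof applied to the pseudo-invertible representations $S^\sms$ and $R^\sms$, so the argument is not circular provided the two halves are presented in that order.
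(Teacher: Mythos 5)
Your first half (verifying condition d) of Corollary~\ref{char-pseudo} for $R\bcirc S$ by interleaving an intermediate closed neighborhood $W'$ with $C\subset\Int W'\subset W'\subset\Int W$ and then using d) for $S$ and for $R$ in turn) is correct and does yield $(U,W)\in R\ccirc S\subset R\bcirc S$; this is a more hands-on route than the paper, which never touches d) and instead gets everything from the single sandwich
$R\bcirc S=(R^\sms)^\sms\bcirc(S^\sms)^\sms\subset(S^\sms\bcirc R^\sms)^\sms\subset(R\bcirc S)^{\sms\sms}\subset R\bcirc S$,
whose links are forced to be equalities. Your approach costs more bookkeeping but makes the topological content of pseudo-invertibility visible.

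The second half, however, has a genuine gap: you assert that $(-)^\sms$ is antitone, but it is \emph{isotone} --- if $R\subset R'$ then every witness $B\in AR$ with $B\cap\tilde B=\ES$ is also a witness in $AR'$, so $R^\sms\subset R'^\sms$ (the paper states this explicitly: ``the operation $(-)^\sms$ is involutive and isotonic''). Under your antitonicity assumption, the inclusion you extract from $R\bcirc S\subset(S^\sms\bcirc R^\sms)^\sms$ is $S^\sms\bcirc R^\sms\subset(R\bcirc S)^\sms$, which is exactly Lemma~\ref{lem.rs-sr} again; ``combining'' two copies of the same one-sided inclusion does not produce the equality, and the reverse inclusion $(R\bcirc S)^\sms\subset S^\sms\bcirc R^\sms$ is never established. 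The repair is immediate once the monotonicity is corrected: applying the isotone operation $(-)^\sms$ to $R\bcirc S\subset(S^\sms\bcirc R^\sms)^\sms$ gives $(R\bcirc S)^\sms\subset\bigl((S^\sms\bcirc R^\sms)^\sms\bigr)^\sms\subset S^\sms\bcirc R^\sms$, the last step by the proposition $(T^\sms)^\sms\subset T$ applied to the ambiguous representation $T=S^\sms\bcirc R^\sms$ (so you do not even need its pseudo-invertibility here). Together with Lemma~\ref{lem.rs-sr} this yields the stated equality, and is essentially the paper's argument.
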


\begin{proof}
By Lemma~\ref{lem.rs-sr},
$$
R\bcirc S=(R^\sms)^\sms\bcirc (S^\sms)^\sms \subset
(S^\sms\bcirc R^\sms)^\sms\subset (R\bcirc S)^\sms{}^\sms
\subset R\bcirc S,
$$
thus $(R\bcirc S)^{\sms\sms}=R\bcirc S$, and $(R\bcirc
S)^\sms=(S^\sms\bcirc R^\sms)^{\sms\sms}=S^\sms\bcirc R^\sms$.
\end{proof}

Due to Corollary~\ref{char-pseudo} the composition law $\bcirc$ is
associative for pseudo-invertible ambiguous representations. Hence
all \emph{compacta and pseudo-invertible ambiguous representations}
form a category $\Cpamb$. In this category a~pseudo-invertible
ambiguous representation $R\subset\exp X\times\exp Y$ is considered
as an~arrow $X\to Y$, and for an~arrow $S:Y\to Z$, i.e.\ for
a~pseudo-invertible ambiguous representation $S\subset\exp
Y\times\exp Z$, the~composition $S\circ R$ is equal to $R\bcirc S$.
For a compactum $X$, the identity arrow $\uni{X}$ is equal to
$\{(A,B)\in\exp X\times
\exp X\mid A\subset B\}$.

Observe also that $\uni{X}{}^\sms=\uni{X}$, therefore we obtain an
involutive antiisomorphism $(-)^\sms:\Cpamb^{op}\to\Cpamb$, which
preserves objects.

To use the second variant, i.e. to construct a class of relations
that is closed under composition, we add the following requirement:

e) for all $B\in\exp X$ the set $RB=\{A\in\exp X\mid (A,B)\in R\}$
is closed in $\exp X$.

\begin{defn}
A relation $R\subset\exp X\times \exp Y$ that satisfies a)--c) and
e) is called a~\emph{strict ambiguous representation} between $X$
and $Y$.
\end{defn}

Observe that $(\exp X,\subset)$ is a compact Lawson upper
semilattice, hence we can use Lemma~\ref{int-un-cl} to give an
equivalent definition:
\begin{defn}\label{def.sar-short}
A subset $R\subset\exp X\times \exp Y$ is a \emph{strict ambiguous
representation} if $R$ is closed, contains $\exp X\times \{Y\}$,
and $(A,B)\in R$, $A'\in\exp X$, $B'\in\exp Y$, $A'\subset A$,
$B'\supset B$ imply $(A',B')\in R$.
\end{defn}

We denote the set of all strict ambiguous representations between
$X$ and $Y$ by $\Csamb(X,Y)$. The intersection
$\Cpamb(X,Y)\cap\Csamb(X,Y)$ is denoted by $\Cpsamb(X,Y)$.

\begin{stat}
For compacta $X,Y$, a relation $R\subset \exp X\times \exp Y$ is an
ambiguous representation if and only if the correspondence
$A\mapsto AR$ is an antitone mapping from $\exp X$ to $GY$. This
mapping is:

$\bullet$ lower semicontinuous if and only if $R$ is
pseudo-invertible;

$\bullet$ upper semicontinuous if and only if $R$ is strict;

$\bullet$ continuous if and only if $R$ is pseudo-invertible and
strict.
\end{stat}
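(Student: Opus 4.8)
The plan is to establish the basic equivalence first and then treat each of the three bullet points separately, in each case translating the topological condition on the correspondence $A\mapsto AR$ into a closure/neighborhood statement about $R$ and comparing it with the relevant earlier characterization. For the basic equivalence: given an ambiguous representation $R$, condition a) says that $AR$ is an up-set in $\exp Y$ and that $A'\subset A$ implies $A'R\supset AR$ (antitonicity), condition b) says $Y\in AR$ so $AR\neq\ES$, and condition c) says $AR$ is closed; together with Lemma~\ref{int-G} (or the remark after it), these say exactly that $AR$ is an inclusion hyperspace, i.e.\ $AR\in GY$, and that $A\mapsto AR$ is antitone. Conversely, if $A\mapsto AR$ is antitone into $GY$, then reading the same conditions backwards gives a)--c). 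This part is routine bookkeeping.

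For the first bullet, recall that the lower topology on $GY$ has subbasic open sets $U^-$, so $A\mapsto AR$ is lower semicontinuous iff for every $U\subop Y$ the set $\{A\mid AR\subset U^-\}=\{A\mid B\cap U\neq\ES \text{ for all }B\in AR\}$ is open in $\exp X$; equivalently, whenever $B\cap U\neq\ES$ for all $B\in AR$ there is a closed (Vietoris) neighborhood $V$ of $A$ with the same property for all $B'\in VR$. Unwinding, this says: if $U$ meets every $R$-admissible set for $A$, then $U$ meets every $R$-admissible set for every $A'$ in a neighborhood of $A$ — which, after passing to complements and using that $GY$ is closed under the $\perp$ operation, is precisely condition d) of Corollary~\ref{char-pseudo}, i.e.\ pseudo-invertibility. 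The cleanest route here is to use the explicit formula $A(R^\sms)^\sms=\Cl\bigl(\bigcup\{VR\mid V\text{ a closed nbhd of }A\}\bigr)$ from the proof of Corollary~\ref{char-pseudo}: lower semicontinuity of $A\mapsto AR$ at $A$ says exactly that $\bigcap\{\Cl\bigcup_{V\ni A}VR\}=AR$, i.e.\ $(R^\sms)^\sms=R$.

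For the second bullet, the upper topology on $GY$ has subbasic open sets $U^+$, so upper semicontinuity of $A\mapsto AR$ means: whenever some $B\in AR$ satisfies $B\subset U$, there is a Vietoris neighborhood of $A$ all of whose $R$-images also contain a member inside $U$. Using condition e) — equivalently, by Definition~\ref{def.sar-short}, closedness of $R$ together with the monotonicity — one shows this is automatic: $R$ strict $\Leftrightarrow$ $R$ closed $\Leftrightarrow$ (via Lemma~\ref{int-un-cl} applied to the partial order on $\exp X\times\exp Y$) the graph of $A\mapsto AR$ is closed in the relevant sense, which for an antitone map into the compactum $GY$ with the upper topology is the same as upper semicontinuity. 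I would phrase this as: $R$ is strict iff $RB$ is closed for all $B$ iff $\{A\mid B\subset U \text{ for some }B\in AR\}$ is open for all $U$ iff $A\mapsto AR$ is upper semicontinuous, chasing the definitions of $U^+$ and of $RB$. The third bullet is then immediate, since continuity is the conjunction of lower and upper semicontinuity, matching $\Cpamb\cap\Csamb=\Cpsamb$.

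The main obstacle I anticipate is the second bullet: making precise the claim that ``strict'' is equivalent to upper semicontinuity requires care with the two different subbases on $GY$ (the $U^+$ sets) versus the ``pointwise'' closedness condition e), and one must be careful that Lemma~\ref{int-un-cl} is being applied with $(\exp X,\subset)$ playing the role of the lattice $L$ there — so the roles of $X$ and $Y$ (and of the order directions) are swapped relative to how that lemma is stated. Getting the quantifiers right when converting ``$RB$ closed for all $B$'' into an upper-semicontinuity statement about $A\mapsto AR$, rather than about $B\mapsto RB$, is the delicate point; the symmetry $R\leftrightarrow R^\sms$ together with the first bullet can be used as a sanity check but probably not as a shortcut, since strictness is not simply pseudo-invertibility of $R^\sms$.
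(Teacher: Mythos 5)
Your plan --- reduce everything to openness of the preimages of the subbasic sets $U^+$ and $U^-$ on $GY$ --- is the right one (the paper gives no details, saying only that the proof is straightforward), but you have attached these two families to the wrong halves of the Vietoris topology, and as a result both of your key intermediate equivalences are false; your final conclusions come out right only because the two errors cancel. Concretely, $U^-=\{\CCA\in GY\mid A\cap U\ne\ES\text{ for all }A\in\CCA\}$ is the trace on $GY$ of the \emph{upper} Vietoris subbasic set $\langle\,\langle Y,U\rangle\,\rangle$ of $\exp^2Y$ (``$\CCA$ is contained in an open family''), while $U^+$ is the trace of the \emph{lower} subbasic set $\langle \exp Y,\langle U\rangle\rangle$ (``$\CCA$ meets an open family''). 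So upper semicontinuity of $A\mapsto AR$ means openness of the preimages of the sets $U^-$, and lower semicontinuity means openness of the preimages of the sets $U^+$ --- the opposite of what you assert. This matters because the two openness conditions characterize different properties of $R$. Since $AR$ is an inclusion hyperspace, for $U\ne Y$ the complement of $\{A\mid AR\in U^-\}$ is exactly $R(Y\setminus U)$, so openness of all these preimages is precisely condition e) (all sets $RF$ closed), i.e.\ strictness --- not condition d), as claimed in your first bullet. Dually, $\{A\mid AR\in U^+\}=\{A\mid\text{there is }B\in AR,\ B\subset U\}$, and its openness for all $U\subop Y$ is (after squeezing a closed neighborhood $V$ of $B$ inside $U$, and extracting from a basic Vietoris neighborhood $\langle U_1,\dots,U_n\rangle$ of $A$ a closed neighborhood of $A$ lying in it) exactly condition d) of Corollary~\ref{char-pseudo}, i.e.\ pseudo-invertibility --- not strictness, as claimed in your second bullet. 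The two conditions are genuinely inequivalent: the paper's $R_f^\sms$ for a non-open $f$ satisfies d) but not e), while for $X=Y=[0,1]$ the relation $R=\exp X\times\{Y\}\cup\{(\{0\},B)\mid 1\in B\in\exp Y\}$ satisfies e) but not d) (no closed neighborhood of $\{0\}$ is related to $[1/2,1]$). So the chains of ``iff''s you propose to prove cannot be completed as written.

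There are two smaller slips in the same direction. First, your unwinding of lower semicontinuity as ``there is a closed neighborhood $V$ of $A$ with the same property for all $B'\in VR$'' is not equivalent to openness of the preimage: by antitonicity $VR\subset A'R$ for $A'\subset V$, so a property holding for all elements of $VR$ need not hold for all elements of $A'R$; what is required is the property for all $A'$ in a Vietoris neighborhood of $A$. Second, the closed-graph heuristic you invoke for the second bullet (``$R$ closed $\Leftrightarrow$ upper semicontinuity of the set-valued map'') is correct, but it lives on the $U^-$ side: $R$ closed means $A'R$ eventually avoids the closed set $F$ whenever $AR$ does, which is $U^-$-continuity with $U=Y\setminus F$. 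Once the two subbases are swapped back --- $U^-$ with upper/strict via Lemma~\ref{int-un-cl} and Definition~\ref{def.sar-short}, $U^+$ with lower/pseudo-invertible via Corollary~\ref{char-pseudo} --- your outline, including the deduction of the third bullet from the first two, goes through.
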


The proof is straightforward and it uses Lemma~\ref{int-G}.\qed

Since the composition of compact relations is compact, we
immediately obtain:

\begin{stat}
If $R\in\Csamb(X,Y)$ and $S\in\Csamb(Y,Z)$, then $R\ccirc
S\in\Csamb(X,Z)$.
\end{stat}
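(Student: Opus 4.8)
The plan is to verify the four clauses of Definition~\ref{def.sar-short} for $R\ccirc S$, using the corresponding properties of $R$ and $S$. Recall $R\ccirc S=\{(A,C)\in\exp X\times\exp Z\mid \exists B\in\exp Y:(A,B)\in R,(B,C)\in S\}$. First I would dispatch the easy clauses. The monotonicity clause (a) is immediate: if $(A,C)\in R\ccirc S$ via some witness $B$, and $A'\subset A$, $C'\supset C$, then $(A',B)\in R$ by property (a) for $R$, and $(B,C')\in S$ by property (a) for $S$, so $B$ witnesses $(A',C')\in R\ccirc S$. Clause (b) is equally short: for any $A\in\exp X$ we have $(A,Y)\in R$ and $(Y,Z)\in S$, so $Y$ witnesses $(A,Z)\in R\ccirc S$.

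The substance is in the closedness clause. Here I would invoke the remark made just before the statement: ``since the composition of compact relations is compact''. Concretely, $R$ is a closed subset of the compactum $\exp X\times\exp Y$, hence compact; $S$ is compact in $\exp Y\times\exp Z$; their relational composition $R\ccirc S$ is the image of the compact set $\{((A,B),(B,C))\mid (A,B)\in R,(B,C)\in S\}$ (a closed, hence compact, subset of $R\times S$) under the continuous projection $((A,B),(B,C))\mapsto(A,C)$, so it is compact and therefore closed in the Hausdorff space $\exp X\times\exp Z$. This simultaneously gives that for each fixed $A$ the slice $A(R\ccirc S)$ is closed in $\exp Z$ (clause (c)) and that for each fixed $C$ the slice $(R\ccirc S)C$ is closed in $\exp X$ (clause (e)), since slices of a closed set are closed.

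The only point requiring a little care — and the step I expect to be the main (minor) obstacle — is the appeal to compactness of the composition of ``compact relations''. A relation that is closed in a product of compacta is automatically compact, so strictness of $R$ and $S$ (closedness) is exactly what licenses the argument; one must just be sure that the intermediate set $\{((A,B),(B,C))\}$ is closed, which follows because the condition ``second component of the first pair equals first component of the second pair'' is a closed condition in the Hausdorff space $\exp Y$. With closedness, (c), (e) established and (a), (b) checked above, all conditions of Definition~\ref{def.sar-short} (in the form a)--c), e)) hold for $R\ccirc S$, so $R\ccirc S\in\Csamb(X,Z)$.
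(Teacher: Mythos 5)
Your proof is correct and follows exactly the paper's own (one-line) argument: the paper derives the statement from the observation that the composition of compact relations is compact, which is precisely the projection-of-a-closed-set argument you spell out, and closedness together with the easy clauses a) and b) is all that Definition~\ref{def.sar-short} requires. (The paper's subsequent remark gives an alternative elementary verification via filtered collections, but that is explicitly offered only as an independent illustration.)
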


\begin{rem*}
For a reader to observe ambiguous representations "at work", we
also provide an independent proof of c) and d) (properties a), b)
are obviously satisfied).

Let us observe that $A(R\ccirc S)\subset \exp Z$ contains supersets
of all its elements and the entire space $Z$. To prove closedness,
we use Lemma~\ref{int-G}, assume that $\CCF\subset A(R\ccirc S)$ is
a filtered collection, and let $C_0=\bigcap \CCF$. Observe that
$(A,C)\in R\ccirc S$ if and only if $AR\cap SC\ne\ES$. By c)
$AR\subset \exp Y$ is an inclusion hyperspace, and by d) the
collection $\{SC\mid C\in\CCF\}$ is a decreasing net of closed
subsets of $\exp Y$ if $\CCF$ is ordered reverse to inclusion. Each
of $\{SC\mid C\in\CCF\}$ has a non-empty intersection with $AR$,
hence there is $B\in AR\cap\bigcap\{SC\mid C\in\CCF\}$. For any
neighborhood $U\subset C_0$ there is $C\in\CCF$ such that $C\subset
\Cl U$, therefore $(B,C)\in S$ implies $(B,\Cl U)\in S$. Using c)
again, we obtain $C_0\in A(R\ccirc S)$, i.e. c) for $R\ccirc S$.

Let $(A,B)\in\exp X\times\exp Z\setminus R\ccirc S$, then $AR\cap
SC=\ES$. Due to the compactness of $SC$ there are open sets
$U_1,\dots,U_n\subset Y$ such that all $B\in SC$ are contained in
some $U_i$, and no $\Cl U_i$ is an element of $AR$. Then
$A\in\CCV=\exp X\setminus(R\Cl U_1\cup\dots\cup R\Cl U_n)$, and
$\CCV$ is an open neighborhood of $A$ in $\exp X$ such that
$A'\in\CCV$ is incompatible with $(A',C)\in R\ccirc S$. Thus
$(R\ccirc S)C$ is closed. \qed
\end{rem*}

Therefore we can define the~\emph{category of compacta and strict
ambiguous representations} $\Csamb$. Arrows from $X$ to $Y$ in this
category are the~strict ambiguous representations between $X$ and
$Y$, the~composition of $R:X\to Y$ and $S:Y\to Z$ is the~usual
composition $R\ccirc S$ of relations, identity arrows are of the
same form as in $\Cpamb$. For strict ambiguous representations the
compositions $\ccirc$ and $\bcirc$ coincide, hence we can consider
the intersection of $\Cpamb$ and $\Csamb$, which we call
the~\emph{category of compacta and strict pseudo-invertible
ambiguous representations} and denote by $\Cpsamb$.

A drawback of introduction of e) is that the ambiguous
representation $R^\sms$ for $R\in \Csamb(X,Y)$ is not always
strict.

\begin{exam}
Let $f:X\to Y$ be a continuous mapping of compacta, and a relation
$R_f\subset \exp X\times \exp Y$ be defined as $(A,B)\in R_f$ if
and only if $f(A)\subset B$. Then $R_f$ is a pseudo-invertible
strict ambiguous representation, and for all $B\in\exp Y$ we have
\begin{gather*}
BR_f^\sms=
\{A\in\exp X\mid B\in \{f(A)\}^\perp\}^\perp=\\
\{A\in\exp X\mid f(A)\cap B\ne\ES\}^\perp=
\{A\in \exp X\mid A\supset f^{-1}(B)\}.
\end{gather*}

Therefore
$$
R_f^\sms A=\{\tilde B\in\exp Y\mid B\subset Y\setminus f(X\setminus
A)\}.
$$
It is easy to see that the latter set is closed for all closed $A$
if and only if the mapping $f$ is open.
\end{exam}

\begin{defn}
An ambiguous representation $R$ is called an \emph{open ambiguous
representation} if it is strict, pseudo-invertible and $R^\sms$ is
a strict ambiguous representation.
\end{defn}

\begin{rem*}
Here we do not mean that $R$ is an open subset in the product.
\end{rem*}

\begin{stat}
A relation $R\subset\exp X\times \exp Y$ that satisfies a)--e) is
an open ambiguous representation if and only if any of the
following statements is valid:

f) for any open $U\subset X$ the set of $R$-unavoidable sets of all
$A\subset U$, $A\in\exp X$, is open in the Vietoris topology on
$\exp Y$;

f') for all $A\in\exp X$, $A\subset U\subop X$ and $B\in\exp Y$
such that $B\in(AR)^\perp$, there are open sets
$V_1,\dots,V_n\subset Y$ and a~closed neighborhood $G\supset F$ in
$X$ such that $G\subset U$, $V_i\cap B\ne\ES$, $i=1,\dots,n$, and
each $B'\in GR$ contains at least one of $V_i$.
\end{stat}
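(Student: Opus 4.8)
The plan is to route everything through the preceding Proposition, which identifies ambiguous representations with antitone maps $A\mapsto AR\colon\exp X\to GY$ and matches lower/upper semicontinuity of this map with pseudo-invertibility/strictness of $R$. Since $R$ satisfies a)--e) it is already a strict ambiguous representation, so the map $A\mapsto AR$ is upper semicontinuous; hence ``$R$ is an open ambiguous representation'' is equivalent to the conjunction: $A\mapsto AR$ is also lower semicontinuous ($R$ pseudo-invertible) \emph{and} $R^\sms$ is strict (i.e.\ $R^\sms\tilde A\subcl\exp Y$ for every $\tilde A\in\exp X$). I would show f) encodes exactly this, and that f') is the pointwise translation of f).

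The key computation is with $R^\sms$. For $\tilde A\in\exp X$ and $U=X\setminus\tilde A$, unwinding the definition of $R^\sms$ gives
\[
\exp Y\setminus R^\sms\tilde A=\bigcup\{(AR)^\perp\mid A\in\exp X,\ A\subseteq U\},
\]
and the set on the right is exactly the collection of sets that are $R$-unavoidable for some $A\subseteq U$. As $\tilde A$ runs over $\exp X$, $U$ runs over the proper open subsets of $X$ (for $U=X$ the union is just $(XR)^\perp$), so $R^\sms$ is strict precisely when this union is open for every open $U$, which is condition f). To recover pseudo-invertibility from f) I would argue that, once $R^\sms$ is strict, the always-valid inclusion $(R^\sms)^\sms\subseteq R$ becomes an equality: given $(A,B)\in R$ and $\tilde B\in\exp Y$ with $B\cap\tilde B=\ES$, one picks an open $W$ with $B\subseteq W\subseteq\Cl W\subseteq Y\setminus\tilde B$ and, using strictness of $R^\sms$ together with the formula $A(R^\sms)^\sms=\Cl\bigcup\{VR\mid V\text{ a closed neighborhood of }A\}$ from Corollary~\ref{char-pseudo}, shows $A\notin(\tilde B R^\sms)^\perp$, hence $(A,B)\in(R^\sms)^\sms$. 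This yields ``$R$ open $\iff$ f)''.

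It remains to prove ``f) $\iff$ f')''. Both say that the union $\bigcup\{(AR)^\perp\mid A\subseteq U,\ A\in\exp X\}$ is open, read at a point $B$ of it, say $B\in(AR)^\perp$ with $A\subseteq U$. If f') holds, the sets $V_1,\dots,V_n$ and the closed neighborhood $G\subseteq U$ of $A$ it provides make $\langle Y,V_1,\dots,V_n\rangle$ a Vietoris neighborhood of $B$ lying inside the union, because any $C$ in it meets every $V_i$ and hence (each $B'\in GR$ containing some $V_i$) meets every $B'\in GR$, i.e.\ $C\in(GR)^\perp$. Conversely, openness of the union provides a basic neighborhood $\langle W_0,V_1,\dots,V_n\rangle\subseteq\bigcup\{(A'R)^\perp\mid A'\subseteq U\}$ of $B$; from this one must produce a single closed neighborhood $G\subseteq U$ of $A$ and (possibly refined) sets $V_i$ with $V_i\cap B\ne\ES$ such that \emph{every} $B'\in GR$ contains one of them --- this is where compactness of $B$ (to keep the $V_i$ finite), compactness of $AR$, and upper semicontinuity of $A'\mapsto A'R$ (strictness of $R$) all have to be used together to trade the $C$-by-$C$ choice of $A'\subseteq U$ and the constraint $C\subseteq W_0$ for one uniform $G$ with $W_0$ effectively replaced by $Y$.

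I expect this last ``uniformization'' step in f)$\,\Rightarrow\,$f') to be the main obstacle; a secondary delicate point is the net/neighborhood argument pulling pseudo-invertibility of $R$ out of strictness of $R^\sms$. Both are exactly the places where one cannot avoid invoking compactness of the hyperspaces and the upper semicontinuity of $A\mapsto AR$.
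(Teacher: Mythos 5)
Your reduction of openness to a property of $R^\sms$, and the key identity $\exp Y\setminus R^\sms\tilde A=\bigcup\{(AR)^\perp\mid A\in\exp X,\ A\subset X\setminus\tilde A\}$ showing that f) is precisely strictness of $R^\sms$, coincide with the paper's argument and are correct. The genuine flaw is in how you handle pseudo-invertibility. You read the hypothesis ``a)--e)'' as giving only strictness of $R$ and then plan to \emph{derive} pseudo-invertibility from f). But condition d) (introduced in Corollary~\ref{char-pseudo}) is part of the hypothesis, and by that very corollary it is equivalent to $(R^\sms)^\sms=R$; so there is nothing to derive, and the only content of the proposition is ``$R^\sms$ strict $\iff$ f) $\iff$ f')''. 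Worse, the derivation you propose is impossible: f) together with a), b), c), e) does \emph{not} imply pseudo-invertibility. Take $X=[0,1]$, $Y=\{p,q\}$ discrete, and set $AR=\exp Y$ for $A=\{0\}$ and $AR=\{Y\}$ otherwise. This satisfies a), b), c), e), and f) holds because every nonempty open $U\subset X$ contains a nonempty closed $A'\ne\{0\}$, whence $\bigcup\{(A'R)^\perp\mid A'\in\exp X,\ A'\subset U\}\supset\{Y\}^\perp=\exp Y$ is open; yet $\{0\}(R^\sms)^\sms=\Cl\bigl(\bigcup\{VR\mid V\text{ a closed neighborhood of }\{0\}\}\bigr)=\{Y\}\ne\exp Y=\{0\}R$, so $R$ is not pseudo-invertible and hence not open. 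The step you call a ``secondary delicate point'' is therefore not delicate but false; the proof must simply quote d).

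The second gap is that the implication f)$\,\Rightarrow\,$f'), which you correctly identify as the main obstacle, is only announced and never carried out, so this half of the equivalence is missing from your proposal. (The paper does not prove it at this spot either; it defers to Proposition~\ref{coambl-char}. The mechanism there is different from the one you sketch: a strict representation is a closed subset of the product (Definition~\ref{def.sar-short}), so strictness of $R^\sms$ places the point $(B,X\setminus U)$ in the open complement of $R^\sms$, and a single basic product neighborhood of that point supplies the uniform closed neighborhood $G\subset U$ of $A$ together with the sets $V_1,\dots,V_n$, with no need for the compactness-of-$AR$ uniformization you anticipate.) Your easy direction f')$\,\Rightarrow\,$f), via the Vietoris neighborhood $\langle Y,V_1,\dots,V_n\rangle\subset(GR)^\perp$, is correct.
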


\begin{proof}
We need to verify only the closedness of $R^\sms\tilde A$ for all
$\tilde A\in\exp X$. Observe that
\begin{gather*}
R^\sms\tilde A=\{\tilde B\in\exp Y\mid \forall A\in\exp X
\;\;
(\tilde B\in(AR)^\perp
\implies
A\cap\tilde A\ne\ES)
\}=
\\
\{\tilde B\in\exp Y\mid
\forall A\in\exp X
\;\;
(A\cap \tilde A=\ES
\implies
\exists B\in AR
\;\;
B\cap \tilde B=\ES)
\}.
\end{gather*}

Hence
$$
\exp Y\setminus R^\sms\tilde A=
\\
\bigcup\{(AR)^\perp\mid A\in\exp X, A\subset X\setminus \tilde A\}.
$$
The latter set is the complement to the set of all $R$-unavoidable
sets for all $A\subcl X$, $A\subset U=X\setminus \tilde A$. Thus f)
is equivalent to the closedness of $R^\sms\tilde A$ for all $\tilde
B\in\exp A$.

The equivalence of f) and f') is a particular case of
Proposition~\ref{coambl-char}, which will be proved in the next
section.
\end{proof}

\begin{cons}\label{open-comp}
The class of open ambiguous representations is closed under
composition.
\end{cons}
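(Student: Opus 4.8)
The plan is to show that if $R\in\Coamb(X,Y)$ and $S\in\Coamb(Y,Z)$, then $R\ccirc S\in\Coamb(X,Z)$, by checking the three defining requirements: that $R\ccirc S$ is a strict ambiguous representation, that it is pseudo-invertible, and that $(R\ccirc S)^\sms$ is again strict. The first of these is immediate from the earlier statement that the composition of strict ambiguous representations is strict (the composition of compact relations being compact), so $R\ccirc S\in\Csamb(X,Z)$. For strict ambiguous representations the compositions $\ccirc$ and $\bcirc$ coincide, so $R\ccirc S=R\bcirc S$, and I may freely switch between the two notations in what follows.

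For pseudo-invertibility, I would invoke Proposition~\ref{cpamb-comp}: since $R,S$ are in particular pseudo-invertible, $R\bcirc S$ is pseudo-invertible and, moreover, $(R\bcirc S)^\sms=S^\sms\bcirc R^\sms$. This identity is the crux: it reduces the remaining task --- showing $(R\ccirc S)^\sms=(R\bcirc S)^\sms$ is strict --- to showing that $S^\sms\bcirc R^\sms$ is strict. Now because $R$ and $S$ are open, $R^\sms\in\Csamb(Y,X)$ and $S^\sms\in\Csamb(Z,Y)$ by the definition of an open ambiguous representation. Hence $S^\sms$ and $R^\sms$ are strict ambiguous representations (between $Z,Y$ and $Y,X$ respectively), and applying once more the fact that a composition of strict ambiguous representations is strict, $S^\sms\bcirc R^\sms=S^\sms\ccirc R^\sms\in\Csamb(Z,X)$. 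Therefore $(R\ccirc S)^\sms$ is strict, which is exactly the third requirement.

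Putting the pieces together: $R\ccirc S$ satisfies a)--e) (it is strict), it is pseudo-invertible by Proposition~\ref{cpamb-comp}, and its pseudo-inverse $(R\ccirc S)^\sms=S^\sms\ccirc R^\sms$ is strict because it is a composition of strict ambiguous representations $S^\sms\in\Csamb(Z,Y)$ and $R^\sms\in\Csamb(Y,X)$. Thus $R\ccirc S$ is an open ambiguous representation, and since $X,Y,Z$ were arbitrary, the class of open ambiguous representations is closed under composition.

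The genuinely load-bearing step is the identity $(R\bcirc S)^\sms=S^\sms\bcirc R^\sms$ from Proposition~\ref{cpamb-comp}; everything else is bookkeeping about which of the properties a)--e) survive composition, together with the already-established closure of $\Csamb$ under $\ccirc$. I would not expect any real obstacle here, provided one is careful that "open" has been defined precisely as "strict, pseudo-invertible, and with strict pseudo-inverse", so that all three hypotheses are available for both $R$ and $S$ and all three conclusions need to be verified for $R\ccirc S$. One small point worth stating explicitly in the write-up is why $\ccirc$ and $\bcirc$ agree on the relevant relations: this holds for strict ambiguous representations, and both $R\ccirc S$ and $S^\sms\ccirc R^\sms$ are strict, so the coincidence applies wherever it is invoked.
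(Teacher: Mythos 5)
Your proof is correct and follows essentially the route the paper intends: the paper leaves Corollary~\ref{open-comp} without an explicit argument, but its proof of the $L$-fuzzy analogue (Proposition~\ref{coambl-comp}) is exactly your reduction via Proposition~\ref{cpamb-comp} to the identity $(R\bcirc S)^\sms=S^\sms\bcirc R^\sms$ together with closure of $\Csamb$ under $\ccirc$. Your explicit remark on why $\ccirc$ and $\bcirc$ coincide on the relevant strict representations is a welcome clarification rather than a deviation.
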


Thus we obtain the \emph{category of compacta and open ambiguous
representations} $\Coamb$ that is a subcategory of $\Cpamb$, and
the restriction of the contravariant functor $(-)^\sms$ to $\Coamb$
is also an involutive antiisomorphism, which preserves objects.

Now we consider order properties of the sets of ambiguous
representations. Observe that $\Camb(X,Y)$ is a lattice when
ordered by inclusion, i.e. $R$ precedes $S$ if $R\subset S$. For
$R,S\in\Camb(X,Y)$ the meet of $R,S$ is equal to $R\cap S$, and
their join is equal to $R\cup S$. The subsets $\Csamb(X,Y)$ and
$\Cpamb(X,Y)$ (and therefore $\Cpsamb(X,Y)$) are sublattices of
$\Camb(X,Y)$. To prove a similar fact about $\Coamb(X,Y)$, we need
the following statement.

\begin{stat}\label{coamb-cap-cup}
If $R,S\in\Coamb(X,Y)$, then $R\cap S, R\cup S\in\Coamb(X,Y)$.
\end{stat}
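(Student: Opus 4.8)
The plan is to use the characterization of open ambiguous representations via properties a)--f), together with the already-established facts that $\Cpamb(X,Y)$ and $\Csamb(X,Y)$ are sublattices of $\Camb(X,Y)$. Since an open ambiguous representation is precisely a relation satisfying a)--e) together with f), and since $R\cap S$ and $R\cup S$ already lie in $\Cpsamb(X,Y)$ whenever $R,S$ do (so a)--e) are automatic), the whole task reduces to verifying property f) for $R\cap S$ and for $R\cup S$. Recall that f) says: for every open $U\subset X$, the set $\CCW_R(U)$ of all $R$-unavoidable sets for all $A\in\exp X$ with $A\subset U$ is open in the Vietoris topology on $\exp Y$.

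First I would treat the union. From the definition of unavoidability, $C$ is $(R\cup S)$-unavoidable for $A$ iff $C\cap B\ne\ES$ for all $B\in A(R\cup S)=AR\cup AS$, which holds iff $C$ is both $R$-unavoidable and $S$-unavoidable for $A$. Hence $\CCW_{R\cup S}(U)=\CCW_R(U)\cap\CCW_S(U)$, which is open as an intersection of two open sets; so f) holds for $R\cup S$. This is the easy half.

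The intersection is the genuinely delicate case, and I expect it to be the main obstacle. Here $A(R\cap S)=AR\cap AS$, and a set $C$ is $(R\cap S)$-unavoidable for $A$ iff $C\in(AR\cap AS)^\perp$. The subtlety is that $(AR\cap AS)^\perp$ is in general strictly larger than $(AR)^\perp\cup(AS)^\perp$: an inclusion hyperspace that meets every member of $AR\cap AS$ need not meet every member of $AR$ nor every member of $AS$. So one cannot simply write $\CCW_{R\cap S}(U)$ as a union of the two. Instead I would argue directly with property f'), which the preceding Proposition shows is equivalent to f): given $A\subset U\subop X$ and $B\in(A(R\cap S))^\perp=(AR\cap AS)^\perp$, I must produce open sets $V_1,\dots,V_n\subset Y$ meeting $B$ and a closed neighborhood $G\subset U$ of $A$ such that every member of $G(R\cap S)=GR\cap GS$ contains some $V_i$. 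The point is to feed the "pieces" of $B$ into f') for $R$ and for $S$ separately. Since $B$ meets every element of $AR\cap AS$, one shows — using that $AR$, $AS$ are inclusion hyperspaces and $(AR\cap AS)^\perp=((AR)^\perp\vee(AS)^\perp)$ in the lattice $GY$ of inclusion hyperspaces, i.e. $AR\cap AS$ is the meet — that $B$ can be covered by finitely many open sets each of which "witnesses unavoidability" for $AR$ or for $AS$ in the sense of f'); applying f') for $R$ and for $S$ to these witnesses yields finitely many open families and closed neighborhoods $G_R,G_S\subset U$ of $A$, and taking $G=G_R\cap G_S$ and the union of the two open families gives the required data. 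The technical heart is the lemma that $(AR\cap AS)^\perp$ is generated (as an inclusion hyperspace, via finite intersections of members combined with supersets) by $(AR)^\perp$ and $(AS)^\perp$ together with a compactness argument on $B$; this is where one must be careful, since it is exactly the place where the crisp lattice structure of $GY$ under traversal is used. Once f') is checked for $R\cap S$, the Proposition gives f), and hence $R\cap S\in\Coamb(X,Y)$, completing the proof. \qed
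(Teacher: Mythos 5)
You have the difficulty of this proposition exactly backwards, and that causes problems in both halves. The case you call ``genuinely delicate'', namely $R\cap S$, is in fact the trivial one: since $AR$ and $AS$ are inclusion hyperspaces (closed under passing to closed supersets), the equality $(AR\cap AS)^\perp=(AR)^\perp\cup(AS)^\perp$ holds exactly --- if $C$ misses some $B_1\in AR$ and some $B_2\in AS$, then it misses $B_1\cup B_2\in AR\cap AS$. So your claim that $(AR\cap AS)^\perp$ is ``in general strictly larger'' than $(AR)^\perp\cup(AS)^\perp$ is false, and it even contradicts your own parenthetical remark that $(AR\cap AS)^\perp$ is the join of $(AR)^\perp$ and $(AS)^\perp$ in $GY$, since that join is just their union. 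Consequently the set of $R\cap S$-unavoidable sets for all $A\subset U$ is
$\bigcup_{A\subset U}\bigl((AR)^\perp\cup(AS)^\perp\bigr)=\CCW_R(U)\cup\CCW_S(U)$,
open as a union of two open sets; the whole f$'$)-plus-compactness machinery you sketch is unnecessary, and as written that sketch is in any case too vague to count as a proof. This one-line union argument is precisely how the paper disposes of $R\cap S$.

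The half you call easy, $R\cup S$, is where the only real work lies, and your argument for it has a gap. The pointwise equivalence (``$C$ is $R\cup S$-unavoidable for $A$ iff it is both $R$- and $S$-unavoidable for $A$'') is correct, but the step ``Hence $\CCW_{R\cup S}(U)=\CCW_R(U)\cap\CCW_S(U)$'' does not follow from it: $\CCW_{R\cup S}(U)=\bigcup_{A\subset U}\bigl((AR)^\perp\cap(AS)^\perp\bigr)$, and a union of intersections need not equal the intersection of the unions --- the witness $A_1$ for $R$-unavoidability of a given $C$ and the witness $A_2$ for its $S$-unavoidability may be different sets. The identity is nevertheless true, but to prove it you must use that $A\mapsto AR$ is antitone: if $C\in(A_1R)^\perp\cap(A_2S)^\perp$ with $A_1,A_2\subset U$, then $C\in(A'R)^\perp\cap(A'S)^\perp$ for the single closed set $A'=A_1\cup A_2\subset U$. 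This union-of-witnesses step is exactly the substance of the paper's proof of the $R\cup S$ case (there phrased via openness of $\CCW_R(U)$ and $\CCW_S(U)$ to get a common Vietoris neighborhood); without it your ``easy half'' is incomplete.
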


\begin{proof}
Only f) has to be checked for $R\cap S$, $R\cup S$. It is easy to
see that $C\in\exp Y$ is $R\cap S$-unavoidable for $A\in \exp X$ if
and only if $C$ is either $R$-unavoidable or $S$-unavoidable.
Therefore for any open $U\subset X$ the set of $R\cap
S$-unavoidable sets of all $A\subset U$, $A\in\exp X$, is equal to
the union of the sets of all $R$-unavoidable and of all
$S$-unavoidable sets for all $A\subset U$, $A\in\exp X$, thus it is
open in the Vietoris topology on $\exp Y$.

Similarly, $C\in\exp Y$ is $R\cup S$-unavoidable for $A\in \exp X$
if and only if $C$ is $R$-unavoidable and $S$-unavoidable. Let
$U\subop X$, $A\in\exp X$ and $B\in\exp Y$ be such that $A\subset
U$ and $B$ is $R\cup S$-unavoidable for $A$. By assumption there is
a neighborhood $OC\ni C$ in $\exp Y$ such that each $C'\in OC$ is
$R$-unavoidable for some $A_1\subset U$ and $S$-unavoidable for
some $A_2\subset U$, $A_1,A_2\in\exp X$. Then such $C'$ is $R\cap
S$-unavoidable for $A'=A_1\cup A_2\subset U$, which completes the
proof.
\end{proof}

\begin{cons}
The set $\Coamb(X,Y)$ is a sublattice of $\Cpsamb(X,Y)$.
\end{cons}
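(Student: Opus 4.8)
The claim is that $\Coamb(X,Y)$ is a sublattice of $\Cpsamb(X,Y)$. Two things need to be established: first, that $\Coamb(X,Y) \subset \Cpsamb(X,Y)$; second, that $\Coamb(X,Y)$ is closed under the lattice operations $R \cap S$ (meet) and $R \cup S$ (join) inherited from $\Camb(X,Y)$.

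For the inclusion $\Coamb(X,Y) \subset \Cpsamb(X,Y)$, I would simply unwind the definitions. An open ambiguous representation is by definition strict and pseudo-invertible, with $R^\sms$ strict; in particular it is strict and pseudo-invertible, so it lies in $\Cpamb(X,Y) \cap \Csamb(X,Y) = \Cpsamb(X,Y)$. This is immediate from the definitions and requires no work.

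The substantive part is closure under $\cap$ and $\cup$, and here the plan is to invoke the results already assembled in the excerpt. By Proposition~\ref{coamb-cap-cup}, if $R, S \in \Coamb(X,Y)$ then $R \cap S$ and $R \cup S$ again lie in $\Coamb(X,Y)$. Combined with the first paragraph, this shows that $\Coamb(X,Y)$ is a subset of $\Cpsamb(X,Y)$ that is closed under the meet and join of $\Camb(X,Y)$; since $\Cpsamb(X,Y)$ is itself a sublattice of $\Camb(X,Y)$ (noted earlier in the text), the meet and join computed in $\Cpsamb(X,Y)$ agree with those in $\Camb(X,Y)$, and hence $\Coamb(X,Y)$ inherits a lattice structure as a sublattice. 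So the corollary is essentially just the conjunction of Proposition~\ref{coamb-cap-cup} with the definition of $\Coamb$.

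Given that all the real content has already been discharged in Proposition~\ref{coamb-cap-cup}, there is no genuine obstacle here; the only thing to be careful about is the bookkeeping of which ambient lattice the operations are taken in — one should note explicitly that $R \cap S$ and $R \cup S$ coincide whether computed in $\Camb(X,Y)$, in $\Cpsamb(X,Y)$, or within $\Coamb(X,Y)$, so that ``sublattice'' is meant in the strict sense. This is routine. The proof will therefore be a one- or two-line deduction, essentially: combine the definition of open ambiguous representation with Proposition~\ref{coamb-cap-cup}.
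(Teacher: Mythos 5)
Your argument is correct and coincides with the paper's: the corollary is stated there as an immediate consequence of Proposition~\ref{coamb-cap-cup}, with the inclusion $\Coamb(X,Y)\subset\Cpsamb(X,Y)$ being definitional and the meet and join in all these posets being the set-theoretic $\cap$ and $\cup$ inherited from $\Camb(X,Y)$. Nothing is missing.
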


This statement can also be derived from:
\begin{stat}
Let $R,S\in\Camb(X,Y)$, then $(R\cup S)^\sms=R^\sms\cup S^\sms$,
$(R\cap S)^\sms=R^\sms\cap S^\sms$.
\end{stat}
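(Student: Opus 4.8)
The plan is to verify both identities sectionwise, using the formula $\tilde B R^\sms=\{A\in\exp X\mid \tilde B\in(AR)^\perp\}^\perp$ recorded just before the definition of pseudo-inverse. Since a relation is determined by its sections $\tilde BT=\{\tilde A\mid(\tilde B,\tilde A)\in T\}$, and sections distribute over $\cap$ and $\cup$ of relations, it suffices to show, for every $\tilde B\in\exp Y$, that $\tilde B(R\cup S)^\sms=\tilde B R^\sms\cup\tilde B S^\sms$ and $\tilde B(R\cap S)^\sms=\tilde B R^\sms\cap\tilde B S^\sms$. I would abbreviate $\CCU_R(\tilde B)=\{A\in\exp X\mid \tilde B\in(AR)^\perp\}$, the set of those $A$ for which $\tilde B$ is $R$-unavoidable, so that $\tilde B R^\sms=\CCU_R(\tilde B)^\perp$.

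The key tool is a De Morgan law for traversals: if $\CCA,\CCB\subset\exp Z$ each contain every closed superset of any of their elements (for instance, if both are inclusion hyperspaces), then $(\CCA\cap\CCB)^\perp=\CCA^\perp\cup\CCB^\perp$. The inclusion $\supset$ is immediate from antitonicity of $(-)^\perp$ together with $\CCA\cap\CCB\subset\CCA,\CCB$. For $\subset$: if $C\in\exp Z$ is not in $\CCA^\perp\cup\CCB^\perp$, choose $A\in\CCA$ and $B\in\CCB$ each disjoint from $C$; then $A\cup B$ lies in both $\CCA$ and $\CCB$, hence in $\CCA\cap\CCB$, and is disjoint from $C$, so $C\notin(\CCA\cap\CCB)^\perp$.

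For the union identity, $A(R\cup S)=AR\cup AS$ gives $(A(R\cup S))^\perp=(AR)^\perp\cap(AS)^\perp$, so $\CCU_{R\cup S}(\tilde B)=\CCU_R(\tilde B)\cap\CCU_S(\tilde B)$; and since $A\mapsto AR$ is antitone (property~a), each $\CCU_R(\tilde B)$ is upward closed in $(\exp X,\subset)$. Applying the De Morgan law in $\exp X$ yields $\tilde B(R\cup S)^\sms=(\CCU_R(\tilde B)\cap\CCU_S(\tilde B))^\perp=\CCU_R(\tilde B)^\perp\cup\CCU_S(\tilde B)^\perp=\tilde B R^\sms\cup\tilde B S^\sms$. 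For the intersection identity, $A(R\cap S)=AR\cap AS$, and since $AR,AS\in GY$ are inclusion hyperspaces, the De Morgan law in $\exp Y$ gives $(AR\cap AS)^\perp=(AR)^\perp\cup(AS)^\perp$; hence $\CCU_{R\cap S}(\tilde B)=\CCU_R(\tilde B)\cup\CCU_S(\tilde B)$, and therefore $\tilde B(R\cap S)^\sms=(\CCU_R(\tilde B)\cup\CCU_S(\tilde B))^\perp=\CCU_R(\tilde B)^\perp\cap\CCU_S(\tilde B)^\perp=\tilde B R^\sms\cap\tilde B S^\sms$, using the elementary $(\CCA\cup\CCB)^\perp=\CCA^\perp\cap\CCB^\perp$.

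The only step that is not pure bookkeeping is the De Morgan law, and the point one must not overlook there is that $\CCA$ and $\CCB$ be closed under passing to closed supersets — this is exactly what legitimizes replacing the pair $A,B$ by $A\cup B$. In the application to the intersection identity this is the content of property~c) together with the remark after Lemma~\ref{int-G} (which identifies the sections $AR$ with inclusion hyperspaces); in the application to the union identity it is monotonicity, property~a). Everything else is the routine interplay between sections of relations, the operations $\cap$, $\cup$, and the traversal $(-)^\perp$.
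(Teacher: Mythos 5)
Your proof is correct. The paper itself gives no argument for this crisp statement beyond ``straightforward, see the more general Proposition~\ref{lor-land-sms}'', and in substance your route is the crisp specialization of that general proof: the one nontrivial step there (producing a single witness $A'$, resp.\ $B'=B_1\cup B_2$, by taking unions and invoking closure under closed supersets) is exactly what your De Morgan law $(\CCA\cap\CCB)^\perp=\CCA^\perp\cup\CCB^\perp$ packages, while all the $\ll$/compactness machinery of the $L$-fuzzy version collapses when $L=\{0,1\}$. Isolating that law as a stand-alone lemma, applied once in $\exp Y$ (to the inclusion hyperspaces $AR$, $AS$) and once in $\exp X$ (to the upward-closed families of sets for which $\tilde B$ is unavoidable), is a clean way to organize the computation, and your verification of its two inclusions is complete. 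One cosmetic remark: for the intersection identity you attribute the superset-closure of $AR$ to property~c) plus the remark after Lemma~\ref{int-G}; the closedness in c) is not actually needed for the De Morgan step --- monotonicity a) (together with b) for non-emptiness) already gives closure under closed supersets, just as in your union case. This does not affect the argument.
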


The proof is straightforward, see also more general
Proposition~\ref{lor-land-sms}.\qed

The top and the bottom elements in the~posets $\Camb(X,Y)$, $\Csamb(X,Y)$,
and $\Cpamb(X,Y)$ are determined by the equalities:
$$
\top_{X,Y}=\exp X\times \exp Y, \;\; \bot_{X,Y}=\exp X\times
\{Y\}.
$$
Observe that $\top_{X,Y},\bot_{X,Y}$ are not always in
$\Coamb(X,Y)$.
\begin{que}
For which compacta $X,Y$ there are top and bottom elements in
$\Coamb(X,Y)$?
\end{que}
The answer is trivially positive for finite compacta.

Observe that all subsets of $\exp X\times \exp Y$ that satisfy the
definition \ref{def.sar-short} form a closed subsemilattice of the
compact Lawson upper semilattice $\exp(\exp X\times \exp Y)$, thus:

\begin{stat}
The set $\Csamb(X,Y)$ is a compact Lawson upper semilattice.
\end{stat}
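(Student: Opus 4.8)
The plan is to show that $\Csamb(X,Y)$, as a subset of the compact Lawson upper semilattice $\exp(\exp X\times\exp Y)$, is a closed subsemilattice; then standard facts (a closed subsemilattice of a compact Lawson upper semilattice is again a compact Lawson upper semilattice) give the conclusion. By Definition~\ref{def.sar-short}, a subset $R\subset\exp X\times\exp Y$ lies in $\Csamb(X,Y)$ if and only if it satisfies three kinds of conditions: (i) $R$ is closed in the product; (ii) $R\supset\exp X\times\{Y\}$; and (iii) $R$ is ``monotone'' in the sense that $(A,B)\in R$, $A'\subset A$, $B'\supset B$ imply $(A',B')\in R$.

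First I would identify $R$ with the corresponding point $\{R\}\in\exp(\exp X\times\exp Y)$ (using that the nonempty closed subsets of $\exp X\times\exp Y$ satisfying (i)--(iii) are exactly the points of this hyperspace that belong to a certain subset). The union of two strict ambiguous representations is again one: condition (ii) is preserved under unions since each summand already contains $\exp X\times\{Y\}$; condition (iii) is preserved because the monotonicity closure condition is of the form ``if a generating pair is present then certain other pairs are present'', which is stable under unions; and a \emph{finite} union of closed sets is closed, giving (i). Hence $\Csamb(X,Y)$ is closed under the binary join operation $\cup$ of the ambient semilattice, i.e.\ it is a subsemilattice.

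Next I would verify that $\Csamb(X,Y)$ is a \emph{closed} subset of $\exp(\exp X\times\exp Y)$. Conditions (ii) and (iii) are each ``closed conditions'' on points of the hyperspace: membership of a fixed point (here, membership of each pair in $\exp X\times\{Y\}$) is a closed condition in the Vietoris topology, and (iii) can be rephrased as requiring that the closed set $R$ be disjoint from a fixed family of open ``forbidden configurations'' of the form ``contains $(A,B)$ but misses $(A',B')$'', each of which defines an open subset of the hyperspace, so the set where \emph{no} such violation occurs is closed. Condition (i) is automatic once we work inside $\exp(\exp X\times\exp Y)$, whose points are by definition closed sets. Intersecting these closed subsets of the hyperspace shows $\Csamb(X,Y)$ is closed, hence compact.

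Finally, I would invoke the general principle stated in the preliminaries about Lawson semilattices: a closed subsemilattice of a compact Hausdorff Lawson upper semilattice is itself a compact Hausdorff Lawson upper semilattice (the local bases of subsemilattices restrict, and suprema of nonempty closed subsets exist and depend continuously on the set). Applying this to the closed subsemilattice $\Csamb(X,Y)\subset\exp(\exp X\times\exp Y)$ finishes the proof. The main obstacle I anticipate is the careful bookkeeping in the previous paragraph: making precise that the monotonicity condition (iii) carves out a \emph{closed} subset of the hyperspace, which amounts to writing the set of ``violating'' $R$'s as a union over pairs $(A,B)$, $(A',B')$ with $A'\subset A$, $B'\supset B$ of the sets $\{R\mid (A,B)\in R,\ (A',B')\notin R\}$ and checking each such set is open in the Vietoris topology; this is routine but needs the explicit Vietoris subbase, and one should note it is where closedness of the product space is genuinely used.
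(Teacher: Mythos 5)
Your strategy is exactly the paper's own: the paper simply asserts, without further proof, that the sets satisfying Definition~\ref{def.sar-short} form a closed subsemilattice of $\exp(\exp X\times\exp Y)$ and concludes from there. Your treatment of unions, of condition (ii), and of the fact that a closed subsemilattice of a compact Lawson upper semilattice is again one, are all fine. However, the step you yourself identify as the crux fails as stated. For a fixed point $z=(A,B)$ of the compactum $Z=\exp X\times\exp Y$, the set $\{R\in\exp Z\mid z\in R\}$ is \emph{closed} in the Vietoris topology (its complement is $\langle Z\setminus\{z\}\rangle$), and it is genuinely not open: a closed set containing $z$ is approximated by closed sets missing $z$. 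Hence each $\{R\mid (A,B)\in R,\ (A',B')\notin R\}$ is only the intersection of a closed set with an open set, and your union of these over all admissible pairs has not been exhibited as open.

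The set of $R$ violating the monotonicity condition (iii) \emph{is} open, but showing this requires perturbing the witness pair together with $R$. Order $Z$ by $(A,B)\preceq(A',B')$ iff $A'\subset A$ and $B\subset B'$. The correspondence $z\mapsto\{z\}\ups$ is lower semicontinuous: if $(A',B')\succeq(A,B)$ and $\langle V_1,\dots,V_n\rangle$ is a basic Vietoris neighbourhood of $A'$, then every $A''$ meeting each $V_i$ contains the finite set $\{x_1,\dots,x_n\}$ with $x_i\in A''\cap V_i$, which lies in $\langle V_1,\dots,V_n\rangle$ and is a subset of $A''$; and every $B''$ close to $B$ gives $B''\cup B'$ close to $B'$ and containing $B''$. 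Now if $R$ violates (iii) via $z\in R$, $z'\succeq z$, $z'\notin R$, choose $O\subop Z$ with $z'\in O$ and $\Cl O\cap R=\ES$, and an open $W\ni z$ with $\{w\}\ups\cap O\ne\ES$ for all $w\in W$; then $\langle Z,W\rangle\cap\langle Z\setminus\Cl O\rangle$ is an open neighbourhood of $R$ in $\exp Z$ consisting entirely of violators. (Alternatively one can argue with filtered nets in the spirit of Lemmas~\ref{int-G} and~\ref{int-un-cl}.) With this repair your argument is complete and coincides with the paper's intended one.
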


Observe that for non-finite $X,Y$ the lattice $\Csamb(X,Y)$ is not
topological, for meet (=intersection) is not continuous in general
w.r.t.\ the Vietoris topology.

\begin{que}
What are topological properties of the subsets $\Cpsamb(X,Y)$,
$\Coamb(X,Y)$ of $\Csamb(X,Y)$?
\end{que}

For the operation $(-)^\sms$ is involutive and isotonic, it
provides isomorphisms of lattices $\Cpamb(X,Y)\cong\Cpamb(Y,X)$ and
$\Coamb(X,Y)\cong\Coamb(Y,X)$ for all compacta $X,Y$.

Recall that an \emph{allegory} \cite{Winter:CatApprToLFuzzyRel:07}
is a category $\CCC$ in which:

1) for all objects $X,Y$ each set $\CCC(X,Y)$ is a lower
semilattice; we denote its meet (also called \emph{intersection} in
this case) and order by $\land$ and $\prec$, respectively;

2) for all objects $X,Y$ there is a monotonic operation $(-)^\sms$
(\emph{anti-involution} or \emph{converse} operation), that takes
every morphism $f:X\to Y$ to a morphism $f^\sms:Y\to X$, such that
$(f^\sms)^\sms=f$ and $(f\circ g)^\sms=g^\sms\circ f^\sms$,
provided that $f\circ g$ exists;

3) composition is monotone in both arguments; and

4) the \emph{modular law} holds: if $f:X\to Y$, $g:Y\to Z$, $h:X\to
Z$, then $g\circ f\land h\prec (g\land h\circ f^\sms)\circ f$.

We can see that $\Cpamb$ and $\Coamb$ with the defined operation
$(-)^\sms$ satisfy all the requirements of the definition of
allegory but the last (which is not too surprising, because we
generalize mappings rather than relations). Moreover, composition
is distributive over meet and join in the both arguments.



\section{$L$-ambiguous representations}

In the sequel $L$ will be a compact Lawson lattice, $0$ and $1$ the
bottom and the top elements of $L$. To fit into a~described in
the~introductory section $L$-fuzzy framework, we consider
an~operation $*:L\times L\to L$, which is associative, commutative,
isotone in the both arguments and $1$ is a neutral element for
"$*$". We demand that $*$ be lower semicontinuous and distributive
w.r.t. $\lor$ in the both arguments. Due to compactness this
implies infinite distributive laws. Sometimes we shall need also
the~upper semicontinuity, i.e.\ the~continuity of~$*$. The simplest
such "$*$" is the lattice meet "$\land$". See also
\cite{Dross:GenTNormStr:99} for more information on such
operations.

For each subset $R\subset X\times Y\times L$ (not only for
a~subgraph of an $L$-relation) and $\alpha\in L$, we define the
$\alpha$-\emph{cut} $R_\alpha$ as follows
$$
R_\alpha =
\{(a,b)\in X\times Y\mid (a,b,\alpha)\in R\}.
$$
For subsets $A\subset X$, $B\subset Y$ we put
\begin{gather*}
 AR=\{(y,\alpha)\in Y\times L\mid \text{there is }x\in A
\text{ such that }(x,y,\alpha)\in R\},
\\
RB=\{(x,\alpha)\in X\times L\mid \text{there is }y\in B
\text{ such that }(x,y,\alpha)\in R\}.
\end{gather*}

\begin{defn}
Let $X,Y$ be compacta, $L$ be a compact Lawson lattice. A subset
$R\subset \exp X\times \exp Y\times L$ is called an
$L$-\emph{ambiguous representation} between $X$ and $Y$ if:

a) if $A,A'\in\exp X$, $B,B'\in \exp Y$, $\alpha,\alpha'\in L$,
$A'\subset A$, $B\subset B'$, $\alpha\ge\alpha'$, then
$(A,B,\alpha)\in R$ implies $(A',B',\alpha')\in R$;

b) if $A\in\exp X$, $B\in\exp Y$, $\alpha,\beta\in L$ are such that
$(A,B,\alpha),(A,B,\beta)\in R$, then $(A,B,\alpha\lor\beta)
\in R$;

c) $(A,Y,\alpha),(A,B,0)\in R$ for all $A\in\exp X$, $B\in\exp Y$,
$\alpha\in L$; and

d) for all $A\in\exp X$ the set $AR=\{(B,\alpha)\in\exp Y\times
L\mid (A,B,\alpha)\in R\}$ is closed in $\exp Y\times L$.
\end{defn}

It is equivalent to $R$ being a~subgraph of an $L$-fuzzy binary
relation between $\exp X$ and $\exp Y$ (denoted by the~same letter
$R$ for brevity) such that:

a') $R$ is antitone in the first argument;

b') $R$ is isotone and upper semicontinuous (i.e.\ preserves
filtered infima) in the second argument;

c') $R(A,Y)=1$ for all $A\in \exp X$.

The~mapping $R:\exp X\times\exp Y\to L$ is uniquely recovered by
the formula $R(A,B)=\max\{\alpha\in L\mid (A,B,\alpha)\in R\}$.
The~value $R(A,B)\in L$ is interpreted as a~degree which shows how
well $A$ can represent $B$ (the~more, the~better). If
$(A,B,\alpha)\in R$, then $A$ represents $B$ with fitness at~least
$\alpha$. We shall interchange the~relational and the~functional
interpretations of $L$-ambiguous representations, whatever is more
convenient in a~particular case.

\begin{exam}
Let $(X,d)$ be a~metric compactum and $L=[0,1]$. We put
\begin{multline*}
R(A,B)= 1-\sup\{d(a,B)\mid a\in A\}/\diam X,
\;
A\in\exp X,B\in\exp Y.
\end{multline*}
Then $R(A,B)\ge\alpha$ if and only if $A$ extends beyond $B$ by no
more that $\delta=(1-\alpha)\diam X$. It means that $A$ can be
obtained from a~closed subset $B_0\subset B$ by ``shifts'' of its
points by $\le\delta$ in different (probably multiple) directions.
\end{exam}

\begin{exam}
Let $X\subset Y$ be compact subsets of $\BBR^2$, $\diam Y=r$,
$L=[0,r]$. For $A\in\exp Y$, $B\in \exp Y$, we put
$$
R(A,B)=r-\inf\{\|\vec m\|\mid
\vec m\in\BBR^2,A+\vec m\subset B\},
$$
assuming $\inf\ES=r$. Then $R(A,B)\ge\alpha>0$ iff $A$ can be
shifted in \emph{one} direction by a~distance $\le r-\alpha$ to
coincide with a~subset of $B$.
\end{exam}

The~two latter examples implement a~common idea: $R(A,B)$ shows how
well a~subset $A$ represents a~part of an~image of $B$. Of course, we
can combine shifts, expansions, rotations etc, depending on which
distortions of images are expected. It is also not necessary that
$X$ and $Y$ be of equal dimension. Assume, e.g., that $Y$ is
an~area in atmosphere, and $X$ is a~finite set of points on
the~earth surface where automatic registration devices are
installed. If there is a~snow cloud somewhere in $Y$, and $A\subset
X$ is a~(probably incomplete) set of points where snowfall is
observed, then it is possible to define a~function $R(A,B)$, which
will estimate the~likelihood that the cloud is contained in
a~subset $B\subset Y$. Such problems, where uncertainty,
distortions and incompleteness of information combine, are the~main
target of the~introduced $L$-fuzzy ambiguous representations.

Now we discuss how they are related to concepts used in fuzzy sets
theory, in particular, in fuzzy topology. The~latter theory in its
different flavors~\cite{Sho:2decFuzzyTop} studies crisp or fuzzy
families of \emph{fuzzy subsets} of a~universe. We are not going so
far in ``fuzzification'', and only crisp or fuzzy \emph{relations}
between hyperspaces of \emph{ordinary} closed subsets of compacta
are considered, although ``totally fuzzy'' generalizations of
ambiguous representations can also be introduced. Probably, to
develop a~consistent theory, these future extensions would require
use of sheaf-theoretic
apparatus~\cite{Hoh:FuzzySetsSheavesI:07,Hoh:FuzzySetsSheavesII:07}.
Hence, similarly to fuzzy topology in its ``more fuzzy'' variants,
different degrees of membership of a~set in a~family of valid
representatives for another set can occur. We see no reasons to
restrict ourselves to the~unit interval to express membership, and
prefer Goguen's lattice-valued
approach~\cite{Goguen:LFuzzySets:67}. Note that we use lattice
elements to describe rather \emph{quality} of representations,
which does not necessarily relates to probabilistic interpretation
of fuzzy sets.

Of course, even this ``moderate'' $L$-fuzziness of our
constructions inevitably leads to ``graded'' families similar to
studied by Negoita and Ralescu~\cite{NegRal:RepTheo:75}. Observe
that our level cuts are not sets of individual points, but
relations between hyperspaces.

The set of all $L$-ambiguous representations between $X$ and $Y$ is
denoted by $\Camb_L(X,Y)$.

\begin{defn}
An $L$-ambiguous representation $R\subset \exp X\times \exp Y\times
L$ is \emph{strict} if for all $B\in \exp Y$ the set
$RB=\{(A,\alpha)\in\exp X\times L\mid (A,B,\alpha)\in R\}$ is
closed in $\exp X\times L$.
\end{defn}

We denote the set of all strict $L$-ambiguous representations
between $X$ and $Y$ by $\Csamb_L(X,Y)$.

By the following lemma a strict $L$-ambiguous representation
$R\subset \exp X\times \exp Y\times L$ is a closed subset.

\begin{lem}\label{int-un-sup-cl}
Let $X,Y$ be compacta, $L$ a compact Lawson upper semilattice, and
let a subset $R\subset\exp X\times\exp Y\times L$ be such that, for
$A,A'\in\exp X$, $B,B'\in \exp Y$, $\alpha,\alpha'\in L$,
$A'\subset A$, $B\subset B'$, $\alpha'\le \alpha$, if
$(A,B,\alpha)\in R$, then $(A',B',\alpha')\in R$. Then $R$ is
closed if and only if the following two conditions hold:

1) for all $A\in\exp X$, $\alpha\in L$ and each filtered collection
$\CCB$ of elements of $\exp Y$ such that
$\{A\}\times\CCB\times\{\alpha\}\subset R$, we have
$(A,\bigcap\CCB,\alpha)\in R$; and

2) for all $B\in\exp Y$ the set of all $(A,\alpha)\in\exp X\times
L$ such that $(A,B,\alpha)\in R$ is closed.
\end{lem}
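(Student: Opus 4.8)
The plan is to follow the scheme of the proof of Lemma~\ref{int-un-cl}, the only new feature being that there are now two monotone directions to absorb: the $\exp X$-variable and the $\exp Y$-variable. For \emph{necessity}, assume $R$ is closed. A filtered collection $\CCB\subset\exp Y$, directed by reverse inclusion, is a net converging to $\bigcap\CCB$ in the Vietoris topology (a decreasing net of non-empty closed subsets of a compactum converges to their intersection --- the standard finite-intersection-property argument already underlying Lemma~\ref{int-G}); hence if $\{A\}\times\CCB\times\{\alpha\}\subset R$, the net $(A,B,\alpha)_{B\in\CCB}$ lies in $R$ and converges to $(A,\bigcap\CCB,\alpha)$, which therefore belongs to the closed set $R$, giving~1). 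Condition~2) is immediate, since the set in question is the preimage of $R$ under the continuous map $(A,\alpha)\mapsto(A,B,\alpha)$ from $\exp X\times L$ into $\exp X\times\exp Y\times L$.

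For \emph{sufficiency}, suppose 1) and 2) hold and let $(A,B,\alpha)$ be a point of the closure of $R$. First I would fix a closed neighbourhood $W\supset B$ in $Y$ and show $(A,W,\alpha)\in R$. Indeed, for any neighbourhood $\CCU$ of $A$ in $\exp X$ and any open $O_\alpha\ni\alpha$ in $L$, the set $\CCU\times\langle\Int W\rangle\times O_\alpha$ is a neighbourhood of $(A,B,\alpha)$ (as $B\subset\Int W$), hence meets $R$; choosing $(A',B',\alpha')$ in this intersection and using $B'\subset W$ together with the monotonicity hypothesis yields $(A',W,\alpha')\in R$ with $A'\in\CCU$, $\alpha'\in O_\alpha$. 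Thus $(A,\alpha)$ lies in the closure of $\{(A',\alpha')\mid(A',W,\alpha')\in R\}$, which is closed by~2) applied with $W$ in the role of $B$, so $(A,W,\alpha)\in R$.

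To conclude, let $\CCB$ be the family of all closed neighbourhoods of $B$ in $Y$: it is a filtered subfamily of $\exp Y$ (closed under finite intersections), and $\bigcap\CCB=B$ by regularity of the compactum $Y$. By the previous step $\{A\}\times\CCB\times\{\alpha\}\subset R$, so condition~1) gives $(A,B,\alpha)=(A,\bigcap\CCB,\alpha)\in R$, proving $R$ closed. The argument is routine; the one point requiring care is the order of the two limiting passages --- one must first collapse the $\exp Y$-direction down to the closed neighbourhoods $W$ of $B$, using~2) to control the simultaneous motion in the $\exp X$- and $L$-coordinates, and only afterwards let $W$ shrink to $B$ via~1) --- and one should recall that the closed neighbourhoods of a closed subset of a compactum form a filtered family with intersection exactly that subset, which is where Hausdorffness (normality) of $Y$ is used.
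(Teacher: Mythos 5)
Your proof is correct, and it is essentially the argument the paper has in mind: the paper disposes of this lemma in one line by observing that $\exp X\times L$ is itself a compact Lawson upper semilattice and applying Lemma~\ref{int-un-cl} to $R$ regarded as a subset of $\exp Y\times(\exp X\times L)$, which after unwinding is precisely your two-step scheme (first absorb closed neighbourhoods $W\supset B$ using 2), then shrink the filtered family of such $W$ down to $B$ using 1)). The only difference is that you re-run the proof of Lemma~\ref{int-un-cl} directly rather than quoting it after regrouping the factors.
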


This can be derived from Lemma~\ref{int-un-cl} by a simple
observation that $\exp X\times L$ is a compact Lawson upper
semilattice.

For $L$-ambiguous representations $R\subset \exp X\times
\exp Y\times L$, $S\subset \exp Y\times \exp Z\times L$ we define
the composition $R\acirc S$ in the following manner, which is
customary for $L$-relations (cf.\ the~introductory section):

\begin{multline*}
R\acirc S=
\bigl\{(A,C,\alpha)\in \exp X\times \exp Z\times L
\mid
\alpha\le\sup
\{\beta*\gamma\mid
\\
\text{there is }B\in \exp Y
\text{ such that }(A,B,\beta)\in R,(B,C,\gamma)\in S
\}
\bigr\},
\end{multline*}
or, equivalently, in the~functional notation:
$$
R\acirc S(A,C)=
\sup\bigl\{R(A,B)*S(B,C)
\mid
B\in\exp Y
\bigr\},
$$
for $A\in\exp X$, $C\in\exp Z$.

\begin{stat}
If $*:L\times L\to L$ is continuous, $R\in\Csamb_L(X,Y)$, and
$S\in\Csamb_L(Y,Z)$, then $R\acirc S$ is a strict $L$-ambiguous
representation.
\end{stat}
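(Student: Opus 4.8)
The plan is to verify the four defining conditions of an $L$-ambiguous representation for $R\acirc S$, namely conditions a)--d) in the definition of $\Camb_L$, plus the strictness condition, i.e.\ closedness of $(R\acirc S)B$ for all $B\in\exp Z$. Conditions a) and b) are formal: a) follows since composing ``$\le$'' monotonicity of $R$ and $S$ in each argument with the monotonicity of $*$ and of $\sup$ preserves the required antitone/isotone behaviour in the outer triple; b) follows because $\sup\{\beta*\gamma\mid\dots\}$ already collects all witnesses, so if $\alpha$ and $\alpha'$ are both dominated by that supremum, so is $\alpha\lor\alpha'$. Condition c) is immediate: take $B=Y$ in the defining formula and use $(A,Y,1)\in R$, $(Y,C,1)\in S$ (both from c) for $R$ and $S$) together with $1*1=1$ to get $(A,Z,\alpha)\in R\acirc S$ for all $\alpha$; the $(A,C,0)$ part is trivial.

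The substantive work is in d) (closedness of $A(R\acirc S)$ in $\exp Z\times L$) and in strictness (closedness of $(R\acirc S)C$ in $\exp X\times L$); by symmetry of the two arguments it suffices to explain one, say d). Here I would pass to the subgraph picture and invoke Lemma~\ref{int-un-sup-cl}: since $A(R\acirc S)$ is already downward closed in the $C$- and $\alpha$-directions (by a)), it is closed iff (1) it is stable under filtered intersections $\bigcap\CCB$ of the $\exp Z$-coordinate at a fixed level $\alpha$, and (2) for each fixed $C\in\exp Z$ the level set $\{\alpha\mid (A,C,\alpha)\in R\acirc S\}$ is closed in $L$. For (2): $R\acirc S(A,C)=\sup\{R(A,B)*S(B,C)\mid B\in\exp Y\}$ is realised as a maximum because $\exp Y$ is compact and, with $*$ continuous and $R(A,-)$, $S(-,C)$ upper semicontinuous, the function $B\mapsto R(A,B)*S(B,C)$ is upper semicontinuous into the compact Lawson lattice $L$; hence its image has a greatest element and the level set is $\{\alpha\mid\alpha\le R\acirc S(A,C)\}$, which is closed. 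For (1): given a filtered $\CCB\subset\exp Z$ with $(A,C',\alpha)\in R\acirc S$ for all $C'\in\CCB$, pick for each $C'$ a witness $B_{C'}\in\exp Y$ with $\alpha\le R(A,B_{C'})*S(B_{C'},C')$; by compactness of $\exp Y$ extract a cluster point $B_0$, use upper semicontinuity of $S(-,-)$ in its second argument (property b') for $S$, which gives $S(B_0,\bigcap\CCB)\ge\limsup S(B_{C'},C')$ along a suitable subnet) together with continuity of $*$ to conclude $\alpha\le R(A,B_0)*S(B_0,\bigcap\CCB)$, i.e.\ $(A,\bigcap\CCB,\alpha)\in R\acirc S$.

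The main obstacle is the filtered-intersection step (1): one must feed the filtered net $\CCB$ in $\exp Z$ and the chosen witnesses $B_{C'}$ in $\exp Y$ simultaneously into a single net, extract a convergent subnet of the witnesses, and match the limit of $S(B_{C'},C')$ against $S(B_0,\bigcap\CCB)$ using only upper semicontinuity in the second $\exp Y$-argument and continuity of $*$ — the asymmetry (the $\exp Y$ variable moves freely, the $\exp Z$ variable filters down) is where the argument could slip if $*$ were merely lower semicontinuous, which is exactly why the hypothesis that $*$ be continuous is needed. The strictness condition is handled by the mirror-image argument with the roles of $X$ and $Z$, $R$ and $S$ interchanged, using that $R\in\Csamb_L(X,Y)$ supplies closedness of $RB$ and hence upper semicontinuity of $R(-,B)$ in its first argument; by Lemma~\ref{int-un-sup-cl} the conjunction of d) and strictness also yields that $R\acirc S$ is a closed subset of $\exp X\times\exp Z\times L$, as asserted in the remark preceding the statement.
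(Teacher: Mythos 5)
Your decomposition (formal axioms, then two closedness checks via Lemma~\ref{int-un-sup-cl}) is reasonable, and you correctly locate where continuity of $*$ matters, but the central step (1) has a genuine gap. From $(A,C',\alpha)\in R\acirc S$ you ``pick a witness $B_{C'}\in\exp Y$ with $\alpha\le R(A,B_{C'})*S(B_{C'},C')$''. No such single witness need exist: by definition $\alpha$ is only below the \emph{supremum} $\sup\{R(A,B)*S(B,C')\mid B\in\exp Y\}$, and in a non-linearly ordered $L$ this supremum is in general neither attained nor dominated by any single term (take $L=\{0,a,b,1\}$ with $a,b$ incomparable: $1\le a\lor b$ but $1\not\le a$, $1\not\le b$). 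The correct substitute is that for each $\alpha'\ll\alpha$ there is a \emph{finite} family $B_1,\dots,B_n$ with $\alpha'\le R(A,B_1)*S(B_1,C')\lor\dots\lor R(A,B_n)*S(B_n,C')$ (this is exactly the shape of the paper's description of $\bacirc$), and your subnet argument would then have to be run on nets of finite subsets of $\exp Y$ --- essentially a different and harder proof. The same misconception appears in your step (2): an upper semicontinuous map into a compact Lawson lattice need not attain its supremum (same four-element example), so ``realised as a maximum'' is false; you are rescued only because the fiber $\{\alpha\mid(A,C,\alpha)\in R\acirc S\}$ is by definition the principal down-set of the supremum, hence closed for free. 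Finally, the inequality $S(B_0,\bigcap\CCB)\ge\limsup S(B_{C'},C')$ requires the \emph{joint} closedness of $S$ in $\exp Y\times\exp Z\times L$ (i.e.\ strictness of $S$, via Lemma~\ref{int-un-sup-cl}), not merely property b') at a fixed first argument, and ``$\limsup$'' itself must first be given a meaning in a non-linear $L$.

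The paper sidesteps all witness extraction. It first shows that $R\adot S=\{(A,C,\alpha)\mid \text{there are }B,\beta,\gamma \text{ with } \alpha\le\beta*\gamma,\ (A,B,\beta)\in R,\ (B,C,\gamma)\in S\}$ is closed, being the projection along the compact factor $\exp Y\times L\times L$ of a closed set (this is where strictness of $R$ and $S$ and continuity of $*$ enter). It then obtains $R\acirc S$ as the image of the closed family of vertical fibers $\{A\}\times\{C\}\times M$, $M\subcl L$, under the continuous map $\{A\}\times\{C\}\times M\mapsto(A,C,\sup M)$, using that $\sup$ is continuous on a compact Lawson upper semilattice. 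To repair your argument you should either pass to way-below approximants and finite joins of witnesses, or adopt this projection-plus-continuous-supremum scheme.
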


\begin{proof}
The set
\begin{multline*}
R\adot S=\{(A,C,\alpha)\in \exp X\times \exp Z\times L\mid
\\
B\in\exp Y,\;\beta,\gamma\in L,\;\alpha\le\beta*\gamma,\;
(A,B,\beta)\in R, (B,C,\gamma)\in S\}
\end{multline*}
is closed in $\exp X\times \exp Z\times L$. Therefore the set of
all subsets of $R\adot S\subset \exp X\times \exp Y\times L$ of the
form $\{A\}\times\{B\}\times M$, with $M\subcl L$, $M\ne\ES$, is
closed in $\exp(\exp X\times \exp Y\times L)$, hence its image
under the continuous correspondence that takes each
$\{A\}\times\{B\}\times M$ to $(A,B,\sup M)$ is closed. This image
is equal to $R\acirc S$. Other properties are obvious.
\end{proof}

Hence we obtain the~\emph{category of compacta and strict
$L$-ambiguous representations} $\Csamb^*_L$. The~composition of
sequential arrows $R:X\to Y$ and $S:Y\to Z$ in this category, i.e.\
of $R\in\Csamb_L(X,Y)$, $S\in\Csamb_L(Y,Z)$, is equal to $R\acirc
S$. For a compactum $X$, the identity morphism in this category is
equal to
$$
\uni{X}=\{(A,B,\alpha)\in \exp X\times \exp X\mid A\subset B
\text{ or }\alpha=0\}.
$$

If $*$ is not continuous or $L$-ambiguous representations are not
strict, then their composition is not necessarily an~$L$-ambiguous
representation. Therefore we must repeat the trick which was used
for the crisp case. For $L$-ambiguous representations $R\subset
\exp X\times\exp Y\times L$, $S\subset\exp Y\times \exp Z\times L$,
let $A(R\bacirc S)=\Cl(A(R\acirc S))$ for all $A\in\exp X$. In
other words, $(C,\gamma)\in A(R\bacirc S)$ if and only if for all
$\gamma'\ll\gamma$ and each closed neighborhood $V\supset C$ there
are $B_1,\dots,B_n\in\exp Y$, $\alpha_1,\beta_1,
\dots,\alpha_n,\beta_n\in L$ such that
\begin{gather*}
(A,B_1,\alpha_1),\dots,(A,B_n,\alpha_n)\in R,
(B_1,V,\beta_1),\dots,(B_n,V,\beta_n)\in S,
\\
\alpha_1*\beta_1\lor\dots\lor\alpha_n*\beta_n\ge\gamma'.
\end{gather*}
If $*=\land$, then we write $\bcirc$ for $\bacirc$. If only
strict $L$-ambiguous representations are taken, then
$\bacirc=\acirc$.

Although $\acirc$ is associative, the composition $\bacirc$ of
$L$-ambiguous representations is not associative in the general
case. Thus we must impose further restrictions on the class of
allowed relations.

For a relation $R\subset\exp X\times\exp Y\times L$ such that all
its $\alpha$-cuts are ambiguous representations, we define a
relation $R^\sms\subset\exp Y\times
\exp X\times L$ by the equality
$(R^\sms)_\alpha=\bigcap_{\beta\ll\alpha}(R_\beta)^\sms$. In
other words, $(B,A,\alpha)\in R^\sms$ if and only if the set $A$
has non-empty intersections with all $A'\in\exp X$ such that $B$ is
$R_\beta$-unavoidable for $A'$ for some $\beta\ll\alpha$.

\begin{stat}
If $R\subset \exp X\times\exp Y\times L$ is an $L$-ambiguous
representation, then so is $R^\sms$.
\end{stat}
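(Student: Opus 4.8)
The plan is to verify conditions a)--d) of the definition of an $L$-ambiguous representation for $R^\sms$, working through the $\alpha$-cut description $(R^\sms)_\alpha=\bigcap_{\beta\ll\alpha}(R_\beta)^\sms$ and exploiting that each $R_\beta$ is a crisp ambiguous representation between $X$ and $Y$ (so each $(R_\beta)^\sms\in\Camb(Y,X)$ by the discussion preceding Proposition about $(R^\sms)^\sms\subset R$). First I would record the basic monotonicity facts: if $\beta\le\beta'$ then $R_{\beta'}\subset R_\beta$, and since $(-)^\sms$ is antitone (it is built from the antitone traversal $(-)^\perp$ applied twice, hence isotone in $R$ --- more precisely $R\subset R'$ implies $R^\sms\subset R'^\sms$, as one checks directly from the defining formula), we get $(R_{\beta'})^\sms\subset (R_\beta)^\sms$. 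Wait --- I should be careful: a larger crisp relation has a larger pseudo-inverse, so the family $\{(R_\beta)^\sms\}_\beta$ is \emph{isotone} in $\beta$ is false; rather $R_{\beta'}\subset R_\beta$ for $\beta\le\beta'$ gives $(R_{\beta'})^\sms\subset(R_\beta)^\sms$, so the family decreases as $\beta$ increases. This is exactly what makes the intersection over $\beta\ll\alpha$ the natural ``limit from below''.

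Next I would check the conditions in turn. For a) (antitonicity in the first $\exp X$-argument, isotonicity in the second, and antitonicity in $\alpha$): antitonicity in $\alpha$ is immediate since $\alpha\le\alpha'$ implies $\{\beta\mid\beta\ll\alpha\}\subset\{\beta\mid\beta\ll\alpha'\}$ --- wait, it's the reverse, so $(R^\sms)_{\alpha'}\subset(R^\sms)_\alpha$, which is the correct direction for a subgraph. The behavior in the $\exp Y$- and $\exp X$-arguments of $R^\sms$ is inherited cut-by-cut from the fact that each $(R_\beta)^\sms$ satisfies a) and b) of the crisp definition (an intersection of ambiguous representations between $Y$ and $X$ still satisfies those monotonicity conditions). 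For b) (join-closedness: $(A,B,\alpha),(A,B,\gamma)\in R^\sms$ implies $(A,B,\alpha\lor\gamma)\in R^\sms$) the key point is that $\beta\ll\alpha\lor\gamma$ does not simply decompose; here I would use Lemma~\ref{lem.ldot}-style reasoning, or more simply the fact that in a compact Lawson lattice $\beta\ll\alpha\lor\gamma$ implies $\beta\le\beta_1\lor\beta_2$ with $\beta_1\ll\alpha$, $\beta_2\ll\gamma$ (distributivity of $\ll$ over $\lor$), hence $(R_{\beta})^\sms\supset(R_{\beta_1\lor\beta_2})^\sms=(R_{\beta_1}\cap R_{\beta_2})^\sms\supset(R_{\beta_1})^\sms\cap(R_{\beta_2})^\sms$, and the last intersection contains $(B,A)$ by hypothesis. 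Condition c) is a direct verification: $(A,Y,\alpha)\in R^\sms$ for all $\alpha$ because $Y$ has empty intersection with no nonempty closed set, so $Y$ is trivially in the double-traversal; and $(A,B,0)\in R^\sms$ since $0\ll$ nothing nontrivial or the intersection over an appropriate index set is all of $\exp Y\times\exp X$ --- in any case $\alpha=0$ forces membership.

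The main obstacle will be condition d): showing that for each $A\in\exp X$ the set $AR^\sms=\{(B,\alpha)\mid(A,B,\alpha)\in R^\sms\}\subset\exp Y\times L$ is closed. Unwinding, $(A,B,\alpha)\in R^\sms$ means $B\in(A'R_\beta)^\perp{}^\perp$ fails appropriately --- more precisely $A$ meets every $A'$ for which $B$ is $R_\beta$-unavoidable, for every $\beta\ll\alpha$. The subtlety is the interaction of the intersection over $\beta\ll\alpha$ (an ``open'' quantifier in the Lawson sense) with the Vietoris closure in $B$; one cannot argue $\alpha$-cut by $\alpha$-cut in isolation. My plan here is to use Lemma~\ref{int-un-sup-cl} (or its crisp predecessor Lemma~\ref{int-un-cl}): it suffices to check (1) a filtered-infimum condition in the $\exp Y$-coordinate, i.e.\ if $\mathcal B$ is a filtered family with $\{A\}\times\mathcal B\times\{\alpha\}\subset R^\sms$ then $(A,\bigcap\mathcal B,\alpha)\in R^\sms$, and (2) for fixed $A,B$ the set of $\alpha$ with $(A,B,\alpha)\in R^\sms$ is closed in $L$. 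For (2), that $\alpha$-set equals $\{\alpha\mid\forall\beta\ll\alpha,\ (B,A)\in(R_\beta)^\sms\}$, which is a down-set (by the $\alpha$-antitonicity already noted) and closed under suprema of the relevant form precisely because of the ``$\forall\beta\ll\alpha$'' formulation --- this is the standard trick that turns a way-below-indexed intersection into an upper-semicontinuous, hence closed-cut, object, exactly as subgraphs of capacities behave (condition (2) of the subgraph lemma for capacities). For (1), I would fix a closed neighborhood $V\supset\bigcap\mathcal B$, pick $B_0\in\mathcal B$ with $B_0\subset V$ (compactness), and transfer membership from $B_0$ to $V$ using isotonicity of $R^\sms$ in its $\exp Y$-slot (condition a) already verified), then invoke closedness of each $(R_\beta)^\sms$-section. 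Assembling these gives d), and the proof is complete.
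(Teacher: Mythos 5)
Your handling of conditions a)--c) tracks the paper's proof closely. In particular, for the join condition b) you use exactly the paper's two ingredients: Lemma~\ref{lem.ldot} applied to $*=\lor$ to write $\beta\le\beta_1\lor\beta_2$ with $\beta_1\ll\alpha$, $\beta_2\ll\gamma$, and the cut identity $R_{\beta_1\lor\beta_2}=R_{\beta_1}\cap R_{\beta_2}$. The one step you leave unjustified, $(R_{\beta_1}\cap R_{\beta_2})^\sms\supset (R_{\beta_1})^\sms\cap(R_{\beta_2})^\sms$, is the \emph{nontrivial} direction (isotonicity of $(-)^\sms$ gives only ``$\subset$''), so it cannot be waved through as formal monotonicity: it needs the union-of-witnesses argument --- for $A'$ disjoint from $A$ pick $B_1\in A'R_{\beta_1}$ and $B_2\in A'R_{\beta_2}$ disjoint from $B$ and note that $B_1\cup B_2$ lies in both sections because these are closed under supersets. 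The paper performs the same step in the form $(A'R_{\alpha'}\cap A'R_{\beta'})^\perp=(A'R_{\alpha'})^\perp\cup(A'R_{\beta'})^\perp$, valid for inclusion hyperspaces; spelled out, the two arguments coincide.

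The genuine gap is in your plan for condition d). Since $R^\sms\subset\exp Y\times\exp X\times L$ is a representation \emph{between $Y$ and $X$}, condition d) requires that for each $\tilde B\in\exp Y$ the section $\tilde BR^\sms=\{(\tilde A,\alpha)\in\exp X\times L\mid(\tilde B,\tilde A,\alpha)\in R^\sms\}$ be closed in $\exp X\times L$. You instead fix $A\in\exp X$ and set out to prove closedness of the section lying in $\exp Y\times L$; but that is precisely \emph{strictness} of $R^\sms$, which fails in general --- this is exactly why the paper must single out open ambiguous representations (cf.\ the example of $R_f$ for a non-open map $f$, whose sections $R_f^\sms A$ are not closed). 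So the statement your step d) targets is not provable, whereas the statement actually required is the easy part of the proposition: for fixed $\tilde B$ and $\beta$ the cut section $\tilde B(R_\beta)^\sms=\{A'\in\exp X\mid\tilde B\in(A'R_\beta)^\perp\}^\perp$ is a traversal, hence an inclusion hyperspace, hence closed in $\exp X$ and stable under filtered intersections; intersecting over $\beta\ll\alpha$ and using the interpolation property of $\ll$ to verify condition 2) of Lemma~\ref{int-un-cl} in the $L$-coordinate then gives closedness of $\tilde BR^\sms$. Note also that your sub-step (1), which takes filtered intersections in the $\exp Y$-coordinate, cannot work as stated: $R^\sms$ is \emph{downward} closed in that coordinate, while the filtered-intersection criterion of Lemma~\ref{int-G} requires upward closedness. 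Redirect d) to the correct sections and the proof goes through.
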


\begin{proof}
It is obvious that $R^\sms\subset\exp Y\times \exp X\times L$ is
closed, contains $\exp Y\times \exp X\times \{0\}\cup \exp Y\times
\{X\}\times L$, and $(B,A,\alpha)\in R^\sms$, $B\supset B'\in\exp
Y$, $A\subset A'\in\exp X$, $\alpha\ge\alpha\in L$ implies
$(B',A',\alpha')\in R^\sms$.

Let $(A,B,\alpha),(A,B,\beta)\in R^\sms$, then $A\cap A'\ne\ES$ for
all $A'\in\exp X$ such that $B\in(A'R_{\alpha'})^\perp$ for some
$\alpha'\ll\alpha$ or $B\in(A'R_{\beta'})^\perp$ for some
$\beta'\ll\beta$. Let $\gamma\ll\alpha\lor\beta$, then
due to Lemma~\ref{lem.ldot} there are $\alpha'\ll\alpha$,
$\beta'\ll\beta$ such that $\alpha'\lor\beta'\ge\gamma$. Then
$$
A'R_{\gamma}\supset A'R_{\alpha'\lor\beta'}=A'R_{\alpha'}\cap
A'R_{\beta'}.
$$

Hence
$$
A'R_{\gamma}^\perp
\subset
(A'R_{\alpha'}\cap A'R_{\beta'})^\perp
=
(A'R_{\alpha'})^\perp\cup (A'R_{\beta'})^\perp,
$$
therefore $B\in A'R_{\gamma}^\perp$ implies $A'\cap A\ne\ES$, i.e.
$(A,B,\alpha\lor\beta)\in R^\sms$. Thus $R^\sms$ is an
$L$-ambiguous representation.
\end{proof}

\begin{lem}\label{lem.rs-sms}
For $L$-ambiguous representations $R\subset \exp X\times\exp
Y\times L$, $S\subset\exp Y\times \exp Z\times L$ the inclusion
$S^\sms\bacirc R^\sms\subset(R\bacirc S)^\sms$ is valid.
\end{lem}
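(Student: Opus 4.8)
The plan is to reduce the $L$-ambiguous statement to the crisp one (Lemma~\ref{lem.rs-sr}) by passing to $\alpha$-cuts, using that $(-)^\sms$ for $L$-ambiguous representations is defined cut-wise via the way-below relation, and that $\bacirc$ is likewise controlled by cuts. First I would unwind the definitions: since $(T^\sms)_\alpha = \bigcap_{\beta\ll\alpha}(T_\beta)^\sms$ for any relation $T$ whose cuts are ambiguous representations, and since membership $(A,C,\alpha)\in R\bacirc S$ is equivalent to: for every $\gamma\ll\alpha$ and every closed neighborhood $V\supset C$ there exist $B_i$ and $\alpha_i,\beta_i$ with $(A,B_i,\alpha_i)\in R$, $(B_i,V,\beta_i)\in S$ and $\bigvee_i \alpha_i*\beta_i\ge\gamma$. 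The goal $S^\sms\bacirc R^\sms\subset (R\bacirc S)^\sms$ then becomes, after taking the $\alpha$-cut of the right side, a statement about the crisp relations $(R\bacirc S)_\beta$ for $\beta\ll\alpha$.

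The key steps, in order. Step 1: establish the cut identity $(R\bacirc S)_\gamma \supset (R_{\gamma_1}\bcirc S_{\gamma_2})$ whenever $\gamma_1*\gamma_2\ge\gamma$ — more precisely, show that if $(A,C)\in R_{\gamma_1}\bcirc S_{\gamma_2}$ with $\gamma_1,\gamma_2$ chosen so that $\gamma_1*\gamma_2$ is way above the relevant level, then $(A,C,\gamma')\in R\bacirc S$ for suitable $\gamma'$; this uses that $C\in\Cl(A(R_{\gamma_1}\ccirc S_{\gamma_2}))$ gives, for each closed neighborhood $V\supset C$, a $B$ with $(A,B)\in R_{\gamma_1}$, $(B,V)\in S_{\gamma_2}$, i.e.\ $(A,B,\gamma_1)\in R$, $(B,V,\gamma_2)\in S$. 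Step 2: identify the cuts of the converses — $(R^\sms)_\alpha=\bigcap_{\beta\ll\alpha}(R_\beta)^\sms$ by definition, and similarly for $S^\sms$ and for $(R\bacirc S)^\sms$. Step 3: take an arbitrary $(C,A,\alpha)\in S^\sms\bacirc R^\sms$ and unwind: for each $\gamma\ll\alpha$ and each closed neighborhood $W\supset A$ there are $\tilde B_i$ and levels with $(C,\tilde B_i,\delta_i)\in S^\sms$, $(\tilde B_i,W,\eps_i)\in R^\sms$, $\bigvee_i\delta_i*\eps_i\ge\gamma$. Then fix $\beta\ll\alpha$; choose (Lemma~\ref{lem.ldot}, using monotonicity and infinite distributivity of $*$ w.r.t.\ $\inf$) $\gamma$ with $\beta\ll\gamma$ and then from each $\delta_i*\eps_i$ above the appropriate level extract $\delta_i'\ll\delta_i$, $\eps_i'\ll\eps_i$ with $\bigvee_i\delta_i'*\eps_i'\ge\beta'$ for some $\beta'$ still way above the target. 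Since $(C,\tilde B_i,\delta_i)\in S^\sms$ forces $(C,\tilde B_i)\in (S_{\delta_i'})^\sms$ and likewise $(\tilde B_i,W)\in(R_{\eps_i'})^\sms$, apply the crisp Lemma~\ref{lem.rs-sr} to each pair $(R_{\eps_i'},S_{\delta_i'})$ — or rather its contrapositive content — to get $(C,W)\in (R_{\eps_i'}\bcirc S_{\delta_i'})^\sms$. Step 4: combine these finitely many crisp facts using Step 1 and the definition of $R\bacirc S$ at level $\beta$ to conclude $(C,A,\beta)\in (R\bacirc S)^\sms$; since $\beta\ll\alpha$ was arbitrary, $(C,A,\alpha)\in (R\bacirc S)^\sms$.

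I expect the main obstacle to be Step 1 together with the bookkeeping in Step 3: matching the way-below levels so that a $\sup$ of products $\alpha_i*\beta_i$ over the $R$-side reappears correctly after composing with the $S$-side and then applying $(-)^\sms$, all while only the \emph{lower} semicontinuity (not continuity) of $*$ is available. The crucial lemma making this go through is Lemma~\ref{lem.ldot}, which lets one descend below a product $\gamma\ll\alpha*\beta$ into $\alpha'\ll\alpha$, $\beta'\ll\beta$; this is exactly what is needed to intertwine the ``$\bigcap_{\beta\ll\alpha}$'' in the definition of $(-)^\sms$ with the ``for all $\gamma'\ll\gamma$'' in the definition of $\bacirc$. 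A secondary technical point is that $(R_\beta)^\sms$ as a crisp converse is antitone in $\beta$ (larger cut $\Rightarrow$ smaller traversal $\Rightarrow$ larger traversal-of-traversal-complement, etc.), so the intersections behave monotonically and the finite selections can be made compatibly; this monotonicity should be recorded explicitly before Step 3. Everything else — closedness, the containments $\exp X\times\{Y\}$, the antitone/isotone structure — is routine and can be dispatched by citing the earlier propositions that $R^\sms$, $S^\sms$, and $R\bacirc S$ are $L$-ambiguous representations, so the proof reduces entirely to the inclusion of underlying sets.
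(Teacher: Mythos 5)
Your proposal is correct and, once Step~4 is written out in full, it essentially coincides with the paper's argument; the only organizational difference is that you factor the two\-/stage witness extraction through the crisp Lemma~\ref{lem.rs-sr} applied to the cuts $(R_{\eps_i'},S_{\delta_i'})$, whereas the paper performs that extraction directly from the definition of $(-)^\sms$ for $L$-ambiguous representations (from $(\tilde B_i,W,\eps_i)\in R^\sms$ and $A'\cap W=\ES$ it produces $B_i'$ disjoint from $\tilde B_i$ with $(A',B_i',\eps_i')\in R$, and then from $(C,\tilde B_i,\delta_i)\in S^\sms$ a $C_i'$ disjoint from $C$ with $(B_i',C_i',\delta_i')\in S$). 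Be aware that the reduction does \emph{not} discharge the finite combination across $i$: from $(C,W)\in(R_{\eps_i'}\bcirc S_{\delta_i'})^\sms$ you get, for each $A'$ disjoint from $W$, a witness $C_i'\in A'(R_{\eps_i'}\bcirc S_{\delta_i'})$ disjoint from $C$, but the individual levels $\eps_i'*\delta_i'$ need not dominate the target $\beta$ --- only their join does --- so your Step~1 applied termwise is insufficient. You must still set $C'=C_1'\cup\dots\cup C_n'$, note that any closed neighbourhood $V\supset C'$ is a neighbourhood of each $C_i'$ and hence yields $B_i''$ with $(A',B_i'',\eps_i')\in R$, $(B_i'',V,\delta_i')\in S$, and read off $R\acirc S(A',V)\ge\bigvee_i\eps_i'*\delta_i'\ge\beta$ from the supremum defining $\acirc$; this union\-/of\-/witnesses step \emph{is} the paper's proof, so the detour buys little. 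Two minor corrections: the descent from $\bigvee_i\delta_i*\eps_i\ge\gamma$ with $\beta\ll\gamma$ to $\bigvee_i\delta_i'*\eps_i'\ge\beta$ with $\delta_i'\ll\delta_i$, $\eps_i'\ll\eps_i$ should be justified by the continuity of $\lor$ together with the lower semicontinuity and $\lor$-distributivity of $*$ (so that $\delta*\eps$ is a directed supremum over $\delta'\ll\delta$, $\eps'\ll\eps$), as the paper does, rather than by Lemma~\ref{lem.ldot}, whose hypotheses (continuity and $\inf$-distributivity) are not the standing assumptions on $*$; and the clean endpoint is $(C,A)\in\bigl((R\bacirc S)_\beta\bigr)^\sms$ for every $\beta\ll\alpha$, which is literally membership in $\bigl((R\bacirc S)^\sms\bigr)_\alpha=\bigcap_{\beta\ll\alpha}\bigl((R\bacirc S)_\beta\bigr)^\sms$, rather than the slightly different intermediate claim $(C,A,\beta)\in(R\bacirc S)^\sms$.
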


\begin{proof}
Let $(C,A,\gamma)\in (S^\sms)\bacirc (R^\sms)$, then for all closed
neighborhoods $V\supset A$ and all $\gamma'\ll\gamma$ there
are $B_1,\dots,B_n\in\exp Y$, $\alpha_1,\beta_1,
\dots,\alpha_n,\beta_n\in L$ such that
\begin{gather*}
(C,B_1,\alpha_1),\dots,(C,B_n,\alpha_n)\in S^\sms,
(B_1,V,\beta_1),\dots,(B_n,V,\beta_n)\in R^\sms,
\\
\alpha_1*\beta_1\lor\dots\lor\alpha_n*\beta_n\ge\gamma'.
\end{gather*}
Given an element $\delta\in L$ such that $\delta\ll \gamma$,
we choose $\gamma'\in L$ such that $\delta\ll\gamma'\ll
\gamma$.

For all $A'\in\exp X$ such that $A'\cap V=\ES$, and all
$i\in\{1,\dots,n\}$, $\beta_i'\ll \beta_i$, there is $B'_i\in\exp
Y$ such that $B'_i\cap B_i=\ES$, $(A',B'_i,\beta_i')\in R$.
Similarly, for all $i\in\{1,\dots,n\}$, $\alpha_i'\ll \alpha_i$,
there is $C'_i\in\exp Y$ such that $C'_i\cap C=\ES$,
$(B'_i,C'_i,\alpha_i')\in S$. Due to the continuity of $\lor$ and
the lower semicontinuity of $*$, we can choose $\alpha_i',\beta_i'$
so that
$$
\alpha'_1*\beta'_1\lor\dots\lor\alpha'_n*\beta'_n\ge\delta.
$$
Then the set $C'=C_1'\cup\dots\cup C_n'$ is closed and nonempty,
and $C'\cap C=\ES$, $(A',C',\delta)\in R\bacirc S$. Such $C'$
exists for all $\delta\ll\gamma$ and all $A'\in\exp X$ such
that $A'\cap V=\ES$ for some closed neighborhood $V\supset A$, i.e.
for all $A'\in\exp X$ such that $A'\cap A=\ES$. Thus
$(C,A,\gamma)\in(R\bacirc S)^\sms$.
\end{proof}

\begin{stat}\label{char-lpseudo}
For an $L$-ambiguous representation $R\subset \exp X\times \exp
Y\times L$, the inclusion $(R^\sms)^\sms\subset R$ is valid, and
$(R^\sms)^\sms= R$ if and only if for all $(A,B,\alpha)\in R$,
$\beta\in L$ such that $\beta\ll\alpha$, and a closed
neighborhood $V\supset B$, there is a closed neighborhood $U\supset
A$ such that $(U,V,\beta)\in R$.
\end{stat}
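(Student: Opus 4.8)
The plan is to reduce both assertions to the crisp case by passing to $\alpha$-cuts. First I would set up three preliminary facts. (i) For every $\gamma\in L$ the cut $R_\gamma$ is a crisp ambiguous representation between $X$ and $Y$ (immediate from a)--d), since $R$ is the subgraph of an $L$-fuzzy relation), and likewise $R^\sms$ is an $L$-ambiguous representation (proved above) whose cuts $(R^\sms)_\gamma$ are crisp ambiguous representations. (ii) The crisp operation $(-)^\sms$ is isotone, so $\delta\le\gamma$ gives $R_\delta\supset R_\gamma$ and hence $(R_\delta)^\sms\supset(R_\gamma)^\sms$; combining this with the definition $(R^\sms)_\gamma=\bigcap_{\delta\ll\gamma}(R_\delta)^\sms$ yields the sandwich $(R_\gamma)^\sms\subset(R^\sms)_\gamma\subset(R_\beta)^\sms$ for every $\beta\ll\gamma$. (iii) Since $R$ is a subgraph and $\gamma=\sup\{\beta\mid\beta\ll\gamma\}$ over a directed set, $\bigcap_{\beta\ll\gamma}R_\beta=R_\gamma$. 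I would also recall from the proof of Corollary~\ref{char-pseudo} the explicit formula $A(T^\sms)^\sms=\Cl(\bigcup\{UT\mid U\text{ a closed neighborhood of }A\})$ for a crisp ambiguous representation $T$, together with its translation: $(A,B)\in(T^\sms)^\sms$ if and only if for every closed neighborhood $W\supset B$ there is a closed neighborhood $U\supset A$ with $(U,W)\in T$ (the family $\bigcup\{UT\}$ is closed under passing to closed supersets, so its Vietoris closure is detected by upper-topology neighborhoods).

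For the inclusion $(R^\sms)^\sms\subset R$ I would compute cutwise: $((R^\sms)^\sms)_\alpha=\bigcap_{\gamma\ll\alpha}((R^\sms)_\gamma)^\sms$ by the definition of $(-)^\sms$ for $L$-ambiguous representations. Fix $\gamma\ll\alpha$ and any $\beta\ll\gamma$; by the sandwich (ii) and the fact that $(-)^\sms$ is isotone, $((R^\sms)_\gamma)^\sms\subset((R_\beta)^\sms)^\sms$, and the right-hand side is contained in $R_\beta$ by the crisp inclusion $(R^\sms)^\sms\subset R$ applied to $R_\beta$. Hence $((R^\sms)_\gamma)^\sms\subset\bigcap_{\beta\ll\gamma}R_\beta=R_\gamma$ by (iii), and intersecting over $\gamma\ll\alpha$ gives $((R^\sms)^\sms)_\alpha\subset\bigcap_{\gamma\ll\alpha}R_\gamma=R_\alpha$, again by (iii).

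For the equivalence I would again work cut by cut. Assume the stated condition and let $(A,B,\alpha)\in R$. For each $\gamma\ll\alpha$ the condition, applied with the way-below element $\gamma$ in place of $\beta$, says exactly that $(A,B)$ satisfies the translated description of $(R_\gamma^\sms)^\sms$; hence $(A,B)\in(R_\gamma^\sms)^\sms\subset((R^\sms)_\gamma)^\sms$ using the left half of the sandwich (ii). Intersecting over $\gamma\ll\alpha$ gives $(A,B)\in((R^\sms)^\sms)_\alpha$, so $R_\alpha\subset((R^\sms)^\sms)_\alpha$, and together with the previous paragraph $(R^\sms)^\sms=R$. For the converse, suppose the condition fails: there are $(A_0,B_0,\alpha_0)\in R$, $\beta_0\ll\alpha_0$ and a closed neighborhood $V_0\supset B_0$ such that $(U,V_0,\beta_0)\notin R$ for every closed neighborhood $U\supset A_0$. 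Using interpolation in the continuous lattice $L$, choose $\gamma_0$ with $\beta_0\ll\gamma_0\ll\alpha_0$. Then $(R^\sms)_{\gamma_0}\subset(R_{\beta_0})^\sms$ by the right half of (ii), so $((R^\sms)_{\gamma_0})^\sms\subset(R_{\beta_0}^\sms)^\sms$; but the translated description of $(R_{\beta_0}^\sms)^\sms$, evaluated at the neighborhood $V_0$ (for which no admissible $U$ exists), shows $(A_0,B_0)\notin(R_{\beta_0}^\sms)^\sms$. Therefore $(A_0,B_0)\notin((R^\sms)^\sms)_{\alpha_0}$ although $(A_0,B_0)\in R_{\alpha_0}$, so $(R^\sms)^\sms\ne R$.

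The step I expect to be the real obstacle is keeping the three way-below layers $\beta\ll\gamma\ll\alpha$ coherent and invoking interpolation precisely where the failure of the condition at level $\beta_0$ must be moved onto the cut $(R^\sms)_{\gamma_0}$; the sandwich $(R_\gamma)^\sms\subset(R^\sms)_\gamma\subset(R_\beta)^\sms$ is the device that makes this transfer possible. Everything else --- that the relevant cuts of $R$ and of $R^\sms$ are genuine crisp ambiguous representations, that $(-)^\sms$ is isotone, and that the Vietoris-closure translation of the crisp formula is legitimate for superset-closed families --- is routine verification given the crisp results of the previous section.
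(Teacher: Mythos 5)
Your proof is correct, and it is organized differently from the paper's. The paper proves the proposition by one monolithic computation: it expands $BR^\sms_\alpha$ and then $A(R^\sms)^\sms_\alpha$ directly in terms of the traversal operation, using $\CCA^{\perp\perp}=\Cl\CCA$ for superset-closed families, arrives at an explicit formula for $A(R^\sms)^\sms_\alpha$ as a traversal of a union over $\beta\ll\alpha$ of neighborhood-indexed sets, and then reads off the characterization by producing the explicit witness $\tilde B=Y\setminus\Int V$ when the stated condition fails. You instead reduce everything to the crisp Proposition and Corollary~\ref{char-pseudo} by working cut by cut; the sandwich $(R_\gamma)^\sms\subset(R^\sms)_\gamma\subset(R_\beta)^\sms$ for $\beta\ll\gamma$, the identity $\bigcap_{\beta\ll\gamma}R_\beta=R_\gamma$, and interpolation $\beta_0\ll\gamma_0\ll\alpha_0$ in the continuous lattice $L$ are exactly the right devices for transferring the crisp statements across levels, and your translation of $B\in\Cl\bigl(\bigcup\{UT\mid U\text{ a closed neighborhood of }A\}\bigr)$ into the closed-neighborhood condition is legitimate precisely because that family is closed under passing to closed supersets. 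Both arguments rest on the same underlying ingredient (the double-traversal closure formula and the continuity of $L$, which the paper also uses implicitly when it absorbs the inner quantifier over $\gamma\ll\beta$); yours is more modular, reuses the crisp section as a black box, and makes the role of interpolation explicit, while the paper's direct computation yields the explicit formula for $A(R^\sms)^\sms_\alpha$ as a reusable byproduct.
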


\begin{proof}
By the definition, for all $B\in\exp Y$:
\begin{gather*}
BR^\sms_\alpha=
\bigcap_{\beta\ll\alpha}
\{A\in\exp X\mid B\in (AR_\beta)^\perp\}^\perp=
\\
(\bigcup_{\beta\ll\alpha}
\{A\in\exp X\mid B\in (AR_\beta)^\perp\})^\perp,
\end{gather*}
hence for all $\tilde A\in\exp X$:
\begin{gather*}
A(R^\sms)^\sms_\alpha= (\bigcup_{\beta\ll\alpha}
\{B\in\exp X\mid A\in (BR^\sms_\beta)^\perp\})^\perp=
\\
(\bigcup_{\beta\ll\alpha}
\{B\in\exp X\mid A\in
(\bigcup_{\gamma\ll\beta}
\{A'\in\exp X\mid B\in (A'R_\gamma)^\perp\}
)^{\perp\perp}
\}
)^\perp=
\\
(\bigcup_{\beta\ll\alpha}
\{B\in\exp X\mid A\in
\Cl(\bigcup_{\gamma\ll\beta}
\{A'\in\exp X\mid B\in (A'R_\gamma)^\perp\}
)\}
)^\perp=
\\
(\bigcup_{\beta\ll\alpha}
\{B\in\exp X\mid
\text{ for all }U\in\exp X,
A\subset \Int U
\\
\text{ there is }\gamma\ll\beta\text{ such that }
B\in(UR_\gamma)^\perp
\}
)\}
)^\perp=
\\
(\bigcup_{\beta\ll\alpha}
\{B\in\exp X\mid
B\in(UR_\beta)^\perp
\text{ for all }U\in\exp X,
A\subset \Int U
\}
)\}
)^\perp\subset AR_\alpha.
\end{gather*}

The equality $(A^\sms)^\sms_\alpha=AR_\alpha$ is
equivalent to:
$$
\bigcup_{\beta\ll\alpha}
\{B\in\exp X\mid
B\in(UR_\beta)^\perp
\text{ for all }U\in\exp X,
A\subset \Int U
\}
)\}=(AR_\alpha)^\perp.
$$

It fails if and only if there is $\tilde B\in\exp Y$ such that
$\tilde B\in(UR_\beta)^\perp$ for all $\beta\ll\alpha$,
$U\in\exp X$, $A\subset \Int U$, but $\tilde B\notin
(AR_\alpha)^\perp$, i.e. there is $B\in\exp Y$ such that
$B\cap\tilde B=\ES$, $(A,B,\alpha)\in R$. In this case let $V$ be a
closed neighborhood of $B$ such that $V\cap \tilde B=\ES$, then
$V\notin UR_\beta$ for all $\beta\ll\alpha$ and closed
neighborhoods $U$ of $A$. Thus the condition of the proposition is
sufficient for the equality $(R^\sms)^\sms=R$. On the contrary, let
such $A$, $B$, $V$ and $\alpha$ exist, then $\tilde B=Y\setminus
\Int V$ is a required counterexample, and the condition is
necessary.
\end{proof}

\begin{defn}
If $R\subset \exp X\times \exp Y\times L$ is an $L$-ambiguous
representation such that $(R^\sms)^\sms=R$, then we call $R^\sms$
\emph{pseudo-inverse} to $R$, and $R$ is called
\emph{pseudo-invertible}.
\end{defn}

We denote the set of pseudo-invertible $L$-ambiguous
representations from $X$ to $Y$ by $\Cpamb_L(X,Y)$.

By Lemma~\ref{lem.rs-sms}, similarly to
Proposition~\ref{cpamb-comp} we obtain:

\begin{stat}\label{cpambl-comp}
Let $R\subset\exp X\times \exp Y\times L$ and $S\subset \exp
Y\times\exp Z\times L$ be pseudo-invertible $L$-ambiguous
representations. Then $R\bacirc S\subset\exp X\times
\exp Z\times L$ is a pseudo-invertible $L$-ambiguous representation as well, and
$(R\bacirc S)^\sms= S^\sms\bacirc R^\sms$.
\end{stat}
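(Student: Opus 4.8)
The plan is to run the same formal argument as in the proof of Proposition~\ref{cpamb-comp}, with the $L$-fuzzy inputs substituted for the crisp ones: Lemma~\ref{lem.rs-sms} in place of Lemma~\ref{lem.rs-sr}, and the inclusion $(R^\sms)^\sms\subset R$ of Proposition~\ref{char-lpseudo} in place of its crisp counterpart. Before that I would record three easy preliminary facts. (i) $R\bacirc S$ is again an $L$-ambiguous representation: each set $A(R\bacirc S)=\Cl(A(R\acirc S))$ is closed in $\exp Z\times L$, and antitonicity in the first argument, isotonicity together with upper semicontinuity in the second, and the normalization $(R\bacirc S)(A,Z)=1$ are inherited from $R$ and $S$ just as in the crisp case. (ii) The converse operation $(-)^\sms$ on $L$-ambiguous representations is isotone for inclusion, since $(R^\sms)_\alpha=\bigcap_{\beta\ll\alpha}(R_\beta)^\sms$ and both the crisp converse and arbitrary intersection are monotone. (iii) If $R$ is pseudo-invertible then so is $R^\sms$: applying $(-)^\sms$ to $(R^\sms)^\sms=R$ gives $\bigl((R^\sms)^\sms\bigr)^\sms=R^\sms$.

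With these in hand I would write the chain of inclusions. Using pseudo-invertibility of $R$ and $S$ for the first equality, Lemma~\ref{lem.rs-sms} applied to the composable pair $(S^\sms,R^\sms)$ for the first inclusion, Lemma~\ref{lem.rs-sms} applied to $(R,S)$ followed by fact (ii) for the second, and Proposition~\ref{char-lpseudo} applied to the $L$-ambiguous representation $R\bacirc S$ for the last:
$$R\bacirc S=(R^\sms)^\sms\bacirc(S^\sms)^\sms\subset(S^\sms\bacirc R^\sms)^\sms\subset\bigl((R\bacirc S)^\sms\bigr)^\sms\subset R\bacirc S.$$
Since the two ends coincide, every inclusion is an equality; in particular $\bigl((R\bacirc S)^\sms\bigr)^\sms=R\bacirc S$, so $R\bacirc S$ is pseudo-invertible.

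Finally I would identify the pseudo-inverse. The chain already gives $R\bacirc S=(S^\sms\bacirc R^\sms)^\sms$; applying $(-)^\sms$ yields $(R\bacirc S)^\sms=\bigl((S^\sms\bacirc R^\sms)^\sms\bigr)^\sms$. By fact (iii) both $R^\sms$ and $S^\sms$ are pseudo-invertible, so running the chain of the previous paragraph with $(S^\sms,R^\sms)$ in place of $(R,S)$ shows that $S^\sms\bacirc R^\sms$ is pseudo-invertible, i.e.\ $\bigl((S^\sms\bacirc R^\sms)^\sms\bigr)^\sms=S^\sms\bacirc R^\sms$; hence $(R\bacirc S)^\sms=S^\sms\bacirc R^\sms$. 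The argument is purely formal once Lemma~\ref{lem.rs-sms} is available, so I do not expect a genuine obstacle; the only step that is not mere symbol-pushing is the verification in fact (i) that $R\bacirc S$ satisfies conditions a)--d), which is routine and parallels the crisp situation.
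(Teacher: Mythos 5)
Your proposal is correct and follows essentially the same route as the paper: the paper proves this proposition by remarking that it follows from Lemma~\ref{lem.rs-sms} exactly as Proposition~\ref{cpamb-comp} follows from Lemma~\ref{lem.rs-sr}, i.e.\ via the chain $R\bacirc S=(R^\sms)^\sms\bacirc(S^\sms)^\sms\subset(S^\sms\bacirc R^\sms)^\sms\subset((R\bacirc S)^\sms)^\sms\subset R\bacirc S$ and then identifying $(R\bacirc S)^\sms=(S^\sms\bacirc R^\sms)^{\sms\sms}=S^\sms\bacirc R^\sms$. Your preliminary facts (isotonicity of $(-)^\sms$, pseudo-invertibility of $R^\sms$, closedness of $A(R\bacirc S)$) are exactly the implicit ingredients the paper relies on.
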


\begin{stat}
For compacta $X,Y$ and a compact Lawson lattice $L$, a relation
$R\subset \exp X\times \exp Y\times L$ is an ambiguous
representation if and only if for all $A\in\exp X$ the set
$AR\subset \exp Y\times L$ is the subgraph of an $L$-capacity
$c_{AR}\in M_LY$, and the correspondence $A\mapsto c_{AR}$ is an
antitone mapping from $\exp X$ to $M_LY$. This mapping is:

$\bullet$ upper semicontinuous if and only if $R$ is strict;

$\bullet$ lower semicontinuous if and only if $R$ is
pseudo-invertible;

$\bullet$ continuous if and only if $R$ is pseudo-invertible and
strict.
\end{stat}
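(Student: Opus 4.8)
The plan is to establish the basic equivalence first and then dispatch the three semicontinuity clauses by unfolding the subbases that define the topology of $M_LY$.

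\textbf{The basic equivalence.} Fix $A\in\exp X$ and examine the section $AR=\{(B,\alpha)\in\exp Y\times L\mid(A,B,\alpha)\in R\}$. I would check that conditions a)--d) of the definition of an $L$-ambiguous representation assert, pointwise in $A$, exactly that $AR$ satisfies the four conditions of the lemma on subgraphs of $L$-valued capacities stated above: the ``downward'' monotonicity of $AR$ and the inclusion $\exp Y\times\{0\}\cup\{Y\}\times L\subset AR$ come from a) and c); closure under joins, $(B,\alpha),(B',\beta)\in AR\Rightarrow(B\cup B',\alpha\lor\beta)\in AR$, follows from a) (first enlarge $B$ and $B'$ to $B\cup B'$) together with b); and closedness of $AR$ is d). Hence $AR=\sub c_{AR}$ for a unique $c_{AR}\in M_LY$, and a) also gives $A'\subset A\Rightarrow AR\subset A'R$, i.e.\ $A\mapsto c_{AR}$ is antitone. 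Conversely, if every $AR$ is a capacity subgraph and $A\mapsto c_{AR}$ is antitone, the same implications run backwards and yield a)--d). This part is routine and I would not dwell on it.

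\textbf{The three semicontinuity clauses.} Via the subgraph embedding $\sub\colon M_LY\hra\exp(\exp Y\times L)$ the map $\phi:=(A\mapsto c_{AR})$ corresponds to $A\mapsto AR$, and the topology of $M_LY$ is the join of its upper topology (subbase the sets $O_+(U,V)$, $U\subop Y$, $V\subop L$) and its lower topology (subbase the sets $O_-(F,V)$, $F\subcl Y$, $V\subop L$). For each of ``$R$ strict'' and ``$R$ pseudo-invertible'' I would (i) write the matching semicontinuity of $\phi$ as ``the $\phi$-preimage of every subbasic open set is open in $\exp X$''; (ii) unfold $O_\pm(U,V)$ using $c_{AR}(B)=\max\{\alpha\mid(A,B,\alpha)\in R\}$; and (iii) recognise the resulting condition on $R$. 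For strictness I would first note that, by Lemma~\ref{int-un-sup-cl}, an $L$-ambiguous representation is a closed set if and only if all its sections $RB$ are closed --- the ``filtered'' condition of that lemma being automatic from d) --- and then identify closedness of these sections with openness of the $\phi$-preimages of the sets $O_-(F,V)$, which is the semicontinuity labelled ``upper'' in the statement. For pseudo-invertibility I would feed in the criterion of Proposition~\ref{char-lpseudo} --- for every $(A,B,\alpha)\in R$, every $\beta\ll\alpha$ and every closed neighbourhood $V\supset B$ there is a closed neighbourhood $U\supset A$ with $(U,V,\beta)\in R$ --- and match it with openness of the $\phi$-preimages of the sets $O_+(U,V)$, i.e.\ with the semicontinuity labelled ``lower''. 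The continuity clause is then the conjunction of the two, since the topology of $M_LY$ is the join of the two coarser ones. It is worth keeping in mind throughout that every cut $R_\alpha$ is itself a crisp ambiguous representation, with $A\mapsto AR_\alpha$ antitone into $GY$, so that each step reduces, cut by cut, to the crisp version of this proposition already proved above; the only genuinely new content is the behaviour in the parameter $\alpha$.

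\textbf{The main obstacle.} The hard part will be the equivalence between pseudo-invertibility and the corresponding semicontinuity of $\phi$. The difficulty is that the way-below relation ``$\ll$'' enters both the definition of $R^\sms$ and the criterion of Proposition~\ref{char-lpseudo}, and one must reconcile the quantifier ``$\beta\ll\alpha$'' with the open sets $V\subop L$ built into $O_+(U,V)$. I expect this to call for the Lawson identity $\beta\ll\alpha\iff\alpha\in\Int(\{\beta\}\ups)$ together with the observation that $O_+(U,V)$ only detects $c_{AR}(F)$ through $V\ups$; and since $V\ups$ (resp.\ $V\dns$) need not be open (resp.\ closed), the passage between ``$c_{AR}(F)\in V\ups$'' and a statement about \emph{basic} neighbourhoods of elements of $L$ has to go through the local subsemilattice bases guaranteed by the Lawson property rather than through closedness and compactness alone. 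The remaining ingredients --- the upper semicontinuity axiom for the capacities $c_{AR}$ used in the closedness arguments, and the compactness of $\exp X$ used to keep projections of closed sets closed --- I expect to be routine.
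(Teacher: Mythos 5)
The paper offers no proof of this proposition at all (its crisp counterpart is dismissed with ``the proof is straightforward''), so there is nothing to compare your route against; your sketch is the natural argument and it is essentially correct. The basic equivalence is exactly as you say: conditions a)--d) for fixed $A$ are conditions 1)--4) of the subgraph lemma for $AR$, with 2) obtained from a) and b) after enlarging both sets to their union, and antitonicity is a) in the first variable. Your matching of the two subbases to the two properties is also the right one, and it is worth making explicit that it is \emph{forced}: strictness of $R$ is equivalent to openness of all $\phi^{-1}(O_-(F,V))$ (the complement of such a preimage is the projection to $\exp X$ of $RF\cap(\exp X\times(L\setminus V\dns))$, closed by compactness of $L$ when $RF$ is closed; conversely, given $\alpha_0\not\le c_{A_0R}(F)$ pick $\beta\ll\alpha_0$ with $\beta\not\le c_{A_0R}(F)$ and use $V=L\setminus\{\beta\}\ups$ together with $\Int(\{\beta\}\ups)$ to separate), while pseudo-invertibility is equivalent to openness of all $\phi^{-1}(O_+(U,V))$ via Proposition~\ref{char-lpseudo}. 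Note, however, that this assignment is \emph{opposite} to the paper's own naming in the preliminaries, where the topology generated by the $O_+(U,V)$ is called the \emph{upper} topology on $M_LY$; under that literal reading the bullet items of the statement would be false, so you should state explicitly that ``upper'' here must be read in the multifunction sense (the convention under which the crisp version of the proposition is also true), rather than quietly adopting the correct matching. Two small corrections to your ``obstacles'' paragraph: for open $V\subop L$ both $V\ups$ and $V\dns$ \emph{are} open (join $x'\lor v$, resp.\ meet $x'\land v$, with a witness $v\in V$ and use continuity of the lattice operations), so that worry largely evaporates; and the compactness needed to keep the relevant projections closed is that of $L$ (one projects along the $L$-factor), not of $\exp X$.
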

\begin{defn}
An $L$-ambiguous representation $R$ is called
\emph{open} if both $R$ and $R^\sms$ are strict $L$-ambiguous
representations, and $(R^\sms)^\sms=R$.
\end{defn}

\begin{stat}\label{coambl-char}
A pseudo-invertible strict $L$-ambiguous representation
$R\subset\exp X\times \exp Y\times L$ is open if and only if
any of the
following statements is valid:

$\bullet$ for any open $U\subset X$ and all $\alpha\in L$ the set
of $R_\beta$-unavoidable sets of all $A\subset U$, $A\in\exp X$,
for all $\beta\in L$, $\beta\ll \alpha$, is open in the
Vietoris topology on $\exp Y$;

$\bullet$ for all $A\in\exp X$, $A\subset U\subop X$, $B\in\exp Y$
and $\alpha,\beta\in L$ such that $B\in(AR_\beta)^\perp$,
$\beta\ll\alpha$, there are open sets $V_1,\dots,V_n\subset
Y$, $\gamma\in L$ and a~\emph{closed} neighborhood $G\supset A$ in
$X$ such that $G\subset U$, $\gamma\ll\alpha$, $V_i\cap
B\ne\ES$, $i=1,\dots,n$, and each $B'\in GR_\gamma$ contains at
least one $V_i$.
\end{stat}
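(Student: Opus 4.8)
The plan is to mimic the crisp-case argument for Proposition~\ref{coamb-cap-cup} (the characterization of open ambiguous representations via f) and f')), but to carry the extra lattice parameter $\alpha$ and the way-below relation through every step. The key observation is that, since $R$ is already strict and pseudo-invertible, the only thing that has to be verified for openness is that $R^\sms$ is strict, i.e.\ that for each $\tilde A\in\exp X$ the set $R^\sms\tilde A=\{(\tilde B,\gamma)\in\exp Y\times L\mid (\tilde B,\tilde A,\gamma)\in R^\sms\}$ is closed in $\exp Y\times L$. Unwinding the definition $(R^\sms)_\alpha=\bigcap_{\beta\ll\alpha}(R_\beta)^\sms$, one gets
\begin{gather*}
R^\sms\tilde A=\{(\tilde B,\gamma)\mid \forall\beta\ll\gamma\;\;\forall A\in\exp X\;\;(\tilde B\in(AR_\beta)^\perp\implies A\cap\tilde A\ne\ES)\},
\end{gather*}
so that $(\exp Y\times L)\setminus R^\sms\tilde A$ is exactly the set of pairs $(\tilde B,\gamma)$ for which there exist $\beta\ll\gamma$ and $A\in\exp X$ with $A\subset X\setminus\tilde A$ and $\tilde B\in(AR_\beta)^\perp$. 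Thus $R^\sms\tilde A$ is closed for all $\tilde A$ if and only if, for every open $U\subset X$, the set $\{(\tilde B,\gamma)\mid \exists\beta\ll\gamma,\exists A\in\exp X, A\subset U,\ \tilde B\text{ is }R_\beta\text{-unavoidable for }A\}$ is open in $\exp Y\times L$. Projecting onto the fibre over a fixed $\gamma$, and using that $\beta\ll\gamma$ iff $\gamma\in\Int(\{\beta\}\ups)$ in a compact Lawson lattice, one sees this is equivalent to the first bulleted condition; that establishes the equivalence of openness with the first statement.

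For the equivalence of the two bulleted conditions I would argue as in the crisp f)$\iff$f'). First I would show that the first bullet forces the second: given $A\subset U$, $B\in(AR_\beta)^\perp$ with $\beta\ll\alpha$, pick $\gamma$ with $\beta\ll\gamma\ll\alpha$; by the openness asserted in the first bullet (applied with $\gamma$ in place of $\alpha$), the set of pairs $(\tilde B,\gamma')$ with $\gamma'\ll\gamma$ that are $R_{\gamma'}$-unavoidable for some $A'\subset U$ contains a Vietoris neighborhood of $(B,\gamma)$ in $\exp Y\times L$; a basic Vietoris neighborhood of $B$ has the form $\langle V_1,\dots,V_n\rangle$ with $V_i\cap B\ne\ES$ and $B\subset\bigcup V_i$, and being $R_{\gamma'}$-unavoidable for $A'$ with $A'\subset U$ means every $B'\in A'R_{\gamma'}$ meets every member of $\langle V_1,\dots,V_n\rangle$, from which I would extract (shrinking and using strictness of $R$ to pass from $A'$ to a closed neighborhood $G\supset A$ with $G\subset U$, and absorbing $A'$ into $G$) that each $B'\in GR_\gamma$ contains some $V_i$. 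Conversely, the second bullet gives, around any unavoidable pair, finitely many $V_i$ and a $\gamma\ll\alpha$ witnessing that all nearby closed sets are still unavoidable for the same $G\subset U$, which is precisely the openness required by the first bullet. Here I would invoke Proposition~\ref{coambl-char} itself is what we are proving, so instead I would cite Lemma~\ref{int-G} (to identify $GY$-valued correspondences) and the description of $\ll$ via interiors of up-sets to make the neighborhood bookkeeping precise.

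The main obstacle I anticipate is the careful interaction between the way-below relation and the Vietoris/lattice topology on $\exp Y\times L$: one must be sure that "open in the Vietoris topology on $\exp Y$, for all $\beta\ll\alpha$" assembles correctly into "open in $\exp Y\times L$" over all $\gamma$, which uses both the Lawson property of $L$ (so $\{\beta\ll\alpha\}$ is directed and $\sup$ of such $\beta$ recovers $\alpha$) and Lemma~\ref{lem.ldot}-style manipulations to combine the finitely many witnesses $V_i$ coming from different $\beta$. A second, more routine, subtlety is upgrading $A'\subset U$ to a genuine \emph{closed} neighborhood $G$ of $A$ inside $U$ while keeping unavoidability; this is exactly where strictness of $R$ (closedness of $RB$ in $\exp X\times L$, hence closedness of $R$ by Lemma~\ref{int-un-sup-cl}) is used, as in the crisp proof. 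Everything else is a direct transcription of the crisp argument for f)$\iff$f') with the parameter $\alpha$ and $\ll$ inserted in the obvious places.
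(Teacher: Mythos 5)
Your proposal is correct and follows essentially the same route as the paper: openness reduces to strictness of $R^\sms$, the complement of $R^\sms\tilde A$ is identified with the set of $R_\beta$-unavoidable pairs for $A\subset U=X\setminus\tilde A$, $\beta\ll\alpha$ (giving the first bullet, with the slice-versus-product openness handled by interpolation of $\ll$ and monotonicity), and the second bullet is extracted from a basic neighborhood $\langle V_1,\dots,V_n\rangle\times W_\alpha$ via the closed neighborhood $G=X\setminus U'$ and $\gamma=\inf O_\alpha\ll\alpha$. You are in fact more explicit than the paper about assembling the fixed-$\alpha$ openness into closedness in $\exp Y\times L$; the only step left implicit on both sides is the combinatorial equivalence that all $B'\in\langle V_1,\dots,V_n\rangle$ meet every member of $GR_\gamma$ iff every member of $GR_\gamma$ contains some $V_i$.
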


\begin{proof}
To prove that the first statement is equivalent to $R$ being open,
it is sufficient to observe that for each $\tilde A\in\exp X$ the
complement $\exp Y\setminus R^\sms\tilde A$ is equal to the set of
$R_\beta$-unavoidable sets for all $A\in\exp X$, $A\subset
U=X\setminus \tilde A$, $\beta\ll\alpha$.

Let $R$ be an open $L$-ambiguous representation, hence $R^\sms$ is
a closed subset, and let $A\in\exp X$ be such that
$B\in(AR_\beta)^\perp$ for some $\beta\ll\alpha$. If we take
$A_0=X\setminus U$, then $(A_0,B,\alpha)\notin R^\sms$, hence there
must exist neighborhoods $U'\supset A_0$ in $X$, $\langle
V_1,\dots, V_n\rangle\ni B$ in $\exp Y$, $W_\alpha\ni\alpha$ in $L$
such that, for all $A'\in\exp X$, $A'\subset U'$, $B'\in\langle
V_1,\dots, V_n\rangle$, $\alpha'\in W_\alpha$, there is $A''\in\exp
X$ such that $A'\cap A''=\ES$ and $B'$ is $R_{\beta'}$-unavoidable
for $A''$ for some $\beta'\ll\alpha'$. If necessary, we can
make $V_i$ smaller to be disjoint. We can also choose $U'$ so that
$\Cl U'\cap A=\ES$. Moreover, $B'$ is $R_{\beta'}$-unavoidable for
$G=X\setminus U'$, and $G$ is a closed neighborhood of $A$. The
lattice $L$ is compact Lawson, hence there is an open neighborhood
$O_\alpha\ni\alpha$, $O_\alpha\subset W_\alpha$, such that $\inf
O_\alpha\in W_\alpha$. Let $\gamma=\inf O_\alpha$, then
$\gamma\ll\alpha$, and each $B'\in\langle V_1,\dots,
V_n\rangle$ is $R_{\gamma}$-unavoidable for $G$. It is possible if
and only if each element of $GR_{\gamma}$ is a superset of some of
$V_i$. Necessity of the second statement is proved.

Now the proof of its sufficiency is obvious.
\end{proof}

\begin{stat}\label{coambl-comp}
Let $*:L\times L\to L$ be open, $R\subset\exp X\times \exp Y\times
L$ and $S\subset\exp Y\times\exp Z\times L$ open $L$-ambiguous
representations. Then $R\acirc S\subset\exp X\times
\exp Z\times L$ is an open $L$-ambiguous representation as well.
\end{stat}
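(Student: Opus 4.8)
The plan is to check, for $R\acirc S$, the three requirements in the definition of an open $L$-ambiguous representation: that $R\acirc S$ is strict, that its converse $(R\acirc S)^\sms$ is strict, and that $((R\acirc S)^\sms)^\sms=R\acirc S$. The useful preliminary observation is that the openness of $R$ and of $S$ says precisely that all four relations $R$, $S$, $R^\sms$, $S^\sms$ are strict $L$-ambiguous representations and that $R$, $S$ are pseudo-invertible; and that $*$, being open, is in particular continuous, so the earlier proposition (to the effect that $R'\acirc S'\in\Csamb_L$ whenever $R'$, $S'$ are strict and $*$ is continuous) is available for each of these strict relations.

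First I would apply that earlier proposition to $R,S\in\Csamb_L$ to obtain $R\acirc S\in\Csamb_L(X,Z)$; in particular $R\acirc S$ is closed, and since the compositions $\acirc$ and $\bacirc$ coincide on strict $L$-ambiguous representations, $R\acirc S=R\bacirc S$. Hence Proposition~\ref{cpambl-comp}, applied to the pseudo-invertible $R$ and $S$, gives that $R\acirc S=R\bacirc S$ is pseudo-invertible with $(R\acirc S)^\sms=(R\bacirc S)^\sms=S^\sms\bacirc R^\sms$. Now $S^\sms$ and $R^\sms$ are themselves strict, so the same earlier proposition gives that $S^\sms\acirc R^\sms$ is strict (hence closed), whence $S^\sms\bacirc R^\sms=S^\sms\acirc R^\sms$; thus $(R\acirc S)^\sms=S^\sms\acirc R^\sms\in\Csamb_L(Z,X)$. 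Collecting the three conclusions --- $R\acirc S$ strict, $(R\acirc S)^\sms$ strict, and $((R\acirc S)^\sms)^\sms=R\acirc S$ --- shows that $R\acirc S$ is open.

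The only delicate point on this route is the bookkeeping: one must track the directions of the four relations and which of $R$, $S$, $R^\sms$, $S^\sms$, $R\acirc S$, $(R\acirc S)^\sms$ lies in $\Csamb_L$ at each stage, and be sure that the identification $\bacirc=\acirc$ (legitimate only on strict relations, which is why it is applied here to $R,S$ and again to $S^\sms,R^\sms$) is invoked correctly, so that Proposition~\ref{cpambl-comp} and the strictness proposition chain together as claimed. A genuinely harder obstacle would arise only if one insisted on a self-contained argument bypassing the converse calculus and instead verifying the criterion of Proposition~\ref{coambl-char} directly for $R\acirc S$: there one would have to show that, for open $U\subset X$ and $\alpha\in L$, the family of $(R\acirc S)_\gamma$-unavoidable sets of all closed subsets of $U$, taken over $\gamma\ll\alpha$, is Vietoris-open in $\exp Z$; expanding $(R\acirc S)_\gamma$ in terms of $R_\beta$, $S_\delta$ with $\beta*\delta$ large forces a passage, via Lemma~\ref{lem.ldot}, to $\beta'\ll\beta$ and $\delta'\ll\delta$, and it is exactly there that the openness of $*$ would have to be combined with the openness of $R$ and of $S$. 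The first route avoids this, so I would take it.
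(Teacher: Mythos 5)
Your proposal is correct and follows essentially the same route as the paper: the paper's own proof is the one-line observation that, by Proposition~\ref{cpambl-comp}, $(R\acirc S)^\sms$ equals the composition of the two strict representations $S^\sms$ and $R^\sms$ and is therefore strict, which is exactly your chain of reductions spelled out with the $\bacirc=\acirc$ identifications made explicit. The extra bookkeeping you supply (strictness of $R\acirc S$ itself, pseudo-invertibility, and the legitimacy of replacing $\bacirc$ by $\acirc$ on strict relations) is precisely what the paper leaves implicit.
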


\begin{proof}
By Proposition~\ref{cpambl-comp}, the relation $(R\acirc S)^\sms$
is equal to the composition of two strict $L$-ambiguous
representations $S^\sms$ and $R^\sms$, hence is a strict
$L$-ambiguous representation itself.
\end{proof}

By Proposition~\ref{char-lpseudo} the composition of
pseudo-invertible $L$-ambiguous representations is associative.
Thus we obtain a collection of the \emph{categories of compacta and
pseudo-invertible $L$-ambiguous representations} $\Cpamb^*_L$ with
the same objects (=compacta) and morphisms (=pseudo-invertible
$L$-ambiguous representations), but with different laws of
composition $\bacirc$ parameterized by certain t-norms on the
lattice $L$. For all of them there is an involutive isomorphism
$(-)^\sms:(\Cpamb^*_L)^{op}\to\Cpamb^*_L$ which preserves objects.
For $*$ continuous, each of these categories contains a subcategory
$\Coamb^*_L$ with all \emph{open $L$-ambiguous representations} as
morphisms and the~composition law $\bacirc=\acirc$. We also denote
the intersection of $\Cpamb^*_L$ and $\Csamb^*_L$ by $\Cpsamb^*_L$.
The identity arrows for these categories are the same as in
$\Csamb^*_L$.

Again, if $*=\land$, we omit it in the notation for categories.

Each ambiguous representation $R\subset \exp X\times \exp Y$ can be
identified with an $L$-ambiguous representation $R_L$ defined as
follows:
$$
R_L=\{(A,B,\alpha)\in \exp X\times \exp Y\mid (A,B)\in R
\text{ or }\alpha=0\}.
$$
Then the categories of $\Csamb$, $\Cpamb$, $\Cpsamb$, and $\Coamb$
are embedded into the respective categories of $\Csamb^*_L$,
$\Cpamb^*_L$, $\Cpsamb^*_L$, and $\Coamb^*_L$ (independently of $*$).

When we attempt to study order and topological properties of sets
of (strict, open) $L$-ambiguous representations in the same manner
as we did before for (non-fuzzy) representations, we encounter new
difficulties. If $\Camb_L(X,Y)$ is ordered by inclusion, then the
top and the bottom elements of this poset are obvious:
$$
\top_{X,Y,L}=\exp X\times \exp Y\times L, \;\;
\bot_{X,Y,L}=\exp X\times\{Y\}\times L\cup
\exp X\times \exp Y\times\{0\}.
$$
If $R,S\in\Camb_L(X,Y)$, then $R\cap S\in\Camb(X,Y)$, but, for
$|X|>1$, $|Y|>1$ and a non-linearly ordered $L$, not always $R\cup
S\in\Camb_L(X,Y)$. E.g. let $\alpha,\beta\in L$ be incomparable,
$x_1,x_2\in X$, $y\in Y$ and
\begin{gather*}
R=
\bot_{X,Y,L}\cup
\{(\{x_1\},F,\gamma)
\mid
y\in F\in\exp Y, \gamma\in L, 0\le \gamma\le\alpha
\}
,\\
S=
\bot_{X,Y,L}\cup
\{(\{x_2\},F,\gamma)
\mid
y\in F\in\exp Y, \gamma\in L, 0\le \gamma\le\beta
\},
\end{gather*}
then $R$ and $S$ are $L$-ambiguous representations, but, if
$x_1=x_2$, then $R\cup S$ is not. This implies that, for an
infinite $X$, $|Y|>1$, and a non-linearly ordered $L$, the set
$\Csamb_L(X,Y)$ is not closed in $\exp(\exp X\times\exp Y\times
L)$, although its elements are closed sets.

It is easy to describe suprema and infima in $\Camb_L(X,Y)$ and
$\Csamb_L(X,Y)$. For a set $\CCR\subset\Camb_L(X,Y)$ its lowest
upper bound is a relation $R_0\subset \exp X\times\exp Y\times L$
defined by the equality
\begin{gather*}
AR_0=
\Cl\bigl\{(B,\gamma)\in\exp Y\times L
\mid
\gamma\le\sup\{\alpha\in L\mid
\\
(A,B,\alpha)\in R
\text{ for some }R\in\CCR\}\bigr\}
\end{gather*}
for all $A\in\exp X$.

Similarly, if a strict $L$-ambiguous representation $S$ is an upper
bound of a subset $\CCR$ in $\Csamb_L(X,Y)$, then $S$ must contain
a set
\begin{gather*}
R_0=
\Cl\bigl\{(A,B,\gamma)\in\exp X\times\exp Y\times L
\mid
\gamma\le\sup\{\alpha\in L\mid
\\
(A,B,\alpha)\in R
\text{ for some }R\in\CCR\}\bigr\}.
\end{gather*}
It is obvious that $R_0$ is a strict $L$-ambiguous representation
and a least upper bound of $\CCR$.

The greatest lower bound of a subset $\CCR$ of $\Camb_L(X,Y)$ or
$\Csamb_L(X,Y)$ is simply the intersection of $\CCR$.

In both these sets the pairwise supremum of $R_1$, $R_2$ is
determined by the equality
$$
R_1\lor R_2=
\{(A,B,\alpha_1\lor\alpha_2)
\mid
(A,B,\alpha_1)\in R_1, (A,B,\alpha_2)\in R_2\}.
$$

\begin{stat}\label{lor-land-sms}
Let $R,S\in\Camb_L(X,Y)$, then $(R\lor S)^\sms=R^\sms\lor S^\sms$,
$(R\land S)^\sms=R^\sms\land S^\sms$.
\end{stat}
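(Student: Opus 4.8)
The plan is to base everything on a closed description of the converse operation $(-)^\sms$. Since both sides of each identity are $L$-ambiguous representations, it is enough to compare them pointwise as $L$-fuzzy relations $\exp Y\times\exp X\to L$. The meet identity then drops out at the level of $\alpha$-cuts: using $(R\land S)_\beta=R_\beta\cap S_\beta$ together with the identity $(T\cap T')^\sms=T^\sms\cap T'^\sms$ already proved for crisp ambiguous representations, $\bigl((R\land S)^\sms\bigr)_\alpha=\bigcap_{\beta\ll\alpha}\bigl(R_\beta\cap S_\beta\bigr)^\sms=\bigcap_{\beta\ll\alpha}\bigl((R_\beta)^\sms\cap(S_\beta)^\sms\bigr)=(R^\sms)_\alpha\cap(S^\sms)_\alpha=(R^\sms\land S^\sms)_\alpha$. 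So the whole difficulty is the join identity.

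For it I would first establish, for an arbitrary $L$-ambiguous representation $R$ and any $\tilde A\in\exp X$, $\tilde B\in\exp Y$, the formula $R^\sms(\tilde B,\tilde A)=\inf\{\theta_R(A)\mid A\in\exp X,\ A\cap\tilde A=\ES\}$, where $\theta_R(A)=\sup\{R(A,B)\mid B\in\exp Y,\ B\cap\tilde B=\ES\}$ (empty infimum $=1$, empty supremum $=0$). Unfolding $(R^\sms)_\alpha=\bigcap_{\beta\ll\alpha}(R_\beta)^\sms$ and the definition of the crisp converse, $(\tilde B,\tilde A)\in(R_\beta)^\sms$ iff for every $A$ disjoint from $\tilde A$ there is $B$ disjoint from $\tilde B$ with $R(A,B)\ge\beta$. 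As $R$ is isotone in its second argument and $\{B\mid B\cap\tilde B=\ES\}$ is closed under finite unions, $\{R(A,B)\mid B\cap\tilde B=\ES\}$ is up-directed with supremum $\theta_R(A)$, so the existence of such a $B$ is implied by $\beta\ll\theta_R(A)$ and implies $\beta\le\theta_R(A)$. With $\Theta$ denoting the claimed value, this sandwiches $\{\beta\mid(\tilde B,\tilde A)\in(R_\beta)^\sms\}$ between $\{\beta\mid\beta\ll\Theta\}$ and $\{\beta\mid\beta\le\Theta\}$; since $R^\sms$ is again an $L$-ambiguous representation, $R^\sms(\tilde B,\tilde A)$ is the greatest $\alpha$ with $(\tilde B,\tilde A)\in(R_\beta)^\sms$ for all $\beta\ll\alpha$, and continuity of $L$ (so $\Theta=\sup\{\beta\mid\beta\ll\Theta\}$) forces $R^\sms(\tilde B,\tilde A)=\Theta$.

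Applying the formula to $R\lor S\in\Camb_L(X,Y)$ and using the lattice identity $\sup_B\bigl(R(A,B)\lor S(A,B)\bigr)=\bigl(\sup_B R(A,B)\bigr)\lor\bigl(\sup_B S(A,B)\bigr)$, the inner supremum is $\theta_R(A)\lor\theta_S(A)$, so $(R\lor S)^\sms(\tilde B,\tilde A)=\inf_{A\cap\tilde A=\ES}\bigl(\theta_R(A)\lor\theta_S(A)\bigr)$, whereas $R^\sms(\tilde B,\tilde A)\lor S^\sms(\tilde B,\tilde A)=\bigl(\inf_{A\cap\tilde A=\ES}\theta_R(A)\bigr)\lor\bigl(\inf_{A\cap\tilde A=\ES}\theta_S(A)\bigr)$. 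The inequality ``$\ge$'' between these is immediate; the content is ``$\le$'', i.e.\ that the infimum distributes over $\lor$ here, and \emph{this is the crux}, since infima do not distribute over joins in a general lattice. It is salvaged by the geometry of the index set: $\{A\in\exp X\mid A\cap\tilde A=\ES\}$ is up-directed (closed under finite unions) and $\theta_R,\theta_S$ are antitone in $A$ (because $R,S$ are antitone in their first argument), so $A\mapsto(\theta_R(A),\theta_S(A))$ is a net in $L\times L$ that decreases to $\bigl(\inf_A\theta_R(A),\inf_A\theta_S(A)\bigr)$; because $L$ is a compact Lawson lattice, $\lor:L\times L\to L$ is continuous and decreasing nets converge to their infima, whence $A\mapsto\theta_R(A)\lor\theta_S(A)$ has infimum $\bigl(\inf_A\theta_R(A)\bigr)\lor\bigl(\inf_A\theta_S(A)\bigr)$. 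Together with the displayed expressions this gives $(R\lor S)^\sms=R^\sms\lor S^\sms$.
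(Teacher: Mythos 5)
Your proof is correct, but it takes a genuinely different route from the paper's. For the meet, the paper argues directly: given $(B,A,\gamma)\in R^\sms\land S^\sms$ it produces, for each $A'$ disjoint from $A$ and each $\gamma'\ll\gamma$, witnesses $B_1\in A'R_{\gamma'}$ and $B_2\in A'S_{\gamma'}$ missing $B$ and takes $B'=B_1\cup B_2$; your cut-by-cut reduction to the crisp identity $(T\cap T')^\sms=T^\sms\cap T'^\sms$ packages exactly this union-of-witnesses step, but be aware that the paper states the crisp proposition without proof and refers \emph{forward} to this very result, so to avoid circularity you should include the one-line verification $(A(T\cap T'))^\perp=(AT)^\perp\cup(AT')^\perp$ (valid because $AT$, $AT'$ are closed under supersets). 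For the join, the paper stays at the level of cuts and traversals: from $(B,A,\gamma)\notin R^\sms\lor S^\sms$ it uses compactness of $\{(\alpha,\beta)\in L^2\mid\alpha\lor\beta\ge\gamma\}$ to extract finitely many levels $\alpha_1,\dots,\alpha_m,\beta_1,\dots,\beta_n$ and, by directedness, a single $A'$ with $B\in\bigcap_i(A'R_{\alpha_i})^\perp\cap\bigcap_j(A'S_{\beta_j})^\perp$, and then interpolates a $\gamma'\ll\gamma$. You instead isolate the closed-form expression $R^\sms(\tilde B,\tilde A)=\inf\{\,\sup\{R(A,B)\mid B\cap\tilde B=\ES\}\mid A\cap\tilde A=\ES\,\}$ and reduce the whole identity to commuting a filtered infimum with $\lor$, which you settle by convergence of decreasing nets in the compact lattice together with continuity of $\lor$. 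The two arguments spend the compactness of $L$ in different places (finite subcover of the level set versus net convergence), and both ultimately exploit that $\{A\in\exp X\mid A\cap\tilde A=\ES\}$ is up-directed and that $R$ is antitone in its first and isotone in its second argument; your inf--sup formula is of independent interest, makes the nontrivial inequality transparent, and cleanly separates the lattice-theoretic content from the hyperspace bookkeeping, at the cost of explicitly invoking the continuity of $L$ (that is, $\alpha=\sup\{\beta\mid\beta\ll\alpha\}$), which the paper also uses, though only implicitly.
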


\begin{proof}
The operation $(-)^\sms$ is isotone, hence $(R\lor S)^\sms\supset
R^\sms\lor S^\sms$, $(R\land S)^\sms\subset R^\sms\land S^\sms$.

Let $(B,A,\gamma)\notin R^\sms\lor S^\sms$, i.e. for all
$\alpha,\beta\in L$ such that $\alpha\lor\beta\ge\gamma$, we have
either $(B,A,\alpha)\notin R^\sms$ or $(B,A,\beta)\notin S^\sms$,
i.e. either there are $A'\in\exp X$, $\alpha'\ll \alpha$ such
that $A'\cap A=\ES$, $B\in A'R_{\alpha'}$, or there are $A'\in\exp
X$, $\beta'\ll
\beta$ such that $A'\cap A=\ES$, $B\in A'S_{\beta'}$. The set
$\{(\alpha,\beta)\in L^2\mid
\alpha\lor\beta \ge \gamma\}$ is compact, therefore there is a finite
collection $\alpha_1,\dots,\alpha_m,\beta_1,\dots,\beta_n\in L$ and
a closed nonempty set $A'\subset X$ such that $A'\cap A=\ES$,
$$
B\in (A'R_{\alpha_1})^\perp\cap\dots\cap (A'R_{\alpha_m})^\perp\cap
(A'S_{\beta_1})^\perp\cap\dots\cap (A'S_{\beta_n})^\perp,
$$
and, for all $\alpha,\beta\in L$ such that
$\alpha\lor\beta\ge\gamma$, either $\alpha_i\ll \alpha$ for
some $1\le i\le m$, or $\beta_j\ll \beta$ for some $1\le j\le
n$. Hence there is $\gamma'\ll \gamma$ such that, for all
$\alpha,\beta\in L$ such that $\alpha\lor\beta\ge\gamma'$, either
$\alpha_i\ll
\alpha$ for some $1\le i\le m$, or $\beta_j\ll \beta$ for some
$1\le j\le n$. Then $(R\lor S)_{\gamma'}\subset R_{\alpha_1}\cup
\dots \cup R_{\alpha_m}\cup S_{\beta_1}\cup
\dots \cup S_{\beta_m}$, and
$$
(A'(R\lor S)_{\gamma'})^\perp
\supset
(A'R_{\alpha_1})^\perp\cap\dots\cap (A'R_{\alpha_m})^\perp\cap
(A'S_{\beta_1})^\perp\cap\dots\cap (A'S_{\beta_n})^\perp,
$$
therefore $B\in (A'(R\lor S)_{\gamma'})^\perp$ for some
$\gamma'\ll \gamma$, $A'\in\exp X$, $A'\cap A=\ES$, thus
$(B,A,\gamma)\notin (R\lor S)^\sms$. We have proved that
$R^\sms\lor S^\sms=(R\lor S)^\sms$.

Let $(B,A,\gamma)\in R^\sms\land S^\sms$, then for all
$\gamma'\ll \gamma$ and $A'\in\exp X$, $A'\cap A=\ES$, there
are $B_1\in A'R_{\gamma'}$ and $B_2\in A'S_{\gamma'}$ such that
$B_1\cap B=B_2\cap B=\ES$. Then $B'=B_1\cup B_2\in A'(R\land
S)_{\gamma'}$, $B'\cap B=\ES$, hence $(B,A,\gamma)\in (R\land
S)^\sms$. The equality $R^\sms\land S^\sms=(R\land S)^\sms$ is also
proved.
\end{proof}

\begin{cons}\label{coambl-cap-sup}
If $R,S$ are elements of $\Cpamb_L(X,Y)$ (or $\Cpsamb_L(X,Y)$, or
$\Coamb_L(X,Y)$), then $R\lor S$ and $R\land S$ are also in
$\Cpamb_L(X,Y)$ (resp.\ in $\Cpsamb_L(X,Y)$ or $\Coamb_L(X,Y)$).
\end{cons}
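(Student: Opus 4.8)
The plan is to check, for each of the three classes in turn, the property that defines membership in it, drawing on Proposition~\ref{lor-land-sms} together with the facts recorded just before this corollary: that $\Camb_L(X,Y)$ and $\Csamb_L(X,Y)$ are closed under the pairwise supremum $R\lor S$ (given by the displayed formula) and under the intersection $R\land S=R\cap S$. In particular $R\lor S$ and $R\land S$ are already known to be $L$-ambiguous representations, and to be strict whenever $R,S$ are; so only pseudo-invertibility, and (for the open case) strictness of the converse, are genuinely new.

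First I would handle $\Cpamb_L(X,Y)$. Applying Proposition~\ref{lor-land-sms} twice gives
$$
(R\lor S)^{\sms\sms}=\bigl(R^\sms\lor S^\sms\bigr)^\sms=R^{\sms\sms}\lor S^{\sms\sms}
\quad\text{and}\quad
(R\land S)^{\sms\sms}=\bigl(R^\sms\land S^\sms\bigr)^\sms=R^{\sms\sms}\land S^{\sms\sms}.
$$
When $R,S\in\Cpamb_L(X,Y)$ we have $R^{\sms\sms}=R$ and $S^{\sms\sms}=S$, so the two right-hand sides are $R\lor S$ and $R\land S$, which are therefore pseudo-invertible. For $\Cpsamb_L(X,Y)=\Cpamb_L(X,Y)\cap\Csamb_L(X,Y)$ it now suffices to recall that $\Csamb_L(X,Y)$ is closed under $\lor$ and $\cap$; if one prefers a direct argument, $(R\cap S)B=RB\cap SB$ is closed, and $(R\lor S)B$ is the image of the compact set $\{(A,\alpha_1,\alpha_2)\mid(A,\alpha_1)\in RB,\ (A,\alpha_2)\in SB\}\subset\exp X\times L\times L$ under the continuous map $(A,\alpha_1,\alpha_2)\mapsto(A,\alpha_1\lor\alpha_2)$ (here $\lor$ is continuous because $L$ is a compact Lawson lattice), hence closed.

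It remains to treat $\Coamb_L(X,Y)$, whose elements are characterized by $R$ strict, $R^\sms$ strict, and $(R^\sms)^\sms=R$. If $R,S$ are open, then $R\lor S$ and $R\land S$ are strict and pseudo-invertible by the above, and Proposition~\ref{lor-land-sms} gives $(R\lor S)^\sms=R^\sms\lor S^\sms$ and $(R\land S)^\sms=R^\sms\land S^\sms$; since $R^\sms$ and $S^\sms$ are strict, so are this join and this meet (again by closure of $\Csamb_L(Y,X)$ under $\lor,\cap$, or by the compact-image argument with the roles of $X$ and $Y$ interchanged). Hence $R\lor S,R\land S\in\Coamb_L(X,Y)$. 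The only step that is not purely formal is this last one: strictness of $R\lor S$ does not by itself guarantee strictness of $(R\lor S)^\sms$, so openness must be recovered through the converse identities of Proposition~\ref{lor-land-sms}; I expect this bookkeeping — keeping straight which closure facts are available for which class — to be the main (and only mild) source of friction.
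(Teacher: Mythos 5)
Your argument is correct and is precisely the one the paper intends: the corollary is stated without proof immediately after Proposition~\ref{lor-land-sms} because it follows by applying that proposition (twice, for pseudo-invertibility) together with the already-established closure of $\Csamb_L$ under $\lor$ and $\cap$. Your extra compact-image verification of strictness is a harmless supplement, not a deviation.
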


Thus $\Cpamb_L(X,Y)$, $\Cpsamb_L(X,Y)$ and $\Coamb_L(X,Y)$ are
\emph{sublattices} of the lattice $\Camb_L(X,Y)$.

To address topological issues, we define for each
$R\in\Csamb_L(X,Y)$ a relation $R^\cup\subset
\exp^2X\times \exp Y\times L$ by the equality
\begin{gather*}
R^\cup=
\bigl\{
(\CCA,B,\gamma)
\mid
\CCA\in\exp^2X,B\in\exp Y,\gamma\in L,
\\
\gamma\le
\sup\{\alpha\in L\mid
(A,B,\alpha)\in R\text{ for some }A\in \CCA\}
\bigr\}.
\end{gather*}

Observe that for $A\in\exp X$, $B\in\exp Y$, $\alpha\in L$ and
$R\in\Csamb_L(X,Y)$ the inclusions $(A,B,\alpha)\in R$ and
$(\{A\},B,\alpha)\in R^\cup$ are equivalent, therefore the
correspondence $R\mapsto R^\cup$ is injective. Let us consider the
image of this correspondence.

\begin{stat}
The relations $R^\cup$ for all $R\in \Csamb_L(X,Y)$ are closed sets
and form a~closed subset of $\exp(\exp^2X\times \exp Y\times L)$.
\end{stat}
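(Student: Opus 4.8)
The plan is to establish the two assertions in turn: that $R^\cup$ is closed for each $R\in\Csamb_L(X,Y)$, and that the collection of all such $R^\cup$ is closed in $\exp(\exp^2X\times\exp Y\times L)$. For the first, observe that by definition $R^\cup$ consists of the triples $(\CCA,B,\gamma)$ with $\gamma\le\sup\Psi_{(\CCA,B)}$, where $\Psi_{(\CCA,B)}=\{\alpha\in L\mid(A,B,\alpha)\in R\text{ for some }A\in\CCA\}$ is the fibre over $(\CCA,B)$ of the set
$$
\Psi=\{(\CCA,B,\alpha)\in\exp^2X\times\exp Y\times L\mid(A,B,\alpha)\in R\text{ for some }A\in\CCA\}.
$$
Now $\Psi$ is closed: it is the image of $\{(\CCA,A,B,\alpha)\mid A\in\CCA,\ (A,B,\alpha)\in R\}$ — which is closed, being cut out by the closed conditions $A\in\CCA$ (the graph of the membership relation $\exp^2X\leftrightarrow\exp X$ is closed) and $(A,B,\alpha)\in R$ — under the projection forgetting the $\exp X$-coordinate, and that projection is a closed map since $\exp X$ is compact. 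Each fibre $\Psi_{(\CCA,B)}$ is then a nonempty (it contains $0$) closed subset of $L$. If a net $(\CCA_\lambda,B_\lambda,\gamma_\lambda)$ in $R^\cup$ converges to $(\CCA,B,\gamma)$, pass to a subnet along which $\Psi_{(\CCA_\lambda,B_\lambda)}\to F$ in $\exp L$; since $\Psi$ is closed, $F\subset\Psi_{(\CCA,B)}$ and hence $\sup F\le\sup\Psi_{(\CCA,B)}$; since $\sup\colon\exp L\to L$ is continuous (the Lawson property of $L$) we get $\sup\Psi_{(\CCA_\lambda,B_\lambda)}\to\sup F$, and since the order of $L$ is closed the inequalities $\gamma_\lambda\le\sup\Psi_{(\CCA_\lambda,B_\lambda)}$ pass to the limit to give $\gamma\le\sup F\le\sup\Psi_{(\CCA,B)}$, i.e.\ $(\CCA,B,\gamma)\in R^\cup$.

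For the second assertion, I would first identify the collection $\{R^\cup\mid R\in\Csamb_L(X,Y)\}$ with the set of all closed $\CCT\subset\exp^2X\times\exp Y\times L$ satisfying: (i) $\CCT\supset\exp^2X\times\{Y\}\times L\cup\exp^2X\times\exp Y\times\{0\}$; (ii) $(\CCA,B,\gamma)\in\CCT$ implies $(\CCA',B',\gamma')\in\CCT$ whenever $B\subset B'$, $\gamma'\le\gamma$, and every $A\in\CCA$ contains some member of $\CCA'$ (this subsumes monotonicity in all three variables and antitonicity in the first variable on singletons); and (iii) the $\cup$-generation identity $(\CCA,B,\gamma)\in\CCT\iff\gamma\le\sup\{\alpha\in L\mid(\{A\},B,\alpha)\in\CCT\text{ for some }A\in\CCA\}$. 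That every $R^\cup$ satisfies (i)--(iii) is a direct check from the definition of $R^\cup$ and the fact that $R$ is the subgraph of a strict $L$-ambiguous representation (using $(\{A\},B,\alpha)\in R^\cup\iff\alpha\le R(A,B)$). Conversely, (iii) forces each set $\{\alpha\in L\mid(\{A\},B,\alpha)\in\CCT\}$ to equal $\{\alpha\in L\mid\alpha\le\sup\{\beta\mid(\{A\},B,\beta)\in\CCT\}\}$, i.e.\ a principal ideal, so that $R:=\{(A,B,\alpha)\mid(\{A\},B,\alpha)\in\CCT\}$ is a subgraph; then (i), (ii) and closedness of $\CCT$ give $R\in\Csamb_L(X,Y)$, and (iii) reads precisely $\CCT=R^\cup$. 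So it remains to verify that (i)--(iii) are inherited by Vietoris limits.

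Conditions (i) and (ii) are ``closed'' conditions on closed subsets and should be preserved under limits by the routine hyperspace arguments already used in the paper for Definition~\ref{def.sar-short} and for the subsemilattice $\Csamb(X,Y)$. A further ingredient is needed to keep the limit a subgraph, namely that a Vietoris limit of principal ideals $\{\alpha\in L\mid\alpha\le\eta_\lambda\}$ with $\eta_\lambda\to\eta$ is again the principal ideal $\{\alpha\in L\mid\alpha\le\eta\}$ — this follows from continuity of $\wedge$, continuity of $\sup\colon\exp L\to L$ and closedness of the order, and it is exactly the point that rescues the present collection where the analogous statement fails for $\Csamb_L(X,Y)$ inside the smaller hyperspace $\exp(\exp X\times\exp Y\times L)$. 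The remaining, and I expect hardest, step is the stability of (iii) under the limit. For one inclusion one has $(\CCA,B,\gamma)\in\CCT$ realized as $\lim(\CCA_\lambda,B_\lambda,\gamma_\lambda)$ with $\gamma_\lambda\le\sup_{A\in\CCA_\lambda}R_\lambda(A,B_\lambda)$, and must bound $\gamma$ by $\sup\{\alpha\mid(\{A\},B,\alpha)\in\CCT,\ A\in\CCA\}$; for the other, given $(\CCA,B,\gamma)$ with $\gamma\le\sup_{A\in\CCA}R(A,B)$ for the singleton part $R$ of $\CCT$, one must manufacture an approximating net inside the $R_\lambda^\cup$, which means selecting families in $\exp^2X$ coherently over the whole (possibly infinite) set $\CCA$.

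The obstruction in both directions is that, $L$ being merely a lattice and each $R_\lambda(\,\cdot\,,B_\lambda)$ merely upper semicontinuous, the supremum $\sup_{A\in\CCA_\lambda}R_\lambda(A,B_\lambda)$ need not be attained at any single $A\in\CCA_\lambda$; one therefore cannot reduce to tracking a single point of $\CCA_\lambda$, and must instead work with the closed sets $\{(A,\alpha)\mid(A,B_\lambda,\alpha)\in R_\lambda\}\cap(\CCA_\lambda\times L)$ (closed because $R_\lambda$ is strict), their projections to $L$, and the suprema of those, passing to subnets and invoking compactness of $\exp^2X$ and $\exp L$ together with continuity of $\sup$, exactly as in the proof of the first assertion. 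Carrying out this selection uniformly over $\CCA$, and matching it against the Vietoris convergence $\CCA_\lambda\to\CCA$, is the main technical burden of the argument.
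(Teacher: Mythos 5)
Your argument for the first assertion is correct and is a legitimate alternative to the paper's: you obtain closedness of $R^\cup$ by writing it via the closed set $\Psi$ (image of a closed set under projection along the compact factor $\exp X$) together with continuity of $\sup\colon\exp L\to L$, whereas the paper writes $R^\cup=(R_\bullet)^{\sup}$ and invokes a lemma that $T\mapsto T^{\sup}$ produces closed sets. These amount to the same mechanism and either is fine.

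The second assertion, however, is not proved in your proposal; there is a genuine gap exactly where you flag it. Your plan is to characterize $\{R^\cup\mid R\in\Csamb_L(X,Y)\}$ by conditions (i)--(iii) and then show these survive Vietoris limits, but the stability of the ``$\cup$-generation identity'' (iii) is the entire substance of the claim, and you explicitly leave it as ``the main technical burden'' without carrying it out. The difficulty you correctly identify --- that $\sup_{A\in\CCA_\lambda}R_\lambda(A,B_\lambda)$ need not be attained and that a selection must be made coherently over all of $\CCA$ while tracking Vietoris convergence $\CCA_\lambda\to\CCA$ --- is real, and a direct net-chasing argument along these lines is delicate enough that it cannot be waved through. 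The paper avoids this entirely by a structural device: it builds two continuous, idempotent, extensive operations on $\exp(\exp^2X\times\exp Y\times L)$, namely $T\mapsto T^\subset$ (via the continuous map $\CCA\mapsto\CCA^\subset$ into $G(\exp X)$) and $T\mapsto T^{\sup}$, combines them into $T\mapsto T^+$, and then characterizes the image of $(-)^\cup$ as the solution set of the equations $T=T^+=(T\cap\{(\{A\},B,\alpha)\mid A\in\exp X,\,B\in\exp Y,\,\alpha\in L\})^+$. Since these are coincidence conditions between continuous maps into a Hausdorff space, the solution set is closed with no limit argument over $\CCA$ required. If you want to complete your proof along your own lines, you would either have to supply the uniform selection argument for (iii) in full, or replace step (iii) by an idempotent-continuous-operator characterization of this kind --- which is, in effect, rediscovering the paper's proof.
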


In order to prove this proposition, we define an operation on
$\exp(\exp^2X\times \exp Y\times L)$ such that all sets of the form
$R^\cup$ are fixed points of this operation.

For a compactum $X$ and a closed non-empty set $\CCA\subset \exp X$, the set
$$
\CCA^\subset=
\{
\CCB\subcl \exp X
\mid
\text{for all }A\in\CCA
\text{ there is }B\in\CCB
\text{ such that }B\subset A
\}
$$
is an inclusion hyperspace, i.e. an element of $G(\exp X)\subset
\exp^3X$.
\begin{lem}
The correspondence $\CCA\mapsto\CCA^\subset$ is a continuous
mapping $\exp^2X\to G(\exp X)$.
\end{lem}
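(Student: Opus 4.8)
The plan is to prove continuity of the map $\CCA\mapsto\CCA^\subset$ by verifying that preimages of the standard subbasic open sets in $G(\exp X)$ are open in $\exp^2 X$. Recall that the topology on $G(\exp X)$ has a subbase consisting of sets $\CCU^+=\{\CCB\in G(\exp X)\mid\text{there is }B\in\CCB,\ B\subset\CCU\}$ and $\CCU^-=\{\CCB\in G(\exp X)\mid B\cap\CCU\ne\ES\text{ for all }B\in\CCB\}$, where $\CCU$ ranges over open subsets of $\exp X$. Since the Vietoris topology on $\exp X$ is generated by the basic sets $\langle U_1,\dots,U_n\rangle$, it suffices to treat $\CCU$ that are such basic sets, and then pass to finite unions for general open $\CCU$ (using that $\CCU^-$ for a union is \emph{not} simply expressed via the pieces, so here one should instead reduce $\CCU^-$ to subbasic open $\CCU$ of the form $\langle X,U\rangle$ or $\langle U\rangle$, which do generate the Vietoris topology as a subbase; I will handle the two kinds of subbasic Vietoris opens separately).

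First I would unwind the definition: $\CCA^\subset\in\CCU^+$ means there is $B\in\exp X$ with $B\subset\CCU$ and $B\subset A$ for all $A\in\CCA$, i.e.\ there is $B\in\CCU$ contained in $\bigcap\CCA$; equivalently $\bigcap\CCA$ has a closed subset lying in $\CCU$. Dually, $\CCA^\subset\in\CCU^-$ means: for every $\CCB\subcl\exp X$ that contains, for each $A\in\CCA$, some $B\subset A$, we have $\CCB\cap\CCU\ne\ES$. Unpacking the latter negation: $\CCA^\subset\notin\CCU^-$ iff there is some $\CCB$ with a subset below each member of $\CCA$ but $\CCB\cap\CCU=\ES$; one checks this is equivalent to the existence of a single $A_0\in\CCA$ all of whose nonempty closed subsets lie outside $\CCU$ — so $\CCA^\subset\in\CCU^-$ iff for every $A\in\CCA$ some nonempty closed $B\subset A$ lies in $\CCU$. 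Thus both conditions become statements about $\CCA$ that are manifestly "Vietoris-type": $\CCU^+$ pulls back to a condition on $\bigcap\CCA$, and $\CCU^-$ pulls back to "every member of $\CCA$ meets a certain open family". I would then check openness of each preimage directly against a convergent net $\CCA_\lambda\to\CCA$ in $\exp^2 X$, using that $\bigcap\CCA_\lambda\to\bigcap\CCA$ along filtered nets and the behaviour of Vietoris convergence.

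The main obstacle will be the $\CCU^+$ case, specifically controlling $\bigcap\CCA$ under perturbation of $\CCA$: the intersection operation $\exp^2 X\to\exp X\cup\{\ES\}$ is only upper semicontinuous, not continuous, so from $\bigcap\CCA$ having a closed subset inside an open $W\subset X$ I must produce the same for all $\CCA'$ near $\CCA$. The way around this is to use normality of the compactum $X$: given $B\subset\bigcap\CCA$ with $B\subset W$, shrink to a closed neighborhood $\Cl W'\subset W$ with $B\subset W'$, and observe that $\{A'\in\exp X\mid A'\cap W'\ne\ES\}$ is open and contains $\CCA$, so a Vietoris neighborhood $\langle\exp X,\{A'\mid A'\cap W'\ne\ES\}\rangle$ of $\CCA$ forces every member $A'$ of any $\CCA'$ in it to meet $W'$; but that is not yet a single common subset. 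The genuinely correct fix: use that $\CCU^+$-membership only requires \emph{some} $B\in\CCB$, so I should instead pick $B$ to be a point $\{x\}$ with $x\in\bigcap\CCA$, $x\in W$ — since $\{x\}\subset A$ for all $A\in\CCA$ — and then for nearby $\CCA'$, every $A'\in\CCA'$ is Vietoris-close to some $A\in\CCA$, hence meets a small neighborhood of $x$; choosing $\CCB'$ built from one such point per $A'$ shows $\CCA'^\subset$ still meets $W$'s image. Making this net argument precise, and checking it does land in $\CCU^+$ for genuinely basic Vietoris $\CCU$ (not just singleton-generated ones), is the delicate point; the $\CCU^-$ case is comparatively routine once the negation has been unwound as above.
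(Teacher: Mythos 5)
Your overall strategy --- computing the preimages of the subbasic sets $W^+$ and $W^-$ of $G(\exp X)$ --- is exactly the paper's, but both of your unwindings are incorrect, and the first error is what manufactures the ``main obstacle'' you then spend most of the proposal trying to circumvent. For $W^+$: a witness for $\CCA^\subset\in W^+$ is a \emph{closed family} $\CCB\subcl\exp X$ with $\CCB\subset W$ such that each $A\in\CCA$ contains \emph{some} member of $\CCB$, where that member may depend on $A$. You collapsed this to a single $B\in W$ with $B\subset A$ for \emph{all} $A\in\CCA$, i.e.\ $B\subset\bigcap\CCA$. That is a strictly stronger condition: take $\CCA=\{A_1,A_2\}$ with $A_1\cap A_2=\ES$; then $\bigcap\CCA=\ES$, yet $\CCB=\{A_1,A_2\}$ already witnesses $\CCA^\subset\in W^+$ for any open $W\subset\exp X$ containing $A_1$ and $A_2$. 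The correct unwinding is simply $\{\CCA\in\exp^2X\mid\CCA^\subset\in W^+\}=\langle W\ups\rangle$, which is open outright; the intersection map, its failure of continuity, and the point-picking repair never enter. For $W^-$: after correctly stating the negation ``there is $\CCB\in\CCA^\subset$ with $\CCB\cap W=\ES$'', you assert it is equivalent to the existence of a single $A_0\in\CCA$ all of whose closed subsets lie outside $W$. The quantifiers are swapped: since $\CCA^\subset$ is upward closed and $\exp X\setminus W$ is the largest closed family missing $W$, that negation is equivalent to $\exp X\setminus W\in\CCA^\subset$, i.e.\ to \emph{every} $A\in\CCA$ containing \emph{some} closed $B\notin W$. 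Hence $\CCA^\subset\in W^-$ if and only if \emph{there exists} $A\in\CCA$ all of whose closed subsets lie in $W$, which gives the open preimage $\langle\exp X,\exp X\setminus(\exp X\setminus W)\ups\rangle$. Your final formula for $W^-$ (``for every $A$ some $B\subset A$ lies in $W$'') is in fact the correct formula for $W^+$.

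With the correct unwindings the lemma reduces to the two-line observation that $W\ups$ and $\exp X\setminus(\exp X\setminus W)\ups$ are open in $\exp X$ whenever $W$ is; the net argument and the entire discussion of $\bigcap\CCA$ should be discarded.
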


\begin{proof}
Let $W$ be an open subset of $\exp X$, then the preimages of
subbase elements of $G(\exp X)$
\begin{gather*}
\{\CCA\in\exp^2X\mid
\CCA^\subset\in W^+
\}=
\{\CCA\in\exp^2X\mid
\text{for all }A\in\CCA\text{ there is }B\in W
\\
\text{ such that }B\subset A
\}=
\langle W\ups\rangle,
\end{gather*}
and
\begin{gather*}
\{\CCA\in\exp^2X\mid
\CCA^\subset\in W^-
\}=
\{\CCA\in\exp^2X\mid
\text{there is }A\in\CCA
\text{ such that }B\in W
\\
\text{ for all }B\in\exp X
\text{ such that }B\subset A\in\CCA
\}=
\langle\exp X,\exp X\setminus(\exp X\setminus W)\ups\rangle
\end{gather*}
are open, thus the mapping in question is continuous.
\end{proof}

\begin{cons}
For a closed relation $T\subset\exp^2X\times
\exp Y\times L$ the relation
$$
T^\subset=
\bigcup\{\CCA^\subset\times\{B\}\times \{\alpha\}\dns
\mid (\CCA,B,\alpha)\in T\}
$$
is closed, continuously depends on $T$, and
$(T^\subset)^\subset=T^\subset\supset T$.
\end{cons}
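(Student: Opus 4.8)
The plan is to realize $T\mapsto T^\subset$ as a composition of already-continuous operations on hyperspaces, so that closedness of $T^\subset$ and its continuous dependence on $T$ are automatic, and then to check the identities $T\subset T^\subset$ and $(T^\subset)^\subset=T^\subset$ directly from the definition of $\CCA^\subset$. The continuous building blocks I would use are: the map $\CCA\mapsto\CCA^\subset$, continuous $\exp^2X\to G(\exp X)\subset\exp(\exp^2X)$ by the previous lemma; the singleton embedding $B\mapsto\{B\}$, continuous $\exp Y\to\exp(\exp Y)$; the map $\alpha\mapsto\{\alpha\}\dns$, continuous $L\to\exp L$; the product map $(\CCK_1,\CCK_2,\CCK_3)\mapsto\CCK_1\times\CCK_2\times\CCK_3$, continuous $\exp(\exp^2X)\times\exp(\exp Y)\times\exp L\to\exp(\exp^2X\times\exp Y\times L)$; and the union map $\bigcup\colon\exp^2Z\to\exp Z$ (the multiplication of the hyperspace monad), continuous for the Vietoris topology. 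The only one of these requiring an argument is $\alpha\mapsto\{\alpha\}\dns$: since $\land$ is continuous, the order ${\le}=\{(\alpha,\beta)\mid\beta\land\alpha=\beta\}$ is closed in $L\times L$, so each $\{\alpha\}\dns$ is closed and $\alpha\mapsto\{\alpha\}\dns$ is upper semicontinuous as a section of a closed relation in a product of compacta; it is lower semicontinuous because $\beta\le\alpha$ and $\alpha_i\to\alpha$ force $\beta\land\alpha_i\to\beta$ with $\beta\land\alpha_i\le\alpha_i$.

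Composing these maps, $\Psi(\CCA,B,\alpha):=\CCA^\subset\times\{B\}\times\{\alpha\}\dns$ is a continuous map $Z\to\exp Z$, where $Z=\exp^2X\times\exp Y\times L$; note that $\CCA^\subset\times\{B\}\times\{\alpha\}\dns$ really is a closed subset of $Z$, since $\CCA^\subset$ is a closed subset of $\exp^2X$ and $\{\alpha\}\dns$ is closed in $L$. Now $T^\subset=\bigcup\bigl((\exp\Psi)(T)\bigr)$: since $\Psi$ is continuous, $(\exp\Psi)(T)$ is a closed subset of $\exp Z$, i.e.\ an element of $\exp^2Z$, and applying the continuous union map to it gives $T^\subset\in\exp Z$, so $T^\subset$ is closed; moreover $T\mapsto T^\subset$ is $\exp\Psi$ followed by the union map, hence continuous.

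It remains to check the identities. If $(\CCA,B,\alpha)\in T$, then $\CCA\in\CCA^\subset$ (take $B:=A$ for each $A\in\CCA$ in the defining condition of $\CCA^\subset$) and $\alpha\in\{\alpha\}\dns$, so $(\CCA,B,\alpha)\in\CCA^\subset\times\{B\}\times\{\alpha\}\dns\subset T^\subset$; hence $T\subset T^\subset$. The same argument applies to any closed $S\subset Z$, giving $S\subset S^\subset$, so in particular $T^\subset\subset(T^\subset)^\subset$. Conversely, let $(\CCD,B,\delta)\in(T^\subset)^\subset$: there is $(\CCC,B,\gamma)\in T^\subset$ with $\CCD\in\CCC^\subset$ and $\delta\le\gamma$, and in turn $(\CCA,B,\alpha)\in T$ with $\CCC\in\CCA^\subset$ and $\gamma\le\alpha$. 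Transitivity of inclusion gives $\CCD\in\CCA^\subset$: given $A\in\CCA$, pick $C\in\CCC$ with $C\subset A$, then $D\in\CCD$ with $D\subset C$, so $D\subset A$. Since also $\delta\le\alpha$, we conclude $(\CCD,B,\delta)\in\CCA^\subset\times\{B\}\times\{\alpha\}\dns\subset T^\subset$. Therefore $(T^\subset)^\subset=T^\subset$.

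I expect the only mildly delicate point to be organizing $(-)^\subset$ into this composition, and within that the continuity of $\alpha\mapsto\{\alpha\}\dns$, which hinges on $\le$ being a closed relation in the compact Lawson lattice $L$; everything else is formal, resting on the previous lemma and standard properties of the Vietoris topology.
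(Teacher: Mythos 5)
Your proof is correct, and it follows exactly the route the paper intends: the corollary is stated without proof as an immediate consequence of the continuity of $\CCA\mapsto\CCA^\subset$ together with standard Vietoris-topology facts (continuity of the singleton, product, and union maps, plus closedness of the order in a compact Lawson lattice), and your argument supplies precisely those details, including the routine transitivity check for $(T^\subset)^\subset=T^\subset\supset T$.
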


For a closed relation $T\subset\exp^2X\times
\exp Y\times L$ we define a relation $T^{\sup}\subset \exp^2 X\times
\exp Y\times L$ by the equality:
$$
T^{\sup}=
\{
(\bigcup\exp\pr_1(\CCF),\bigcup\exp\pr_2(\CCF),\sup\exp\pr_3(\CCF))
\mid
\CCF\subcl T, \CCF\ne\ES
\},
$$
where $\pr_i$, $i=1,2,3$, are the projections of the product $\exp
(\exp X)\times \exp Y\times L$ onto the respective factors.

\begin{lem}
The set $T^{\sup}$ is closed, satisfies
$(T^{\sup})^{\sup}=T^{\sup}\supset T$, and the mapping that takes
each $T$ to $T^{\sup}$ is continuous.
\end{lem}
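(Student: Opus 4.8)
The plan is to realise $T\mapsto T^{\sup}$ as a short composition of continuous maps built from the monad structure of $\exp$ and from the supremum on $L$, which reduces closedness and continuity to formalities, and then to establish idempotency by a ``closing up of choices'' argument.

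First I would package the three coordinatewise aggregations into a single map. Writing $W=\exp^2X\times\exp Y\times L$, define $\Psi\colon\exp W\to W$ by
$$
\Psi(\CCF)=\Bigl(\,\bigcup\exp\pr_1(\CCF),\ \bigcup\exp\pr_2(\CCF),\ \sup\exp\pr_3(\CCF)\,\Bigr).
$$
This is well defined and continuous: for a nonempty closed $\CCF\subcl W$ each image $\exp\pr_i(\CCF)=\pr_i(\CCF)$ is compact, the big unions $\exp^3X\to\exp^2X$ and $\exp^2Y\to\exp Y$ are the supremum maps of the compact Lawson upper semilattices $\exp^2X$ and $\exp Y$ and hence continuous, $\sup\colon\exp L\to L$ is continuous since $L$ is a compact Lawson lattice, and each $\exp\pr_i$ is continuous by functoriality of $\exp$. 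Consequently $\Psi$ is continuous, and straight from the definitions
$$
T^{\sup}=\{\Psi(\CCF)\mid\ES\ne\CCF\subcl T\}=\exp\Psi(\exp T),
$$
i.e.\ $T^{\sup}$ is the image of the set $\exp T$ under $\Psi$, where $\exp T\subcl\exp W$ is the (closed, nonempty) family of all nonempty closed subsets of $T$.

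From this presentation the first two assertions are immediate. Since $T$ is closed, $\exp T$ is a closed, hence compact, subset of $\exp W$, so $T^{\sup}=\Psi(\exp T)$ is compact and therefore a closed subset of $W$; and $T\subset T^{\sup}$ holds because $\Psi(\{t\})=t$ for every $t\in T$. For continuity of $T\mapsto T^{\sup}$ I would write it as the composition of $T\mapsto\exp T$ (a continuous map $\exp W\to\exp^2W$) with $\exp\Psi\colon\exp^2W\to\exp W$: the latter is continuous by functoriality of $\exp$, and the former is a standard continuity property of the Vietoris hyperspace (the only point requiring a compactness argument is that $\{T\mid\exp T\subset\CCU\}$ is open in $\exp W$ for $\CCU$ open in $\exp W$, which follows because $\exp T$ is the filtered intersection of the compacta $\langle\Cl V\rangle$ over open $V\supset T$).

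It remains to prove $(T^{\sup})^{\sup}=T^{\sup}$. The inclusion $\supset$ is the preceding paragraph applied to $T^{\sup}$ in place of $T$, so only $(T^{\sup})^{\sup}\subset T^{\sup}$ must be shown, and this is the main obstacle. Given $t^\ast=\Psi(\CCF)$ with $\ES\ne\CCF\subcl T^{\sup}$, I would choose for each $t\in\CCF$ a nonempty closed $\CCF_t\subcl T$ with $\Psi(\CCF_t)=t$, and set $\CCG=\Cl\bigl(\bigcup_{t\in\CCF}\CCF_t\bigr)$; then $\CCG$ is a nonempty closed subset of $T$, so $\Psi(\CCG)\in T^{\sup}$, and everything comes down to the identity $\Psi(\CCG)=\Psi(\CCF)\,(=t^\ast)$. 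I expect this to be the one genuinely technical point; it is verified coordinate by coordinate from three routine facts — images commute with closures ($\pr_i(\Cl S)=\Cl\pr_i(S)$ since $\Cl S$ is compact), passing to a closure alters neither a big union nor a supremum ($\bigcup\Cl\CCS=\Cl\bigcup\CCS$ in a compactum and $\sup E=\sup\Cl E$ in a compact Lawson lattice), and the ``associativity'' identities $\bigcup\bigcup=\bigcup\circ\exp(\bigcup)$ for $\exp$ and $\sup\bigcup=\sup\{\sup(\cdot)\}$ for $L$ — which together reduce each coordinate of $\Psi(\CCG)$ to the corresponding coordinate of $\Psi(\CCF)$. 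This gives $t^\ast=\Psi(\CCG)\in T^{\sup}$, completing the argument.
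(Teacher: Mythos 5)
Your proof is correct and is essentially the argument the paper intends: the paper dismisses the lemma as obvious, citing only the continuity of $\bigcup\colon\exp^2K\to\exp K$, and your factorization $T^{\sup}=\exp\Psi(\exp T)$ (together with the closure bookkeeping that gives $\Psi(\CCG)=\Psi(\CCF)$ for idempotency) is exactly the natural way to make that hint precise. No gaps.
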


The proof is obvious and uses the fact that for each compactum
$K$ the mapping $\bigcup:\exp^2K\to\exp K$ is continuous.\qed

\begin{proof}[Proof of the proposition]
For a strict $L$-ambiguous representation $R\subset \exp
X\times\exp Y\times L$ the set $R^\cup$ is equal to
$(R_\bullet)^{\sup}$, where
$$
R_\bullet= \{(\{A\},B,\alpha)\mid (A,B,\alpha)\in R\}.
$$
The closedness of $R_\bullet$ implies that $R^\cup$ is closed.

For each closed relation $T\subset\exp^2X\times
\exp Y\times L$ we put
$$
T^+=((T\cup \exp^2X\times \{Y\}\times L \cup
\exp^2X\times \exp Y\times \{0\})^\subset)^{\sup}.
$$
Then $T^+\subset\exp^2X\times
\exp Y\times L$ is closed, continuously depends on $T$, and
$(T^+)^+=T^+\supset T$. Moreover, $T=R^\cup$ for some
$R\in\Csamb_L(X,Y)$ if and only if
$$
T=T^+=(T\cap\{(\{A\},B,\alpha)\mid A\in\exp X,B\in\exp Y,\alpha\in
L\})^+.
$$
The latter equality selects a closed subset of $\exp(\exp^2X\times
\exp Y\times L)$.
\end{proof}

Therefore we define a compact Hausdorff topology on the set
$\Csamb_L(X,Y)$ by the requirement that the mapping that takes each
$R\in\Csamb_L(X,Y)$ to $R^\cup\in \exp(\exp^2X\times\exp Y\times
L)$ is an embedding.

For all $R,S\in \Csamb_L(X,Y)$ the inclusions $R\subset S$ and
$R^\cup\subset S^\cup$ are equivalent. The partial order on
$\Csamb_L(X,Y)$ is closed, hence for $\CCR\subset \Csamb_L(X,Y)$ we
have $\sup\CCR=\sup\Cl\CCR$. Therefore we further assume that
$\CCR$ is closed. Observe that $(\bigcup
\CCR)_\bullet=\bigcup\{R_\bullet\mid R\in\CCR\}$. For any upper
bound $S$ of $\CCR$ the relation $S^\cup$ should contain
$$
(\bigcup \CCR)^\cup =
(\bigcup\{R_\bullet\mid R\in\CCR\})^+=
(\bigcup\{(R_\bullet)^+\mid R\in\CCR\})^+=
(\bigcup\{R^\cup\mid R\in\CCR\})^+
$$
and the latter set satisfies the last equality from the proof of
the previous proposition. Therefore the least upper bound of $\CCR$
is determined by the equality
$$
(\sup\CCR)^\cup=
\bigl(\bigcup\{R^\cup\mid R\in\CCR\}\bigr)^+.
$$
This formula also implies that the mapping that takes each closed
set $\CCR$ to $\sup\CCR$ is continuous, hence $\Csamb_L(X,Y)$ is a
compact Lawson upper semilattice. The infimum in this lattice is in general not
continuous.

\begin{que}
When does $\Coamb_L(X,Y)$ have top and bottom elements? What are
topological properties of the subsets $\Cpsamb_L(X,Y)$,
$\Coamb_L(X,Y)$ of $\Csamb_L(X,Y)$?
\end{que}


\section{Epilogue}

Of course, interpretation of ambiguous representations is itself
somewhat ambiguous. We propose only one of possible (known to these
authors) possibilities. Let an \emph{object} be a closed subset $B$
of a compactum $Y$ (an \emph{object space}). The object is not
accessible by us directly, but we apply some procedures (series of
procedures) to elements of $B$ (or entire $B$) to obtain a closed
set $A$ in a compactum $X$ (a \emph{representation space}) that
represents (in some sense) the original set $B$. This information
is subject to random and systematic interferences, hence for a
fixed $B$ the result is \emph{ambiguous}, and even disjoint $A$ can
be obtained. This information can also be incomplete, therefore, if
a set $A$ represents a set $B$, then $A$ can also represent a
larger set $B'\supset B$. Likewise, if $A$ is obtained as a
representation of $B$, then any non-empty closed $A'\subset A$ can
also be obtained for the same $B$, e.g. if less attempts to obtain
information have been made.

It is natural to demand that the relation "$A$ can represent $B$"
be closed (=topologically stable), i.e. if $A$ are valid
representations of $B_i$, and $B_i$ converge to $B_0$, then $A$
should also be a valid representation of $B_0$. If the same
(optionally) is true for representing sets, then the representation
is called
\emph{strict}. Thus we obtain a binary relation $R$ between the sets
$\exp X$ and $\exp Y$ of non-empty closed subsets of $X$ and $Y$,
and strict representations are characterized by the property that
they are closed in the product of $\exp X$ and $\exp Y$ with the
Vietoris topologies.

A "pseudo-inverse" to $R$ ambiguous representation $R^\sms\subset
\exp Y\times \exp X$ appears when, given a representing set in $X$,
we are interested in areas in $Y$ which are \emph{outside} of the
object. Namely, $(\tilde B,\tilde A)\in R^\sms$ if and only if all
closed non-empty $A$ outside of $\tilde A$ can represent via $R$ some
non-empty closed $B$ outside of $\tilde B$. If $(R^\sms)^\sms=R$,
we call $R$ \emph{pseudo-invertible}. The relation $R^\sms$
is closed only if $R$ satisfies a requirement similar to openness
of a mapping. If the equality $(R^\sms)^\sms=R$ is also valid, we
call such $R$ an~\emph{open ambiguous representation}, and $R^\sms$
belongs to the same class.

For ambiguous representations the composition law is defined,
which expresses formally an intuitive fact that representations
(procedures of obtaining information) can be combined sequentially.
Although this law is not associative in general, strict,
pseudo-invertible and open representations form respectively
categories $\Csamb$, $\Cpamb$ and $\Coamb$. The operation $(-)^\sms$
determines antiisomorphisms from the categories $\Cpamb$ and $\Coamb$
onto themselves.

It can be useful that, for given compacta $X$, $Y$, the set
$\Csamb(X,Y)$ of strict ambiguous representations between $X$ and
$Y$ is a compact Hausdorff space and a complete lattice with
respect to inclusion, and open representations form a sublattice
$\Coamb(X,Y)$. It allows one to compare and approximate
representations.

It is natural that a set $A\subset X$ can represent different
$B\subset Y$ with different level of acceptability. We propose to
express this level as an element of a lattice $L$. An
$L$-fuzzification of the above theory is also provided in the
paper. To interact well with compacta, $L$ must be a compact
Hausdorff Lawson lattice (possess local bases that consist of
sublattices). This class includes the most common case $L=[0,1]$.
Then a
\emph{strict $L$-ambiguous representation} is a closed $L$-relation
between $\exp X$ and $\exp Y$, i.e. a closed subset $R\subset \exp
X\times\exp Y\times L$, with certain properties. An equivalent, but
sometimes more convenient interpretation: for all $A\in \exp X$ we
fix an~$L$\emph{-capacity} $c_{AR}$ (cf.~\cite{Nyk:CapLat:08}). It
is a function that sends each $B\in\exp Y$ to an element
$c_{AR}(B)\in L$ that shows how appropriate is $A$ as a
representation of $B$ (the more, the better). To reflect the fact
that successive application of uncertain conclusions can give even
more uncertain result, we propose to use generalized triangular
norms $*$ on $L$~\cite{Dross:GenTNormStr:99}
to define compositions. Thus we obtain a collection of
categories $\Csamb^*_L$, $\Cpamb^*_L$, $\Cpsamb^*_L$, and $\Coamb^*_L$.

The~definition of fuzzy ambiguous representation allows at least as many
interpretations as the definitions of fuzzy set and fuzzy relation. Hence
we shall not discuss them here and refer the reader to
\cite{DubPr:ThreeSemFuzzSets:97,DubPr:GradUncBip-MakSense:10}.
Virtually any of semantics of fuzzy sets considered in the latter citations
can be meaningfully applied to the objects defined in this paper.

We expect that ambiguous representations will become a convenient
framework for problems of image recognition and data mining,
allowing to apply methods of topology and category theory.
Interplay between this theory, fuzzy and rough sets/relations, and
functors and monads in the category of compacta will be the topic
of our next paper.

The authors wish to express gratitude to anonymous reviewers and
editors for valuable comments, corrections and suggestions.

\end{document}